\definecolor{red}{rgb}{1,0,0}
\newtheorem{thm}[subsection]{Theorem}
\newtheorem{defn}[subsection]{Definition}
\newtheorem{prop}[subsection]{Proposition}
\newtheorem{lemma}[subsection]{Lemma}
\theoremstyle{definition}  
\newtheorem{ex}[subsection]{Example}
\newtheorem{remark}[subsection]{Remark}
\newcommand{\dfn}{\textbf} 
\newcommand{\mdfn}[1]{\dfn{\mathversion{bold}#1}} 
\newcommand{\Smash}             {\wedge}
\newcommand{\tens}              {\otimes}               
\newcommand{\ctens}             {\,\hat{\otimes}\,}               
\newcommand{\iso}               {\cong}
\newcommand{\cat}{\EuScript}    
\newcommand{\cA}{{\cat A}}      
\newcommand{\cC}{{\cat C}}
\newcommand{\cD}{{\cat D}}
\newcommand{\cE}{{\cat E}}
\newcommand{\cU}{{\cat U}}
\newcommand{\Spe}{{\cat Sp}}
\newcommand{\MotSp}{{\cat MotSp}}
\DeclareMathOperator{\tr}{tr}
\newcommand{\Ho}{\text{Ho}\,}
\newcommand{\field}[1]  {\mathbb #1} 
\newcommand{\A}         {\field A}
\newcommand{\N}         {\field N}
\newcommand{\Z}         {\field Z}
\newcommand{\C}         {\field C}
\DeclareMathOperator{\Hom}{Hom}
\DeclareMathOperator{\End}{End}
\DeclareMathOperator{\ob}{ob}
\DeclareMathOperator{\id}{id}
\DeclareMathOperator{\Aut}{Aut}
\newcommand{\ra}{\rightarrow}                   
\newcommand{\lra}{\longrightarrow}              
\newcommand{\la}{\leftarrow}                    
\newcommand{\llra}[1]{\stackrel{#1}{\lra}}      
\newcommand{\blank}{-}                          
\newcommand{\und}{\underline}
\newcommand{\rea}[1]{|{#1}|}             
\newcommand{\ceck}[1]{\Cech(#1)}         
\newcommand{\oceck}[1]{\Cech^{o}(#1)}    
\newcommand{\oreal}[1]{\rea{\oceck{U}}}  
\newcommand{\creal}[1]{\rea{\ceck{U}}}   
\newcommand{\Cech}{\check{C}}
\newcommand{\power}[2]{\und{#1}^{#2}}
\newcommand{\psub}[2]{\phantom{\!}_{#1}{#2}}
\numberwithin{equation}{subsection}
\newenvironment{myequation}
  {\addtocounter{subsection}{1}\begin{eqnarray}}
  {\end{eqnarray}$\!\!$}
\DeclareMathOperator{\Pic}{Pic}
\DeclareMathOperator{\KL}{KL}
\newcommand{\um}[2]{\und{#1}^{{#2}}}
\newcommand{\GrVect}{GrVect}
\newcommand{\ab}[1]{{\langle{#1}\rangle}}
\DeclareMathOperator{\AltBilin}{AltBilin}
\begin{document}

\title{Coherence for invertible objects and 
multi-graded homotopy rings}

\author{Daniel Dugger}

\address{Department of Mathematics\\ University of Oregon\\ Eugene, OR
97403}

\email{ddugger@math.uoregon.edu}

\begin{abstract}
We prove a coherence theorem for invertible objects in a
symmetric monoidal category.  This is used to deduce associativity,
skew-commutativity, and related results for multi-graded morphism
rings, generalizing the well-known versions for stable homotopy groups.
\end{abstract}

\maketitle

\tableofcontents

\section{Introduction}
\label{se:intro}

In algebraic topology a classical object of study is the stable
homotopy ring $\pi_*(S)$, which is $\Z$-graded and graded-commutative.
For any topological space (or spectrum) $X$ the stable homotopy groups
$\pi_*(X)$ give a bimodule over $\pi_*(S)$.  The motivation for the
present paper comes from wanting to generalize this basic setup to
more sophisticated homotopy theories, where the homotopy rings and
modules have a more elaborate grading.  Standard examples are the
categories of $G$-equivariant spectra and the category of motivic
spectra.  In these settings it has long been realized that it can be
advantageous to use a grading by an index having to do with the
invertible objects, rather than a grading by integers (which
correspond to integral suspensions/desuspensions of the unit object).
This paper deals with some fundamental questions that arise in this
general situation.

As we explain below, for general grading schema one must take some
care over whether the analog of $\pi_*(S)$ is indeed
associative, and whether the analogs of $\pi_*(X)$ are indeed
bimodules.  Care is also needed in the treatment of
graded-commutativity.  At an even deeper level than these issues, the
ring structure on $\pi_*(S)$ is not exactly canonical---different
choices in the basic setup can result in different isomorphism classes of rings.
We approach these issues by proving a general
coherence theorem for invertible objects in a symmetric monoidal
category; this is the main result of the paper, and is spread across
Theorems~\ref{th:coherence}, \ref{th:coherence2},
\ref{th:multi-coherence1}, and \ref{th:multi-coherence2} below.
After establishing the coherence result we deduce the basic facts
about $\Z^n$-graded homotopy rings as consequences.  

\subsection{Introduction to the problem}

Let $(\cC,\tens)$ be a symmetric monoidal category, and let $S$ denote
the unit.  
Given objects $x_1,\ldots,x_n$ in $\cC$, we write $x_1\tens
\cdots \tens x_n$ as an abbreviation for
\[ x_1 \tens (x_2 \tens (x_3 \tens \cdots (x_{n-1}\tens x_n))).
\] 
We also use $x^k$ as an abbreviation for $x\tens x \tens \cdots \tens
x$ ($k$ factors).  So note that $x^2\tens y^3\tens z$ is an abbreviation for
$(x\tens x) \tens ((y \tens (y\tens y)) \tens z)$,
and that by convention $x^0=S$.  
Finally, for each tuple
$a=(a_1,\ldots,a_n)\in \N^n$, write
\[ \power{x}{a}=x_1^{a_1}\tens \cdots \tens x_n^{a_n}.\]

An object $X$ of $\cC$ is called \dfn{invertible} if there is an
object $Y$ and an isomorphism $\alpha\colon S\ra Y\tens X$.  We will
say that $(Y,\alpha)$ is an inverse for $X$.  In this situation there
turns out to be a unique map $\hat{\alpha}\colon X\tens Y\ra S$ such
that the two evident maps from $(X\tens Y)\tens X$ to $X$ are the
same, and this $\hat{\alpha}$ is an isomorphism (see
Proposition~\ref{pr:alpha-hat} below).  If $a\in \Z$ define
\[ X^a = \begin{cases}
X^a \ (\text{as already defined}) & \text{if $a\geq0$}, \\
Y^{-a}  & \text{if $a<0$.} \\
\end{cases}
\]
Note that given an invertible object $X$, the isomorphism type of $Y$
is uniquely determined; but given a specific choice of $Y$, the map
$\alpha$ is {\it not\/} uniquely determined---it can be varied by an
arbitrary element
of $\Aut(S)$.

Let $X_1,\ldots,X_n$ be a collection of invertible objects in $\cC$,
with inverses $(Y_1,\alpha_1),\ldots,(Y_n,\alpha_n)$.  For $a\in \Z^n$
define
\[ \und{X}^{a}=X_1^{a_1} \tens \cdots \tens X_n^{a_n}.
\]

Assume now that $\cC$ is an additive category and that the tensor
product is an additive functor in each variable.  Let $\pi_*(S)$ be the
$\Z^n$-graded abelian group given by $\pi_{a}(S)=\cC(\um{X}{a},S)$.  
More generally, if $W$ is a fixed object in $\cC$ let $\pi_*(W)$ be the
$\Z^n$-graded abelian group given by
$\pi_{a}(W)=\cC(\um{X}{a},W)$. 
 One of the goals of this paper is the following:

\begin{prop}\mbox{}\par
\label{pr:main1}
\begin{enumerate}[(1)]
\item $\pi_*(S)$ is a $\Z^n$-graded ring, 
\item $\pi_*(W)$ is a $\Z^n$-graded bimodule over $\pi_*(S)$, and
\item There exist elements $\tau_1,\ldots,\tau_n\in
\pi_{(0,\ldots,0)}(S)$ satisfying $\tau_i^2=1$  
such that for all $f\in \pi_a(S)$ and $g\in \pi_b(S)$, where $a,b\in
\Z^n$, one has 
\[ fg=gf\cdot \bigl [ \tau_1^{(a_1b_1)}\cdots \tau_n^{(a_nb_n)}\bigr
].
\]
In fact, $\tau_i$ is just the trace of the identity map on $X_i$ (see
Section~\ref{se:KL} for the definition of trace).  
\end{enumerate}
\end{prop}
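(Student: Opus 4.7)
The plan is to deduce all three statements as consequences of the coherence theorems (Theorems~\ref{th:coherence}--\ref{th:multi-coherence2}). Those theorems supply, for each $a,b\in\Z^n$, a distinguished isomorphism $\mu_{a,b}\colon \und{X}^{a+b}\to\und{X}^{a}\tens\und{X}^{b}$ that is compatible with associativity. Given $f\in\pi_a(S)$ and $g\in\pi_b(S)$, define
\[
f\cdot g \;=\; \Bigl[\,\und{X}^{a+b}\xrightarrow{\mu_{a,b}}\und{X}^a\tens\und{X}^b\xrightarrow{f\tens g}S\tens S\xrightarrow{\cong}S\,\Bigr],
\]
and define the $\pi_*(S)$-actions on $\pi_*(W)$ by the same recipe with one target $S$ replaced by $W$. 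Bilinearity is immediate from the hypothesis that $\tens$ is additive in each variable.

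For associativity in (1) and the corresponding bimodule identities in (2), both $(fg)h$ and $f(gh)$ (and the analogous identities involving $W$) unwind to composites of the form
\[
\und{X}^{a+b+c}\xrightarrow{\;\cong\;}\und{X}^{a}\tens\und{X}^{b}\tens\und{X}^{c}\xrightarrow{f\tens g\tens h}S,
\]
in which the first arrow is built from two of the $\mu_{*,*}$'s. Coherence asserts that any two such composites agree, which is precisely the identity required. Unitality follows by taking one input to be $1=\id_S\in\pi_0(S)$.

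Part (3) is the substantive piece. Naturality of the symmetry $\sigma$ together with triviality of the symmetry on the unit give $g\tens f = (f\tens g)\circ\sigma_{\und{X}^b,\und{X}^a}$, so $gf$ equals $fg$ precomposed with the endomorphism $\lambda = \mu_{a,b}^{-1}\circ\sigma_{\und{X}^b,\und{X}^a}\circ\mu_{b,a}$ of $\und{X}^{a+b}$. Since $\und{X}^{a+b}$ is invertible, $\Aut(\und{X}^{a+b})$ is canonically isomorphic to $\Aut(S)=\pi_0(S)$, so $\lambda$ corresponds to a scalar there. The task is to identify this scalar with $\tau_1^{a_1b_1}\cdots\tau_n^{a_nb_n}$.

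The main computational obstacle is pinning down $\lambda$. By coherence, $\lambda$ is the permutation endomorphism that, for each index $i$, transposes the block of $a_i$ copies of $X_i$ with the block of $b_i$ copies of $X_i$ inside $X_i^{a_i+b_i}$; mixed crossings of $X_i$ past $X_j$ with $i\neq j$ are absorbed into the coherence data and contribute trivially. Each block transposition inside $X_i^{a_i+b_i}$ is a product of $a_ib_i$ adjacent self-twists, and the self-twist $\sigma\colon X_i\tens X_i\to X_i\tens X_i$ corresponds to $\tau_i=\tr(\id_{X_i})\in\Aut(S)$---this is a defining property of the trace of an invertible object, developed in Section~\ref{se:KL}. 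Handling negative exponents requires the identification $\tr(\id_{Y_i})=\tr(\id_{X_i})$, which follows from standard duality properties of trace in a symmetric monoidal category; one then verifies $\tau_i^2=1$ from the fact that $\sigma_{X_i,X_i}$ is an involution. Assembling these per-index contributions yields the stated formula.
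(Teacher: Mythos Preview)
Your outline for (1) and (2) matches the paper exactly (this is Proposition~\ref{pr:Z-ring}), and your skeleton for (3)---reduce $gf$ to $fg$ composed with a permutation endomorphism, then identify the resulting scalar as a product of basic commuters---is precisely the route taken in Propositions~\ref{pr:skew} and~\ref{pr:tau-ab}.

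The gap is in your justification of the two key facts about $\tau_i=\tr(\id_{X_i})$. You write that the self-twist $t_{X_i,X_i}$ corresponding to $\tr(\id_{X_i})$ under $\Aut(X_i\tens X_i)\cong\Aut(S)$ is ``a defining property of the trace of an invertible object.'' It is not: the trace is defined in Section~\ref{se:KL} via the duality composite $S\to X^*\tens X\to X\tens X^*\to S$, and the identity $D(t_{X,X})=\tr(\id_X)$ is a theorem (Proposition~\ref{pr:tr=D}). More seriously, the paper's proof of that theorem \emph{uses} the fact $\tr(\id_X)^2=1$ (Proposition~\ref{pr:tr-inv}), so your deduction of $\tau_i^2=1$ from the twist being an involution is circular as written. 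The paper establishes $\tr(\id_X)^2=1$ by an independent and genuinely nontrivial argument: Lemma~\ref{le:cyclic} shows that the cyclic permutation on $X^{\tens 3}$ is the identity (since $\Aut$ of an invertible object is abelian, the map $\Sigma_3\to\Aut(X^{\tens 3})$ kills $A_3$), and Kelly--Laplaza coherence then converts this into $\tr(\id_X)^3=\tr(\id_X)$. This step is the one substantive ingredient your sketch omits, and the coherence theorems you invoke (Theorems~\ref{th:coherence2} and~\ref{th:multi-coherence2}) themselves depend on it.

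One can break the circularity differently: Kelly--Laplaza gives $\tr(t_{X,X})=\tr(\id_X)$ directly (one connected component), while Proposition~\ref{pr:D-tr} gives $\tr(t_{X,X})=\tr(\id_{X\tens X})\cdot D(t_{X,X})=\tr(\id_X)^2\cdot D(t_{X,X})$; combining these yields $D(t_{X,X})=\tr(\id_X)^{-1}$, and since $D(t_{X,X})^2=D(\id)=1$ one concludes $\tr(\id_X)^2=1$. But some such argument must be supplied.
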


\begin{remark}
The groups $\pi_*(W)$ depend on the choice of objects
$X_1,\ldots,X_n$, and therefore we should probably write
$\pi_*^{X}(W)$.  We will always regard the sequence $\und{X}$ as being
understood, however.  Unfortunately, the ring structure from (1)
depends on even more than this: it depends on the choices of
$\alpha_1,\ldots,\alpha_n$.  Given only the sequence $\und{X}$, the
number of isomorphism types of different ring structures is
parameterized by the set $\Aut(S)^n$.  See Proposition~\ref{pr:A}
below.
\end{remark}

To see the difficulty in (1), assume that $f\colon \um{X}{a}\ra S$ and
$g\colon \um{X}{b}\ra S$ are two maps.  Of course we may tensor them
together to form $f\tens g\colon \um{X}{a}\tens \um{X}{b}\ra S\tens
S\iso S$.  However, this only yields an element in
$\pi_{a+b}(S)$ after choosing an isomorphism
$\um{X}{a}\tens \um{X}{b}\iso \um{X}{a+b}$.  The trouble is that there
are many such isomorphisms, and we cannot just choose one at random.
To ensure that $\pi_*(S)$ is associative these isomorphisms must be compatible in
the sense that some evident pentagons all commute.

Both (1) and (2) follow from the fact that one {\it can\/} choose such
isomorphisms in a compatible way.  This is not a particularly hard
result, but it does require some care.  The skew-commutativity in (3)
is more difficult, and when exploring this one quickly realizes the
desirability of a general coherence theory for invertible objects.  This
paper develops such a theory.  

Let us say a little more about skew-commutativity.  Given an
invertible object $X$ and a self-map $f\colon X\ra X$, there is a
well-defined invariant $\tr(f)\in \End(S)$ called the {\it trace\/}
(see Section~\ref{se:KL}).  Define $\tau_X=\tr(\id_X)$ and call this
the \dfn{basic commuter} for the object $X$.  One can prove in this generality that
$\tau_X\in \Aut(S)$ and satisfies $\tau_X^2=\id_S$.  In fact $\tau$
gives a homomorphism $\Pic(\cC)\ra \phantom{\!}_2\Aut(S)$, where
$\Pic(\cC)$ is the group of isomorphism classes of invertible objects
and $\phantom{\,}_2\Aut(S)$ denotes the $2$-torsion elements in
$\Aut(S)$.  This homomorphism is a basic invariant of the symmetric
monoidal category, and governs all commutativity issues.  See
Section~\ref{se:inv} for more information.

The motivating examples for $\cC$ one may wish to keep in mind throughout the paper are:
\begin{itemize}
\item The $G$-equivariant stable homotopy category, where $G$ is a
finite group (or even a compact Lie group).  In this case let
$V_1,\ldots,V_n$ be a collection of finite-dimensional, irreducible,
real representations for $G$ that represent every isomorphism type.
Let $X_i=S^{V_i}$ be the suspension spectra of the one-point
compactifications.
\item The motivic stable homotopy category over some chosen ground
ring.  Here $X_1=S^{1,0}$ and $X_2=S^{1,1}$ are the two basic motivic
spheres.  
\end{itemize}

\begin{remark}
The product  on $\pi_*(S)$ defined above might look different from the
standard composition product that is used for stable homotopy groups.
An easy argument shows that the products are, in fact, the same---see
Remark~\ref{re:comp}.
\end{remark}

\subsection{Coherence results}
Fix an invertible object $X$ with inverse $(Y,\alpha)$.  
Let $w$ be a tensor word in $X$ and $Y$.
As a specific
example, let us look at  the word $w=(X\tens (X\tens Y))\tens (Y\tens X)$.  
Clearly $w\iso X$, but there are different ways to 
construct such an isomorphism.  We might use the chain
\[ w\iso (X\tens S)\tens (Y\tens X)\iso (X\tens S)\tens S \iso X \]
where we used $\hat{\alpha}$ in the first isomorphism and $\alpha$ in
the second.  Or we might use the chain
\[ w\iso (X\tens S)\tens (Y\tens X) \iso X\tens (Y\tens X) \iso
(X\tens Y)\tens X \iso S\tens X \iso X \]
where we have used $\hat{\alpha}$ in both the first isomorphism and in
the fourth.  Are these two composite isomorphisms the same?  Are {\it
  all\/} composite isomorphisms the same?

The answer to the second question depends on how careful we are.  If
we allow ourselves to use the twist isomorphism $t\colon X\tens X\llra{\iso}
X\tens X$ then it is not necessarily true that all composite isomorphisms will be
the same.  However, if we agree not to use the twist then we 
obtain the following result:

\begin{thm}[Coherence without twists]
\label{th:coherence}
Let $w_1$ and $w_2$ be two tensor words in the formal variables $x$
and $y$.   Suppose given
two ``formal composites'' $f,g\colon w_1\ra w_2$, by which we mean
composable sequences of the following kinds of maps:
\begin{enumerate}[(i)]
\item associativity isomorphisms;
\item unital isomorphisms $S\tens W\iso W\iso W\tens S$;
\item $\alpha$ and $\hat{\alpha}$;
\item Maps obtained from the above ones by tensoring with identity
  maps;
\item Inverses of any of the above maps.
\end{enumerate}
Then the maps $f(\cC)$ and $g(\cC)$, obtained by substituting $X$ and $Y$ for $x$
and $y$ and taking the actual composite in $\cC$, are equal.
\end{thm}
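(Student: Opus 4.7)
The first step is to invoke MacLane's coherence theorem for monoidal categories to strictify $(\cC,\tens)$, so that associators and unitors become identities. After this reduction, tensor words in $x,y$ are just strings in the alphabet $\{x,y\}$ (with the empty string $\varepsilon$ playing the role of $S$), and the only nontrivial generators among the allowed morphisms are $\alpha\colon \varepsilon\to yx$, $\hat\alpha\colon xy\to\varepsilon$, their inverses, and tensor products of these with identities on either side. The defining property of $\hat\alpha$ from Proposition~\ref{pr:alpha-hat} becomes the snake identity $(\hat\alpha\tens\id_x)\circ(\id_x\tens\alpha)=\id_x$; the companion snake $(\id_y\tens\hat\alpha)\circ(\alpha\tens\id_y)=\id_y$ follows formally from the fact that both $\alpha$ and $\hat\alpha$ are isomorphisms (one can solve for $\hat\alpha^{-1}$ in two ways, and they must agree).

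It then suffices to prove coherence inside the free strict monoidal category $\cF$ on a dual pair $(x,y,\alpha,\hat\alpha)$ satisfying these two snake equations, with $\alpha$ and $\hat\alpha$ required to be isomorphisms: there is an evident functor $\cF\to\cC$, and any two formal composites in the sense of the theorem are the images of parallel morphisms in $\cF$. The key technical content is a normal-form theorem for $\cF$. For each word $w$, set $c(w):=\#_x(w)-\#_y(w)\in\Z$, and let $\nu(w)$ be $x^{c(w)}$ if $c(w)\geq 0$ and $y^{-c(w)}$ otherwise. I claim that $w$ is connected to $\nu(w)$ in $\cF$ by a canonical isomorphism obtained by repeatedly cancelling adjacent $xy$ pairs (via $\hat\alpha$) and adjacent $yx$ pairs (via $\alpha^{-1}$), and that any two such reductions yield the same morphism. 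This is a Church--Rosser argument via Newman's lemma: termination is obvious by induction on length, and local confluence reduces to two cases. When the two cancellable pairs in $w$ are disjoint, the contractions commute trivially; when they overlap, the word locally looks like $xyx$ or $yxy$, and the equalities $\hat\alpha\tens\id_x=\id_x\tens\alpha^{-1}$ on $xyx$ and $\id_y\tens\hat\alpha=\alpha^{-1}\tens\id_y$ on $yxy$ are precisely equivalent to the two snake identities.

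It remains to show that every parallel pair $f,g\colon w_1\to w_2$ in $\cF$ agrees. I would classify the generators appearing in an arbitrary formal composite into \emph{contractions} ($\hat\alpha$ and $\alpha^{-1}$, which shorten words) and \emph{expansions} ($\alpha$ and $\hat\alpha^{-1}$, which lengthen them). Using the snake identities together with the trivial commutativity at disjoint positions, I would push every expansion past every subsequent contraction: at disjoint spots they commute, and an expansion immediately followed by the matching contraction at the same spot is an identity. After this rewriting, every morphism factors as a pure expansion from $w_1$ up to some maximal intermediate word, followed by a pure contraction down to $w_2$. Applying the previous step in each direction, uniqueness of the normal-form reduction pins the contraction half down; dualizing (the category $\cF$ has an evident self-equivalence reversing arrows and swapping $(\alpha,\hat\alpha)$-type generators) pins down the expansion half as well, giving $f=g$. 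The main obstacle throughout is the confluence/diamond argument and the careful enumeration of overlap cases; the two snake identities are the single substantive input that makes everything collapse.
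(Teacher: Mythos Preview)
Your approach is correct and genuinely different from the paper's.  The paper reduces to the case of a self-map of $S$, then works inside the Kelly--Laplaza free \emph{compact closed} (hence symmetric) category $\KL(\{X\})$.  After replacing each $\alpha^{-1}$ by $\tr(\id_X)\ctens(\hat\alpha t)$ and each $\hat\alpha^{-1}$ by $\tr(\id_X)\ctens(t\alpha)$, the composite is represented by a disjoint union of simple closed curves; a small topological argument shows the number of left-pointing critical points on each component is odd, and one concludes using the key fact $\tr(\id_X)^2=\id_S$ (Proposition~\ref{pr:tr-inv}), itself proved via the alternating-group trick of Lemma~\ref{le:cyclic}.

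By contrast, your argument never uses the symmetry isomorphism at all: the rewriting system lives in a free \emph{non-symmetric} strict monoidal category, and the only substantive relations are the two snake identities.  This is both more elementary (no Kelly--Laplaza, no $\tr(\id_X)^2=1$) and strictly more general, since it establishes the theorem for invertible objects in an arbitrary monoidal category.  On the other hand, the paper's graphical/parity machinery is what carries over to prove Theorem~\ref{th:coherence2} (coherence \emph{with} self-twists), where symmetry is genuinely present and a parity invariant must be tracked; your confluence argument would need a substantially new idea there.

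Two small points to tighten.  First, your parenthetical justification of the second snake (``solve for $\hat\alpha^{-1}$ in two ways'') is not quite an argument; the clean derivation is exactly that of Proposition~\ref{pr:alpha-hat}, using faithfulness of $X\tens(-)$.  Second, in your final step the intermediate word $m_f$ in the factorization $f=C_f\circ E_f$ depends on $f$, so saying ``the contraction half is pinned down'' and ``the expansion half is pinned down'' separately does not immediately give $f=g$.  The fix is short: since $E_f^{-1}$ is a pure contraction, both $R_{w_1}\circ E_f^{-1}$ and $R_{w_2}\circ C_f$ are contractions from $m_f$ to the normal form, hence equal by your Church--Rosser argument; this yields $R_{w_2}\circ f=R_{w_1}$, so $f=R_{w_2}^{-1}\circ R_{w_1}$ is independent of the factorization.
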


\begin{ex}
The awkwardness in the statement of the above proposition is
commonplace in coherence results, because one has to eliminate certain
accidental compositions from occurring.  For example, suppose the
invertible object $X$ happens to be its own inverse: i.e., suppose
$Y=X$.  Then both $\alpha$ and $(\hat{\alpha})^{-1}$ are maps
$S\ra X\tens X$; however, the theorem does not claim that they are the
same map.  Indeed, on a formal level $\alpha$ is a map $S\ra yx$ and
$(\hat{\alpha})^{-1}$ is a map $S\ra xy$, and so there is no choice of
$w_2$ for which we can apply the theorem to these two maps.  
\end{ex}

\begin{ex}
To complement the above ``non-example'' of the proposition, here is a
true application.  Consider
the words $w_1=(x\tens y)\tens x$ and $w_2=x$.  There are two
formal compositions $w_1\ra w_2$ we can construct as follows:
\[ (x\tens y)\tens (x\tens y)\llra{\hat{\alpha}\tens \hat{\alpha}}
S\tens S \llra{\iso} S
\]
and
\[
\xymatrixcolsep{2.7pc}\xymatrix{
 (x y) (xy)\ar[r] &  x(y(xy))\ar[r] &  x ((y x) y)
\ar[r]^-{1\tens \alpha^{-1}\tens 1}
 &  x (S y) \ar[r] &
x y \ar[r]^{\hat{\alpha}} & S.
}
\]
Note that we omitted the tensor symbols in the second composite for
typographical reasons.
The proposition guarantees that the corresponding composites give the
same map in any symmetric monoidal category, for any invertible object
$X$ and inverse $(Y,\alpha)$.
\end{ex}

\begin{remark}[Canonical isomorphisms]
Let $X$ be an invertible object in $\cC$ with inverse $(Y,\alpha)$.
By a ``tensor word'' $w$ in $X$ and $Y$ we can mean either a formal
expression in the symbols ``$X$'' and ``$Y$'' or the actual object 
that results when the expression is evaluated in $\cC$.  We will
usually let the reader deduce the meaning from context, but
occasionally we will write $w(\cC)$ for the latter
interpretation---the evaluation of the formal word $w$ inside of
$\cC$.  Formal tensor words are best thought of as functors into $\cC$
where the allowable inputs are pairs $(X,(Y,\alpha))$.  

Consider the following statement:
given a tensor word $w$ as above, there is a unique $a\in \Z$ for which $w\iso
X^a$.  This is true for formal tensor words, but not necessarily true
for their evaluations in $\cC$.  For example, if our particular object
$X$ is its own inverse ($Y=X$) then we have $X\iso X^1\iso X^{-1}$ and
so the value of $a$ is not unique.  But it is not true that the formal
word ``$X$'' is isomorphic to the formal word ``$X^{-1}$''.

Keeping this nuance of language in mind, we can apply
Theorem~\ref{th:coherence} as follows.  Given a formal word $w$ in $X$
and $Y$, there is a unique $a\in \Z$ for which $w\iso X^a$ (canonical
isomorphism of functors) and moreover the isomorphism can be chosen
from the class described in Theorem~\ref{th:coherence}, in which case
it is {\it canonical\/}.  In this paper such canonical isomorphisms
will always be denoted $\phi$.  The provision of these canonical
isomorphisms is one of the main uses of coherence.
\end{remark}

We will need a coherence theorem that is more sophisticated than
Theorem~\ref{th:coherence}.  To state this, imagine that one has
formal words $w_1,w_2,\ldots,w_n$ in $x$ and $y$ together with a
string of maps
\[ w_1 \llra{f_1} w_2 \llra{f_2} \cdots \lra w_{n-1} \llra{f_{n-1}} w_n.\]
We assume that each $f_i$ is one of the following:
\begin{enumerate}[(i)]
\item an associativity isomorphism;
\item one of the unital isomorphisms $S\tens W\iso W\iso W\tens S$;
\item A twist map $t_{x,x}\colon x\tens x \ra x\tens x$, $t_{x,y}\colon x\tens
  y\ra y\tens x$, $t_{y,x}\colon y\tens x \ra x\tens y$,
  $t_{y,y}\colon y\tens y \ra y\tens y$;
\item Either $\alpha$ or $\hat{\alpha}$;
\item A map obtained from the above ones by tensoring with identity
  maps;
\item An inverse of any of the above maps.
\end{enumerate}
Let $(w,f)$ denote the tuple of $w_i$'s and $f_i$'s.  Define the 
\dfn{parity} of $(w,f)$ to be the total number of times $t_{x,x}$,
$t_{y,y}$, $t_{x,y}$ and $t_{y,x}$ appear---that is, the number of
$i$'s for which one of these maps appears as a tensor factor in $f_i$.

\begin{thm}[Coherence with twists]
\label{th:coherence2}
Let $(w,f)$ and $(w',f')$ be two strings as above, and let $k$ be the
length of the first and $l$ the length of the second.   Assume that
$w_1=w'_1$ and $w_k=w'_l$.  If $(w,f)$ and $(w',f')$ have the same
parity, then the composite of the $f_i$'s is equal to the composite of
the $f'_j$'s in any symmetric monoidal category, when $x$ and $y$ are
replaced with an invertible object $X$ and an inverse $(Y,\alpha)$.  
\end{thm}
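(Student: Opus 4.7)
The plan is to conjugate both composites by canonical twist-free isomorphisms into a standard form on $X^d$, and to show that after this normalization the only contributions come from twists, each contributing a factor of $\tau_X$.

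For a formal word $w$ in $x,y$ write $\deg(w)$ for the number of $x$'s minus the number of $y$'s.  Every move of type (i)--(vi) preserves degree, so any composite $h:w_1\to w_k$ forces $\deg(w_1)=\deg(w_k)=:d$.  By Theorem~\ref{th:coherence} and the canonicity remark, for each formal word $w$ there is a well-defined canonical isomorphism $\phi_w:w(\cC)\to X^d$, realized by any twist-free composite of moves (i)--(v) and independent of the choice.

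Writing $h = f_{k-1}\circ\cdots\circ f_1:w_1\to w_k$, I would telescope:
\[
\phi_{w_k}\circ h\circ \phi_{w_1}^{-1} \;=\; \prod_{i=1}^{k-1}\bigl(\phi_{w_{i+1}}\circ f_i \circ \phi_{w_i}^{-1}\bigr)\;\in\;\Aut(X^d).
\]
If $f_i$ is not a twist, the factor in parentheses is a twist-free endomorphism of $X^d$ and hence equals $\id_{X^d}$ by Theorem~\ref{th:coherence} (the identity is itself a twist-free composite, and any two such composites agree).  If $f_i$ is a twist, the claim is that the factor equals $\tau_X$, regarded as an element of $\Aut(X^d)$ via the canonical isomorphism $\Aut(X^d)\cong\Aut(S)$ induced by invertibility.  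Granting this, the telescope equals $\tau_X^p$ where $p$ is the parity of $h$; since $\tau_X^2=\id_S$, the product depends only on $p$ modulo $2$, proving the theorem after un-conjugating by the $\phi$'s.

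The main obstacle is the twist claim.  When $f_i$ acts as $\id_u\otimes t_{a,b}\otimes\id_v$ on a subword $a\otimes b$, we use the canonicity of $\phi_w$ (i.e., Theorem~\ref{th:coherence}) to select twist-free representatives of $\phi_{w_i}$ and $\phi_{w_{i+1}}$ that first collapse the central $a\otimes b$ (resp.~$b\otimes a$) to $S$ using a single $\hat\alpha$ or $\alpha^{-1}$ when possible, and then continue to $X^d$ by a shared route.  The factor then simplifies to $\id_u\otimes \eta\otimes\id_v$ for an explicit $\eta\in\Aut(S)$ built from $t_{a,b}$, $\alpha$, and $\hat\alpha$.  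Each of the four cases must produce $\tau_X$: the case $t_{x,x}$ gives the twist on $X\otimes X$ transported down to $\End(S)$, which is $\tau_X=\tr(\id_X)$ by definition; $t_{y,y}$ yields $\tau_Y$, equal to $\tau_X$ because $\tau$ is a homomorphism on $\Pic(\cC)$ landing in the $2$-torsion of $\Aut(S)$; and the mixed cases $t_{x,y}, t_{y,x}$ are handled by a short diagram chase using Proposition~\ref{pr:alpha-hat}.  This case analysis is the only non-formal step; once it is in hand, the remainder of the argument is purely bookkeeping.
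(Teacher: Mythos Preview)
Your approach is correct and genuinely different from the paper's.  The paper reduces to the case of a self-map of $S$ with even parity, passes to the Kelly--Laplaza category $\KL(\{X\})$, and analyzes the resulting picture of oriented closed curves: the key step is a topological parity lemma, namely that for such a system of curves one has
\[
(\text{\# of left-pointing critical points}) + (\text{\# of double points}) \equiv (\text{\# of components}) \pmod 2,
\]
established via Reidemeister moves on an overlaid unknot diagram.  This global count, together with $\tr(\id_X)^2=\id_S$, forces the composite to be the identity.

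Your argument is instead local: you telescope the conjugated composite into one-step factors and evaluate each via the $D$-invariant, using Lemma~\ref{le:trace} to strip off the $\id_u\otimes(\blank)\otimes\id_v$ padding and the conjugation by $\phi$.  The non-twist factors die by Theorem~\ref{th:coherence}; the twist factors each contribute $\tau_X$.  One small imprecision: the fact you need for the $t_{x,x}$ case is $D(t_{X,X})=\tau_X$, which is Proposition~\ref{pr:tr=D} rather than a definition (and its proof does invoke Kelly--Laplaza, so you are not entirely free of that machinery).  For the mixed cases your ``collapse to $S$'' trick yields the composite $\alpha^{-1}\circ t_{X,Y}\circ \hat\alpha^{-1}=\tr(\id_X)^{-1}=\tau_X$ directly from the definition of trace, so the reference to Proposition~\ref{pr:alpha-hat} is not quite what you want there either.

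What each approach buys: the paper's argument is uniform across all four twist types and scales transparently to the multi-object setting (Theorem~\ref{th:multi-coherence2}) because the curve-counting lemma is insensitive to labels.  Your argument avoids the topology entirely and makes the role of $\tau_X$ completely explicit, but at the cost of a case analysis and reliance on the prior Propositions~\ref{pr:tau} and~\ref{pr:tr=D}; extending it to several invertible objects requires checking that cross-twists $t_{X_i,X_j}$ with $i\neq j$ contribute trivially, which is true but an additional verification.
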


\begin{ex}
Consider the
composites
\[ S \llra{\alpha} Y\tens X \llra{t_{Y,X}} X\tens Y
\llra{\hat{\alpha}} S \]
and
\[ S \llra{\phi} (Y\tens Y) \tens (X\tens X) 
\llra{\id \tens t_X} (Y\tens Y)\tens (X \tens X) \llra{\phi^{-1}} S
\]
where $\phi$ is the canonical isomorphism provided by
Theorem~\ref{th:coherence}.  Then Theorem~\ref{th:coherence2}
states that these two composites are the same.  
An attempt to prove this directly will quickly demonstrate the
nontriviality of Theorem~\ref{th:coherence2}.
\end{ex}

Now we turn to coherence theorems involving several different
invertible objects.
Suppose again that $X_1,\ldots,X_n$ are invertible objects in $\cC$.  
For each $i$, let $(X_i^{-1},\alpha_i)$ denote a chosen inverse for
$X_i$.
Let $w$ be a tensor word in $X_1,\ldots,X_n$ and
$X_1^{-1},\ldots,X_n^{-1}$.  It is clear that $w$ is formally
isomorphic to $\power{X}{a}$ for
a uniquely determined $a\in \Z^n$.  We want a result which says
that different ways of constructing such an isomorphism yield the same
result.  

\begin{remark}
In our statements of the next two results we dispense with the
phrasing about formal compositions and their instances inside of a
given symmetric monoidal category.  However, this language should be
taken as implicit in the statements.
\end{remark}

\begin{thm}[Coherence without self-twists, multi-object case]
\label{th:multi-coherence1}
Let $w$ be a tensor word in the symbols $X_i$ and $X_i^{-1}$, $1\leq
i\leq n$.
There is an isomorphism $w\iso \power{X}{a}$ constructed as a
composite of the following kinds of maps, and moreover this isomorphism
is unique.  The maps we are allowed to use are
\begin{enumerate}[(i)]
\item associativity isomorphisms;
\item unital isomorphisms;
\item commutativity isomorphisms $X_i\tens X_j \ra X_j\tens X_i$ and
  $X_i\tens X_j^{-1} \ra X_j^{-1}\tens X_i$ for $i\neq j$;
\item the maps $\alpha_i$ and $\hat{\alpha}_i$;
\item maps obtained from (i)--(iv) by tensoring with identities;
\item all inverses of maps in (i)--(v).
\end{enumerate}
\end{thm}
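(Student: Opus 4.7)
The plan is to prove existence and uniqueness separately, reducing where possible to the single-object result Theorem~\ref{th:coherence}.

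\textbf{Existence.} Given a tensor word $w$ in the symbols $X_i^{\pm 1}$, I would repeatedly apply the inter-index commutativity maps (iii), together with associators and unital isomorphisms, to \emph{sort} the factors of $w$ by index: all $X_1^{\pm 1}$-factors first, then all $X_2^{\pm 1}$, and so on. Since only factors with distinct indices are ever swapped, the internal order among the $X_i^{\pm 1}$-factors for each fixed $i$ is preserved, and one arrives at an isomorphism
\[
  w \iso u_1 \tens u_2 \tens \cdots \tens u_n
\]
in which each $u_i$ is a tensor word purely in $X_i$ and $X_i^{-1}$. Theorem~\ref{th:coherence} then provides a canonical isomorphism $u_i \iso X_i^{a_i}$ for a unique integer $a_i$, and these assemble (via further associators) into the required isomorphism $w \iso \power{X}{a}$, built entirely from the allowed generators.

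\textbf{Uniqueness.} My strategy is to construct an evaluation functor from the free symmetric monoidal category $\cF$ on $n$ formally invertible objects (with formal inverses $(y_i, \alpha_i)$) into an explicit target in which all allowed composites $w \to \power{X}{a}$ are visibly equal. A natural candidate is a symmetric monoidal category of $\Z^n$-graded one-dimensional $k$-vector spaces, with $X_i \mapsto k_{e_i}$, $X_i^{-1}\mapsto k_{-e_i}$, and a braiding specified to act trivially on inter-index swaps; the $\alpha_i$ become the canonical evaluations. In this model, each generator of type (i)--(vi) becomes a structural isomorphism with no scalar content, so any two allowed composites from $w$ to $\power{X}{a}$ agree in the target. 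Combined with faithfulness of the functor on morphisms built from the allowed generators (which follows because no generator appears whose image carries a scalar different from $1$), one obtains the uniqueness statement in $\cF$, and then in $\cC$ by the universal property.

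\textbf{Main obstacle.} The hard step is the uniqueness argument, and it has two intertwined technicalities: (a) engineering the target so that its braiding is simultaneously symmetric monoidal (hexagon, naturality, compatibility with each $\alpha_i$ and $\hat{\alpha}_i$) \emph{and} trivial on the restricted class of swaps used in (iii); and (b) verifying that the resulting symmetric monoidal functor is faithful on the class of morphisms under consideration. The conceptual reason to expect this to work is that excluding the self-twists $t_{X_i,X_i}$ removes the only mechanism by which the basic commuters $\tau_{X_i}$ could be introduced, so nothing can obstruct uniqueness beyond what Theorem~\ref{th:coherence} already handles in each single-index block. An alternative would be a direct rewriting argument, showing that the allowed moves on tensor words — modulo the pentagon, triangle, and inter-index hexagon relations — are confluent and terminate at a canonical normal form; this avoids constructing a separate target, but demands equally careful analysis of how $\alpha_i$ and $\hat{\alpha}_i$ interact with the inter-index swaps.
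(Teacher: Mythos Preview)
Your existence argument is fine and matches the paper's treatment. The uniqueness argument, however, has a genuine gap.

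The reasoning you give for faithfulness is backwards. You propose a functor $F$ into $\Z^n$-graded lines in which every allowed generator maps to a structural isomorphism ``with no scalar content,'' i.e., to the scalar $1$. But then \emph{every} allowed composite $w \to \power{X}{a}$ maps to $1$ under $F$, so $F(f)=F(g)=1$ for any two such composites $f,g$. This is maximal non-faithfulness: the functor collapses the entire class of morphisms you care about to a single point, and agreement in the target carries no information about agreement in the source. To make a functor-to-a-model argument work you would need to know that the target \emph{is} the free symmetric monoidal category on $n$ invertible objects (so that $F$ is an equivalence), but identifying that free object is precisely the coherence theorem you are trying to prove.

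A further warning sign is that your argument never invokes the identity $\tr(\id_{X_i})^2=\id_S$ (Proposition~\ref{pr:tr-inv}). This is where the real content lies. The paper's proof reduces to a self-map $S\to S$, then replaces each $\alpha_i^{-1}$ by $\tr(\id_{X_i})\ctens(\hat{\alpha}_i t)$ and each $\hat{\alpha}_i^{-1}$ by $\tr(\id_{X_i})\ctens(t\alpha_i)$, so that the resulting composite lives in the Kelly--Laplaza category $\KL(\{X_1,\ldots,X_n\})$, where it is represented by a disjoint union of simple closed curves, each labelled by a single $X_i$ (inter-index crossings are allowed but self-crossings are not, precisely because self-twists were excluded). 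A parity count of left-pointing critical points on each curve, combined with $\tr(\id_{X_i})^2=\id_S$, shows the composite is the identity. Any correct argument must use this square-one identity or something equivalent; your model in graded lines with trivial braiding has $\tr(\id_{X_i})=1$ already, which is why it cannot detect the obstruction that the theorem is asserting vanishes.
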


We also have a more general version involving parity checks.  Suppose
$w_1,w_2,\ldots,w_k$ are tensor words in the $X_i$'s and $X_i^{-1}$'s,
and consider a composite
\[ w_1 \llra{f_1} w_2 \llra{f_2} \cdots \lra w_{k-1} \llra{f_{k-1}}
w_k.
\]
We assume that each $f_i$ is one of the following:
\begin{enumerate}[(i)]
\item an associativity isomorphism;
\item one of the unital isomorphisms $S\tens W\iso W\iso W\tens S$;
\item A twist map $t_{X_i,X_j}\colon X_i\tens X_j \ra X_j\tens X_i$, 
$t_{X_i,X_j^{-1}}\colon X_i\tens
  X_j^{-1}\ra X_j^{-1}\tens X_i$, $t_{X_i^{-1},X_j}\colon X_i^{-1}\tens X_j \ra X_j\tens X_i^{-1}$,
 or  $t_{X_i^{-1},X_j^{-1}}\colon X_i^{-1}\tens X_j^{-1} \ra X_j^{-1}\tens 
X_i^{-1}$, where possibly $i=j$;
\item One of the $\alpha_i$'s or $\hat{\alpha}_i$'s;
\item A map obtained from the above ones by tensoring with identity
  maps;
\item An inverse of any of the above maps.
\end{enumerate}
Define the \mdfn{$i$-parity} of the string $(w,f)$ to be the total number of
times $t_{X_i,X_i}$, $t_{X_i,X_i^{-1}}$, $t_{X_i^{-1},X_i}$, and
$t_{X_i^{-1},X_i^{-1}}$ appear in the $f_j$'s.  We have the following:

\begin{thm}[Coherence with self-twists, multi-object case]
\label{th:multi-coherence2}
Let $(w,f)$ and $(w',f')$ be two strings as above, where the length of
the first is $k$ and the length of the second is $l$.  Assume
$w_1=w'_1$ and $w_k=w'_l$.  If $(w,f)$ and $(w',f')$ have the
same $i$-parity for all $1\leq i\leq n$, then the composites of the
two strings are the same map.
\end{thm}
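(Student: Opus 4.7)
The plan is to reduce Theorem \ref{th:multi-coherence2} to the two earlier coherence results: Theorem \ref{th:multi-coherence1} (which handles everything except self-twists) and Theorem \ref{th:coherence2} (which handles self-twists for a single invertible object). First, by Theorem \ref{th:multi-coherence1}, any composite built from maps (i), (ii), (iv), (v), (vi) together with non-self twists $t_{X_i,X_j^{\pm 1}}$ ($i\neq j$) is \emph{canonical}---that is, uniquely determined by its source and target. Breaking each string $(w,f)$ and $(w',f')$ at every self-twist and absorbing the intervening subcomposites into canonical isomorphisms, I rewrite each composite in the normal form
\[ c_M \circ s_M \circ c_{M-1} \circ \cdots \circ s_1 \circ c_0, \]
where each $s_j$ is a single self-twist (tensored with identities) and each $c_j$ is canonical. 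The $i$-parity is simply the number of $j$ for which $s_j$ involves $X_i^{\pm 1}$.

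The next step is to separate self-twists by index. Using canonical isomorphisms to permute tensor factors, I would shuttle all self-twists on $X_n^{\pm 1}$ to one end, then all those on $X_{n-1}^{\pm 1}$, and so on.  This shuffling is legitimate because, once a self-twist on $X_i^{\pm 1}$ and one on $X_j^{\pm 1}$ (with $i\neq j$) are moved to act on disjoint tensor positions, bifunctoriality of $\tens$ makes them commute---and the canonical isomorphisms needed to produce this disjointness are uniquely specified by Theorem \ref{th:multi-coherence1}.  After the rearrangement, the composite factors as an alternation of canonical isomorphisms with ``blocks,'' one per index $i$, each block consisting of self-twists involving only $X_i^{\pm 1}$.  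Within each $i$-block I would apply Theorem \ref{th:coherence2} to the single object $X_i$ with inverse $(X_i^{-1},\alpha_i)$, treating all factors of the other $X_j$'s as inert spectator objects; the ``parity'' in Theorem \ref{th:coherence2} then matches exactly the $i$-parity, so the block is determined.  Stitching the pieces back together via Theorem \ref{th:multi-coherence1}, the full composite is determined by $(w_1, w_k, \text{parities})$.

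The chief obstacle I anticipate is the separation-by-index step: rigorously justifying that self-twists for distinct indices can be moved past each other up to canonical isomorphism, and that Theorem \ref{th:coherence2}---stated for a single invertible object in isolation---applies intact when other invertible objects are present as spectator tensor factors.  Both points are best made by working in the free symmetric monoidal category on $n$ invertible objects with chosen inverses, where tensor manipulations reduce to bifunctoriality and the two prior coherence results provide the only nontrivial relations one needs.  In that setting the bookkeeping becomes purely formal, and the desired statement---any two parallel composites with the same $i$-parities for all $i$ are equal---reduces cleanly to the conjunction of Theorems \ref{th:multi-coherence1} and \ref{th:coherence2}.
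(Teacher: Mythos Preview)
Your strategy is genuinely different from the paper's, and the difficulty you flag is real.  The paper does \emph{not} bootstrap from Theorems~\ref{th:multi-coherence1} and~\ref{th:coherence2}; instead it repeats the Kelly--Laplaza argument directly in $\KL(\{X_1,\ldots,X_n\})$.  After the usual reduction to an endomorphism of $S$ with all $i$-parities even, one replaces each $\alpha_i^{-1}$ and $\hat{\alpha}_i^{-1}$ by its twisted form (picking up factors of $\tr(\id_{X_i})$) and draws the resulting picture.  The picture is now a union of closed curves, each carrying a single label $X_i$; crossings between differently-labelled curves are irrelevant both to the Kelly--Laplaza map and to any $i$-parity count.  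Applying the topological identity (\ref{eq:critical2}) to the $X_i$-labelled curves \emph{alone} shows that the exponent of $\tr(\id_{X_i})$ in the final answer is even, for each $i$ separately.  No separation-by-index or spectator argument is needed: the labels do the separating automatically inside $\KL$.

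Your route could in principle be completed, but both obstacles you name require real work.  For the separation step, you need that a self-twist on index $i$ can be commuted past a canonical isomorphism---but canonical isomorphisms may create or destroy $X_i^{\pm1}$ factors via $\alpha_i,\hat\alpha_i$, so ``tracking the two swapped factors through $c$'' is not well-defined.  For the spectator step, Theorem~\ref{th:coherence2} is stated for formal words in a single pair $(x,y)$, and extending it to allow inert tensor factors from other invertible objects is essentially a mini-coherence theorem of its own (and the cleanest proof of that extension would again go through $\KL$).  Your closing suggestion---to carry out the bookkeeping in the free symmetric monoidal category on $n$ invertibles---is exactly right, but once you are computing there you may as well use the pictorial calculus directly, which is what the paper does and which renders the algebraic rearrangement unnecessary.
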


\subsection{Applications}
The first application answers the question raised at the beginning of
the paper.  If $f\in \pi_{a}(S)$ and $g\in \pi_{b}(S)$
then form the tensor product $f\tens g\colon \um{X}{a}\tens
\um{X}{b}\ra S\tens S\iso S$.  Theorem~\ref{th:multi-coherence1}
supplies a canonical isomorphism $\um{X}{a}\tens \um{X}{b} \ra \um{X}{a+b}$,
and using this we obtain an element $f\cdot g\in \pi_{a+b}(S)$.
Similarly, one obtains maps $\pi_{a}(S)\tens \pi_{b}(W) \ra
\pi_{a+b}(W)$ and so forth.  Coherence guarantees that these pairings
all have the desired associativity (see Section~\ref{se:MacLane} for details).  

Given a map $f\colon \power{X}{a} \ra \power{X}{b}$ there are two
evident ways to recover an element of $\pi_{a-b}(S)$.  We can tensor
on the left with $\power{X}{-b}$ and then use the canonical
isomorphisms from Theorem~\ref{th:multi-coherence1},
or we can tensor on the right and use canonical isomorphisms.  We call
the associated elements $[f]_r$ and $[f]_l$, respectively.  Another
application of coherence is to relate these two elements:
\begin{myequation}
\label{eq:r=l}
 [f]_r=[f]_l\cdot \prod \tau_i^{b_i(a_i-b_i)}
\end{myequation}
where the $\tau_i$'s are the basic commuters of the $X_i$'s.  
This and many related formulas are developed in
Section~\ref{se:app}.  

Let us very briefly indicate the idea behind skew-commutativity.  If
$f\colon \power{X}{a}\ra S$ and $g\colon \power{X}{b}\ra S$ then we
may form the diagram
\[ \xymatrix{
\power{X}{a}\tens \power{X}{b}\ar[d]_{t_{a,b}} \ar[r]^{f\tens g} & S\tens
S\ar@{=}[r] & S. \\
\power{X}{b}\tens \power{X}{a} \ar[ur]_{g\tens f}
}
\]
A little work gives that inside $\pi_*(S)$ we have $f\cdot g=g\cdot
f\cdot [t_{a,b}]_r$ (note that by (\ref{eq:r=l})
one has $[t_{a,b}]_r=[t_{a,b}]_l$). 
The content of Proposition~\ref{pr:main1}(c) is the
identification of $[t_{a,b}]_r$ as a product of basic commuters;
this is a direct consequence of Theorem~\ref{th:multi-coherence2}, which says that the
associated composite $S\ra S$ is determined purely by the parities
involved.  See Section~\ref{se:app} for complete details.

\subsection{The stable motivic homotopy ring}
We close this long introduction with a very specific example.  In the
stable motivic homotopy category (over a chosen ground field) there
are two basic spheres denoted $S^{1,0}$ and $S^{1,1}$.  These are
invertible objects.  More generally one sets
$S^{p,q}=(S^{1,0})^{\Smash (p-q)}\Smash (S^{1,1})^{\Smash(q)}$, for
any $p,q\in \Z$.  The bigraded stable homotopy ring $\pi_{*,*}(S)$ is
an instance of the general situation considered in this paper,
although unfortunately the bigrading is different from the generic
bigrading we adopted for Proposition~\ref{pr:main1}: the motivic group
$\pi_{a,b}(S)$ corresponds to what we have been calling $\pi_{(a-b,b)}(S)$.

The basic commuter for $S^{1,0}$ is the element $-1\in \pi_{0,0}(S)$.  The
basic commuter for $S^{1,1}$ is represented by the twist map
$S^{1,1}\Smash S^{1,1}\ra S^{1,1}\Smash S^{1,1}$; in motivic homotopy
theory it is usually denoted $\epsilon \in \pi_{0,0}(S)$.  The 
skew-commutativity result for the motivic stable homotopy ring is the
following, obtained as a direct corollary of Proposition~\ref{pr:main1}:

\begin{prop}
\label{pr:motivic-commute}
For $f\in \pi_{a,b}(S)$ and $g\in \pi_{c,d}(S)$ one has
\[ fg=gf\cdot (-1)^{(a-b)(c-d)}\cdot \epsilon^{bd}.
\]
\end{prop}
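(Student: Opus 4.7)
The plan is to view Proposition~\ref{pr:motivic-commute} as an instance of Proposition~\ref{pr:main1}(3) with $n=2$, $X_1 = S^{1,0}$ and $X_2 = S^{1,1}$, and then translate the bigradings.  First, I would observe that $S^{1,0}$ and $S^{1,1}$ are invertible objects in the motivic stable homotopy category, so Proposition~\ref{pr:main1} applies to the sequence $\und{X} = (S^{1,0}, S^{1,1})$; this gives a ring structure on the $\Z^2$-graded group $\pi_{*,*}^{\und{X}}(S)$ where $\pi^{\und{X}}_{(m,n)}(S) = \cC(\,(S^{1,0})^{m}\Smash (S^{1,1})^{n}, S)$.

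Next I would perform the re-indexing.  Using the definition $S^{p,q} = (S^{1,0})^{\Smash(p-q)}\Smash (S^{1,1})^{\Smash q}$, the motivic group $\pi_{a,b}(S) = \cC(S^{a,b},S)$ equals $\pi^{\und{X}}_{(a-b,b)}(S)$, so an element $f \in \pi_{a,b}(S)$ has general-bigrading exponents $(a_1,a_2) = (a-b,b)$, and similarly $g \in \pi_{c,d}(S)$ has exponents $(b_1,b_2) = (c-d,d)$.  The products in the two notations agree (both are defined by smashing with the canonical isomorphisms from Theorem~\ref{th:multi-coherence1}), so Proposition~\ref{pr:main1}(3) yields
\[ fg = gf \cdot \tau_1^{(a_1 b_1)} \tau_2^{(a_2 b_2)} = gf \cdot \tau_1^{(a-b)(c-d)} \tau_2^{bd}. \]

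Finally I would identify the basic commuters.  The paragraph preceding the proposition already asserts $\tau_{S^{1,0}} = -1$ and $\tau_{S^{1,1}} = \epsilon$, so substituting gives exactly
\[ fg = gf \cdot (-1)^{(a-b)(c-d)} \cdot \epsilon^{bd}, \]
which is the desired formula.  The only real obstacle here is bookkeeping with the two bigrading conventions; once the dictionary $(a,b)\leftrightarrow (a-b,b)$ is in place, the statement is a direct specialization of the coherence-based result.  (If one wanted to be thorough, one would also briefly verify the claimed values of $\tau_{S^{1,0}}$ and $\tau_{S^{1,1}}$: for $S^{1,0}$ this is the classical fact that the degree of the twist on $S^1\Smash S^1$ is $-1$, and for $S^{1,1} = \Gm$ the element $\epsilon = \tr(\id_{\Gm})$ is by definition the basic commuter, but these are background facts rather than part of the proof proper.)
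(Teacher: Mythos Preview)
Your proposal is correct and matches the paper's approach exactly: the paper presents Proposition~\ref{pr:motivic-commute} as a direct corollary of Proposition~\ref{pr:main1}(3), and explicitly notes the bigrading dictionary $\pi_{a,b}(S)\leftrightarrow\pi_{(a-b,b)}(S)$ together with the identifications $\tau_{S^{1,0}}=-1$ and $\tau_{S^{1,1}}=\epsilon$. Your write-up carries out precisely this specialization.
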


Now assume that the ground field is $\C$, so that there is a
realization map $\psi$ from
the stable motivic homotopy category to the classical stable homotopy
category of topological spaces.  This induces a collection of group
homomorphisms $\psi_{p,q}\colon \pi_{p,q}(S) \ra \pi_p(S)$.  One's
first guess might be that
these maps assemble into a ring homomorphism
$\psi\colon \pi_{*,*}(S)\ra \pi_*(S)$, but this is not quite right.  Instead
there is the following identity:

\begin{prop}
\label{pr:motivic-compare}
For $f\in \pi_{a,b}(S)$ and $g\in \pi_{c,d}(S)$ one has
\[ \psi(fg)=\psi(f)\cdot\psi(g) \cdot (-1)^{b(c-d)}.
\]
\end{prop}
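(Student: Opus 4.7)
The plan is to unfold both sides as explicit composites and then invoke the coherence theorems. Write $u=a-b$ and $v=c-d$, so that in the paper's indexing $f\colon X_1^u\tens X_2^b\ra S$ and $g\colon X_1^v\tens X_2^d\ra S$. By definition, $fg\in \pi_{a+c,b+d}(S)$ is the composite $(f\tens g)\circ\phi^{-1}$, where
\[ \phi\colon X_1^u\tens X_2^b\tens X_1^v\tens X_2^d \llra{\iso} X_1^{u+v}\tens X_2^{b+d} \]
is the canonical isomorphism of Theorem~\ref{th:multi-coherence1}. Applying the symmetric monoidal functor $\psi$, and using $\psi(X_1)=\psi(X_2)=S^1$, the class $\psi(fg)\in \pi_{a+c}(S)$ is obtained by pre-composing $\psi(f)\tens \psi(g)$ with $\psi(\phi)^{-1}$ and then with a classical canonical isomorphism $(S^1)^{a+c}\ra \psi(X_1^{u+v}\tens X_2^{b+d})$. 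On the other hand, $\psi(f)\cdot\psi(g)$ is built from the classical canonical isomorphism $(S^1)^{a+c}\iso (S^1)^a\tens (S^1)^c$ of Theorem~\ref{th:coherence}. So the ratio $\psi(fg)/(\psi(f)\psi(g))$ is a self-map of $S$, i.e.\ an element of $\pi_0(S)=\Z$.

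The key point is how $\psi$ transforms $\phi$. The isomorphism $\phi$ is built from the ingredients of Theorem~\ref{th:multi-coherence1}: associativities, unitals, the $\alpha_i$ and $\hat\alpha_i$, and the cross-twists $X_1\tens X_2\ra X_2\tens X_1$ (permitted freely because they interchange \emph{distinct} invertible objects, which carries no parity weight). Under $\psi$, the first three kinds of ingredients go to ingredients permitted by the twist-free Theorem~\ref{th:coherence} for the single invertible object $S^1$, while each cross-twist becomes an honest $S^1$-self-twist $S^1\tens S^1\ra S^1\tens S^1$. Consequently, the ratio composite above, viewed in the classical symmetric monoidal category, is a self-map of $(S^1)^{a+c}$ whose only parity-contributing ingredients are the self-twists coming from the cross-twists of $\phi$. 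By Theorem~\ref{th:coherence2} applied in the classical category, together with the interpretation of basic commuters in Proposition~\ref{pr:main1}(3), this composite equals $\tau_{S^1}^k=(-1)^k$, where $k$ is the total number of self-twists and $\tau_{S^1}=-1$ is the classical basic commuter.

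To count $k$, I would put $\phi$ in a two-step normal form: first use cross-twists to move the block $X_1^v$ past $X_2^b$, giving $X_1^u\tens X_1^v\tens X_2^b\tens X_2^d$, then consolidate with associativities and the $\alpha_i,\hat\alpha_i$. The first step uses exactly $b|v|$ cross-twists and the second is twist-free, so $k\equiv bv=b(c-d)\pmod 2$, yielding the claimed sign $(-1)^{b(c-d)}$. Theorem~\ref{th:multi-coherence2} guarantees that any other decomposition of $\phi$ has the same $i$-parity for each $i$, so this count is intrinsic to $\phi$.

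The main obstacle is the middle step: one must carefully verify that every non-cross-twist ingredient of $\phi$ becomes, under $\psi$, a legitimate ingredient of a twist-free canonical classical isomorphism in the sense of Theorem~\ref{th:coherence}. This is largely bookkeeping, but it does require matching the motivic choices $\alpha_1,\alpha_2$ with their classical counterparts under $\psi$; with compatible choices on both sides, the coherence machinery cleanly reduces everything to the parity count above.
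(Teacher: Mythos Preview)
Your proposal is correct and follows essentially the same approach as the paper: both recognize that the motivic canonical isomorphism $\phi$ involves cross-twists $X_1\leftrightarrow X_2$ which, under $\psi$, become classical self-twists $S^1\leftrightarrow S^1$ and hence contribute signs. The paper packages the sign computation through the machinery of Section~\ref{se:app}---it identifies the discrepancy as the self-map $T$ containing $t_{r,b}$ and then invokes Proposition~\ref{pr:tau-ab} to get $D(T)=\tau_{r,b}=(-1)^{rb}$---whereas you recount the cross-twists by hand and appeal directly to Theorem~\ref{th:coherence2}; this amounts to reproving the relevant special case of Proposition~\ref{pr:tau-ab}.

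Two small cleanups: your ``$b|v|$'' should read $|b|\cdot|v|$ (the parity conclusion $k\equiv bv\pmod 2$ is unaffected), and your final appeal to Theorem~\ref{th:multi-coherence2} is both unnecessary and slightly misstated---that theorem says equal parities imply equal maps, not the converse. You do not need the count to be intrinsic to $\phi$: any single decomposition suffices, since $\psi(\phi)$ is a fixed map and its $D$-invariant is what you are computing.
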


This result was one of the motivations for the work in this paper;
we include the proof as a brief
appendix.  

\begin{remark}
It is satisfying to check that Propositions~\ref{pr:motivic-commute} and
\ref{pr:motivic-compare} are (taken together) compatible with the 
graded-commutativity of the classical stable homotopy ring.  This uses
that $\psi(\epsilon)=-1$.  
\end{remark}

\vspace{0.1in}

\subsection{Generalizations}
Let $\Pic(\cC)$ be the group of isomorphism classes of invertible
objects in $\cC$.  Let $A$ be an abelian group and let $h\colon A\ra
\Pic(\cC)$ be a homomorphism (the case $A=\Pic(\cC)$ is the main one
of interest, but it is useful to work in slightly greater generality).  
The question we pose is whether $\pi(S)$ can be regarded as
an $A$-graded ring.  We can certainly choose, for each $a\in A$, an
object $X_a$ in the isomorphism class $h(a)$.  We can then
define
an $A$-graded abelian group by
\[ \pi^A_a(S)=\cC(X_a,S). \]  
To give a pairing on this graded group one should start by choosing isomorphisms
$\sigma_{a,b}\colon X_{a+b}\ra X_a\tens X_b$, and for the pairing to be associative
these isomorphisms must satisfy a certain compatibility condition (a unital
condition should also be imposed).  Our coherence results show that
when $A$ is finitely-generated and free this can be
accomplished---although it is important to realize that the method for
doing so is not quite canonical, depending both on a choice of basis
for $A$ and a choice of the $\alpha_i$ maps we encountered earlier.
What about other values of $A$?  We will show that
\begin{enumerate}[(1)]
\item For any abelian group $A$, the isomorphisms $\sigma_{a,b}$ can
be chosen so that $\pi_*^A(S)$ is an associative and unital ring;
\item However, the choices involved in (1) are not canonical and the
different isomorphism classes of rings one can obtain are in bijective
correspondence with the elements of the group cohomology $H^2(A;\Aut(S))$.
\end{enumerate}
In homotopy theory one
often hears the slogan ``one should grade things by the invertible
objects''; point (2) above suggests that this is a little more dicey
than one might wish.
These results are in Section~\ref{se:schema}.

\subsection{Some background, and an apology}  From a certain
perspective this paper is entirely on the subject of getting signs
correct---although the ``signs'' are not just $\pm 1$ but rather
elements in the $2$-torsion of $\Aut(S)$, where $(\cC,\tens,S)$ is a
symmetric monoidal category.  Many of the results are undoubtedly
folklore, but just lacking a convenient reference.  Since this is a
subject where it seems particularly important to {\it have\/} a convenient
reference---no one likes to think about signs---we have included quite
a bit of exposition (perhaps overdoing it on occasion).

Associativity and commutativity for $RO(G)$-graded stable homotopy
rings have typically been dealt with in a somewhat different way than
what we describe here.  In essence, the various choices for
isomorphisms are built into the framework from the very beginning, and
one is tasked with keeping track of them.  We refer the reader to
\cite[Chapter 13, Section 1]{Ma} and
\cite[Section 21.1]{MS} for detailed discussions.  

Certainly  Proposition~\ref{pr:main1} and related results are if not
well-known then at least not surprising---although I
wonder if the sign in the multiplicativity of the forgetful map (see
Proposition~\ref{pr:motivic-compare}) has been noticed before, either in
the motivic or equivariant context.  I have also been unable to find a
reference in the literature similar to 
Proposition~\ref{pr:l-r-general}, even though the sign questions
dealt with by that result are ubiquitous.

Invertible objects are well-studied in the literature (for example, in
\cite{FW}), but in somewhat sporadic places---and there seem to be some gaps.  
For a self-map $f\colon X\ra X$ where $X$ is invertible, there are two
ways to obtain an element of $\End(S)$.  One is called the {\it
trace\/} of $f$, and the other is something that does not have a
standard name---in this paper we call it the {\it $D$-invariant\/}.  These
two invariants can be different, although they sometimes get confused.
We attempt to give a careful treatment in Section~\ref{se:inv}.

As far as the coherence statements are concerned, the earliest result
along these lines seems to be \cite[Lemma 1.4.3]{D}, due to Deligne.
However, Deligne's result (stated without proof) only applied to
symmetric monoidal categories where the self-twists $X\tens X\ra
X\tens X$ are all equal to the identity; as is clear from the results
listed above, this omits the important and nontrivial phenomena that
occur in the general case.  Symmetric monoidal categories in which all
objects are invertible are treated again in \cite{FW}.  A
classification theorem is given (see \cite[Corollary 6.6]{FW}), from
which coherence results are easily deducible, but again only in the
case where the self-twists are all equal to the identity.  Another
sort of classification theorem for such categories is given in the
unpublished PhD thesis \cite[Chapter II, Section 2, Proposition 5]{H};
but although Ho\`ang's theorem allows for non-identity self-twists the
classification  is of a different nature and does not seem to yield any
coherence results.  The excellent and influential paper \cite{JS} has
coherence results in the braided context, but not for invertible
objects; it gives a classification theorem for braided monoidal categories
where all objects are invertible,  but again not yielding coherence
results in any evident way.  The literature contains many more
sophisticated coherence results than the ones presented here, and so
it seems to be merely an unfortunate accident that there is no
convenient reference for them.

\subsection{Organization of the paper}

In Section~\ref{se:MacLane} we give a brief review of MacLane's
coherence theorem for monoidal categories, and explain how it gives
rise to associativity results for $\N$-graded morphism groups.
Section~\ref{se:KL} then reviews the deeper coherence theorem of
Kelly-Laplaza, which applies to symmetric monoidal categories with
left duals.  Section~\ref{se:inv} develops the basic theory of
invertible objects, in particular establishing that the trace of the
identity map on such an object has order at most two; this is a key
result used throughout the paper.  In Section~\ref{se:coherence} we
prove the coherence theorems for invertible objects, and in
Section~\ref{se:app} we give the applications to $\Z^n$-graded morphism
rings.  Finally, Section~\ref{se:schema} deals with the topic of grading
morphism rings by non-free abelian groups.  

\subsection{Acknowledgements}

This research was partially supported by NSF grant DMS-0905888.  The author
is grateful to Sharon Hollander, Peter May, Victor Ostrik, and Vadim Vologodsky for
helpful conversations.


\section{Review of MacLane's coherence}
\label{se:MacLane}

Here we recall how MacLane's classical coherence theorem for symmetric
monoidal categories gives rise to an associativity result for
$\N^n$-graded morphism rings.

\medskip

Let $(\cC,\tens,S)$ be a symmetric monoidal category.  
Where needed, we will denote the associativity isomorphism
$(x\tens y)\tens z\iso x\tens(y\tens z)$ by $a$ and the symmetry
isomorphism $x\tens y\iso y\tens x$ by $t$.

Let $w$ be any tensor word made up of formal variables $x_i$---for 
instance, the word
$((x_1\tens x_2)\tens x_1) \tens (x_2\tens x_3)$ is one example.  Such
tensor words can be identified with certain kinds of functors
$\cC^n\ra \cC$, where $n$ is the number of letters in the word.  Precisely, these
are the functors that can be built up from $\tens\colon\cC^2\ra \cC$
using composition and the operation of cartesian
product with identity  maps.  

Using the associativity and commutativity isomorphisms, there is a
``formal isomorphism'' (or natural isomorphism) between $w$ and some
word $\power{x}{a}$, for a uniquely determined $a\in \N^n$.  To fix such an
isomorphism, here is what we do.  First, relabel the $x_1$'s in $w$ as
$x_{1a},x_{1b},x_{1c}$, etc., with the indices appearing
alphabetically from left to right in the word.  Do the same for all
the other $x_i$'s.  Let $w'$ denote the new word thus constructed.
Regard $w'$ as a functor $\cC^N\ra \cC$, where $N$ is the total number
of variables in $w'$.  Using the associativity, commutativity, and unital
isomorphisms one can construct a natural isomorphism between the
functor corresponding to $w'$ and the functor corresponding to the
word
\[ (x_{1a}\tens x_{1b}\tens \cdots ) \tens (x_{2a}\tens x_{2b}\tens
\cdots) \tens \cdots \tens (x_{na}\tens x_{nb}\tens \cdots).
\]
Moreover, MacLane's coherence theorem \cite[Theorem XI.1.1]{M} 
says
that this natural isomorphism is {\it uniquely determined\/}.  The
exact choices of associativity and commutativity isomorphisms used to
construct it are definitely not unique, but the composite isomorphism
itself is unique.  Now evaluate this natural isomorphism in the case
where all the $x_{1,*}$ objects are equal to $x_1$, all the $x_{2,*}$
objects are equal to $x_2$, etc.  This is our definition of
$\phi\colon w\llra{\iso} \power{x}{a}$

Using the observation of the last paragraph, for $a,b\in \N^n$ we obtain canonical
isomorphims
\[ \phi_{a,b}\colon \power{x}{a} \tens \power{x}{b} \llra{\iso} \power{x}{a+b}
\]
such that the following diagram commutes:
\begin{myequation}
\label{eq:assoc}
\xymatrix{
(\power{x}{a}\tens \power{x}{b}) \tens \power{x}{c}
  \ar[rr]^\iso\ar[d]_{\phi_{a,b}\tens \id}
&& \power{x}{a}\tens (\power{x}{b} \tens \power{x}{c})
  \ar[d]^{\id\tens \phi_{b,c}} \\
\power{x}{a+b}\tens \power{x}{c}\ar[dr]_{\phi_{a+b,c}} && \power{x}{a} \tens
\power{x}{b+c}\ar[dl]^{\phi_{a,b+c}}\\
& \power{x}{a+b+c}.
}
\end{myequation}
The reason it commutes is again by the coherence theorem.  Both ways
of moving around the diagram are instances of a natural transformation
made up of the associativity and commutativity isomorphisms, where one
relabels the $x_1$'s appearing as $x_{1a},x_{1b}$, etc., and the same
for the other variables.  MacLane's theorem says that there is a unique such
natural transformation, and so the two ways of moving around the
diagram must be the same.    (Note also that when $a$ or $b$ is the
zero vector then $\phi_{a,b}$ is the unital isomorphism from the
symmetric monoidal structure).

Now assume that $\cC$ is also an additive category, and that the
tensor product is an additive functor in each variable.  
Let $X_1,\ldots,X_n$ be fixed objects in $\cC$.  Consider the $\N^n$-graded
abelian group
\[ R=\oplus_{a\in \N^n} \cC(\power{X}{a},S).
\]
We will also write $R_{a}$ for $\cC(\power{X}{a},S)$.     
We claim that $R$ has the structure of an $\N^n$-graded ring.  The product is
defined as follows.  If $f\in R_{a}$ and $g\in R_{b}$,
define $fg\in R_{a+b}$ to be the composition
\[ \power{X}{a+b}\llra{\phi_{a,b}^{-1}} \power{X}{a}\tens \power{X}{b}
\llra{f\tens  g}
S\tens S \iso S.
\]

\begin{remark}
\label{re:comp}
Note that the above product can also be described as the composition
\[ \power{X}{a+b}\iso 
\power{X}{a}\tens\power{X}{b} \llra{\id\tens g} \power{X}{a}\tens S
\llra{f\tens \id} S\tens S \iso S.
\]
In this way the product in $R$ can be thought of as induced by the
composition in the category $\cC$: $fg$ comes from composing $f$ with an
appropriately ``suspended'' version of $g$.
\end{remark}

\begin{prop}
\label{pr:R}
$R$ is a graded ring (associative and unital), and $R_0$ is central.
\end{prop}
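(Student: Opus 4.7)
The plan is to verify the three assertions separately: associativity, unitality, and centrality of $R_0$, with all the heavy lifting done by the coherence diagram (\ref{eq:assoc}).

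For \textbf{associativity}, fix $f\in R_a$, $g\in R_b$, $h\in R_c$. I would expand both $(fg)h$ and $f(gh)$ into composites $\power{X}{a+b+c}\ra S$. Unwinding the definition, $(fg)h$ is the composite along the top-left path of a diagram built from $\phi_{a+b,c}^{-1}$, $\phi_{a,b}^{-1}\tens\id$, $(f\tens g)\tens h$, and the unital collapse $(S\tens S)\tens S\iso S$; likewise $f(gh)$ is the top-right path using $\phi_{a,b+c}^{-1}$, $\id\tens \phi_{b,c}^{-1}$, $f\tens(g\tens h)$, and $S\tens(S\tens S)\iso S$. By bifunctoriality of $\tens$, each route factors as ``first rearrange the $X_i$'s using $\phi$'s, then apply $f\tens g\tens h$, then collapse $S$'s''; so equality reduces to comparing two composite isomorphisms between $\power{X}{a+b+c}$ and $(\power{X}{a}\tens\power{X}{b})\tens\power{X}{c}$ (and the symmetric version), together with the fact that the two unital collapses $S^{\tens 3}\ra S$ agree. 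The first reduction is precisely the content of diagram (\ref{eq:assoc}), and the second is a trivial instance of MacLane's coherence for $S\tens S\tens S$.

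For \textbf{unitality}, the candidate identity is $1=\id_S\in R_0$. For $f\in R_a$, the product $f\cdot 1$ is $\power{X}{a}\llra{\phi_{a,0}^{-1}}\power{X}{a}\tens S \llra{f\tens \id_S} S\tens S\iso S$. But by the parenthetical remark following (\ref{eq:assoc}), $\phi_{a,0}$ is the right unital isomorphism from the symmetric monoidal structure, so after composing with $f\tens\id_S$ and the unital isomorphism $S\tens S\iso S$ the composite collapses to $f$ by naturality of the unital isomorphism. The same argument with $\phi_{0,a}$ handles $1\cdot f=f$.

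For \textbf{centrality of $R_0$}, fix $f\in R_0=\cC(S,S)$ and $g\in R_a$. Using the alternative description of the product from Remark~\ref{re:comp}, both $fg$ and $gf$ take the form ``apply $g$, then apply $f$ on the appropriate tensor factor of $S$''. More precisely, $fg$ is $\power{X}{a}\cong S\tens\power{X}{a}\llra{\id\tens g} S\tens S\llra{f\tens \id}S\tens S\iso S$ while $gf$ is $\power{X}{a}\cong \power{X}{a}\tens S\llra{g\tens \id} S\tens S\llra{\id\tens f} S\tens S\iso S$. Equality of these two composites reduces, via the naturality of the unit isomorphisms and symmetry, to the assertion that the two endomorphisms $f\tens\id_S,\ \id_S\tens f\colon S\tens S\ra S\tens S$ agree after composing with $S\tens S\iso S$ --- a standard consequence of MacLane's coherence for the unit object.

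\textbf{The main obstacle} is really just the careful bookkeeping in the associativity step: one must check that after pulling the $f\tens g\tens h$ (with its two parenthesizations) out of both composites via bifunctoriality, what remains is literally one of the two paths of (\ref{eq:assoc}). Everything else is a short application of coherence or naturality. Once that diagram chase is done, the proposition follows without further incident.
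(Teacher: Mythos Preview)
Your proposal is correct and follows essentially the same route as the paper: associativity via diagram~(\ref{eq:assoc}), unitality via the observation that $\phi_{a,0}$ and $\phi_{0,a}$ are the unital isomorphisms, and centrality of $R_0$ by a short coherence argument. Two small remarks: the paper also records the (trivial) distributivity check coming from biadditivity of $\tens$, which you silently omit; and for centrality the paper uses the twist $t$ directly to identify the two composites, whereas you reduce to the statement that $f\tens\id_S$ and $\id_S\tens f$ agree after the unital collapse---these are equivalent one-line arguments.
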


\begin{proof}
Distributivity follows immediately from the fact that $\tens$ is
biadditive: for instance, if $f,g\in R_{a}$ and $h\in
R_{b}$ then the map $(f+g)\tens h$ is equal to $(f\tens
h)+(g\tens h)$.  So the same remains true when we precompose both with
$\phi_{a,b}^{-1}$.  

Associativity follows, by an easy argument, from the fact that diagram
(\ref{eq:assoc}) is commutative.  The fact that $\phi_{a,b}$ equals the
unital isomorphism when $a$ or $b$ is zero implies that the identity 
element $\id_S\in R_0$ is a unit for $R$.

For the centrality of $R_0$,
let $f\colon S\ra S$ and let $g\colon \power{X}{a}\ra S$. 
The following
diagram is commutative:
\[ \xymatrixrowsep{1pc}\xymatrix{
& \power{X}{a}\tens S \ar[dd]^t\ar[r]^{g\tens f}
 & S\tens S \ar[dd]^t\ar[dr]^\iso \\
\power{X}{a}\ar[ur]^\iso\ar[dr]_\iso &&& S \\
& S\tens \power{X}{a} \ar[r]_{f\tens g} & S\tens S \ar[ur]_\iso
}
\]
The composition across the top is $f\cdot g$, and 
across the bottom is $g\cdot f$.  
\end{proof}

\begin{remark}
This section serves as the prototype for what will happen in the rest
of the paper.  In the case where the objects $X_i$ are invertible
under the tensor product, we wish to extend $R$ to a $\Z^n$-graded
ring.  This requires extending the construction of the $\phi_{a,b}$'s,
which in turn depends on a more sophisticated version of coherence.
\end{remark}

\section{Kelly-Laplaza coherence}
\label{se:KL}
Let $\cU$ be a set.  Kelly and Laplaza \cite{KL} describe the ``free symmetric
monoidal category with left duals'' on the set $\cU$, denoted here
$\KL(\cU)$.  In this section we review this
construction.

\medskip
\subsection{Preliminaries}
\label{se:KL-pre}
Let $(\cC,\tens,S)$ be a symmetric monoidal category, and let $X$ be
an object.  Recall that a \dfn{left dual} for $X$ is an object $Y$
together with maps $\alpha\colon S \ra Y\tens X$ and
$\hat{\alpha}\colon X\tens Y \ra S$ such that the composites
\[ X=X\tens S \llra{\id\tens \alpha} X\tens Y \tens X
\llra{\hat{\alpha}\tens \id} S\tens X=X \]
and
\[ Y=S\tens Y \llra{\alpha\tens \id} Y\tens X\tens Y \llra{\id\tens
\hat{\alpha}} Y\tens S=Y
\]
are the respective identities (we are not bothering to write the
associativity isomorphisms in the composites, even though they are
there).  To give an object $Z$ the structure of a left dual of $X$ is
the same as giving the functor $Z\tens (\blank)$ the structure of a
right adjoint to $X\tens (\blank)$.  This observation makes it clear
that if $(Y,\alpha,\hat{\alpha})$ and $(Y',\alpha',\hat{\alpha}')$ are
both left duals for $X$ then there is a unique isomorphism $Y\ra Y'$
that is compatible with the extra structure.

\begin{defn}
A \dfn{symmetric monoidal category with left duals} is a symmetric monoidal
category $(\cC,\tens,S)$ together with an assignment $X\mapsto
(X^*,\alpha_X,\hat{\alpha}_X)$ that equips every object of $\cC$ with a
left dual. 
 (Warning: Note that $(X^*)^*$ need not equal $X$,
although they will be isomorphic).  
\end{defn}

In the above setting, there is a unique way of making $X\ra X^*$ into
a contravariant functor.  This is not included as part of the
definition only to minimize the number of things that need to be
checked in applications.  We will not need the functoriality of duals.

It is easy to see that any closed symmetric monoidal category has left duals.

Suppose $X$ has a left dual and $f\colon X\ra X$ is a map.  Then we
may form the composite
\[ S \llra{\alpha} X^*\tens X \llra{\id\tens f} X^*\tens X \llra{t} X\tens X^* \llra{\hat{\alpha}} S,
\]
and this composite is called the \dfn{trace} of $f$.  The uniqueness
of left duals (up to isomorphism) shows that $\tr(f)$ is not dependent on the
choice of left dual for $X$.

The trace satisfies the following properties:

\begin{prop} Let $\cC$ be a symmetric monoidal category with left duals.
\label{pr:trace}
\begin{enumerate}[(a)]
\item If there is a commutative diagram
\[ \xymatrix{X\ar[r]^q\ar[d]_f & Z\ar[d]^g \\
X\ar[r]^q & Z
}
\]
in which $q$ is an isomorphism, then $\tr(f)=\tr(g)$.
\item If $f\colon X\ra Y$ and $g\colon Y\ra X$ then $\tr(fg)=\tr(gf)$.  
\end{enumerate}
\end{prop}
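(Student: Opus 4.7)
For part (a), the key observation is that left duals can be transported along isomorphisms. Specifically, I claim that given a left dual $(Y,\alpha,\hat{\alpha})$ for $X$ and an isomorphism $q\colon X\to Z$, the triple
\[ \bigl(Y,\; (1_Y\tens q)\circ\alpha,\; \hat\alpha\circ(q^{-1}\tens 1_Y)\bigr)\]
is a left dual for $Z$. This is a direct check of the two triangle identities: the $q$ and $q^{-1}$ cancel after the internal triangle identity for $(Y,\alpha,\hat\alpha)$ is applied. Since $\tr(g)$ is independent of the chosen left dual for $Z$, I would compute it using this transported dual. Expanding the definition and using $g=qfq^{-1}$, the composite becomes $\hat\alpha\circ(q^{-1}\tens 1)\circ t\circ(1\tens g)\circ(1\tens q)\circ\alpha$; writing $(1\tens g)\circ(1\tens q)=(1\tens q)\circ(1\tens f)$ and then pushing $q$ through $t$ via the naturality of the twist causes $q^{-1}\circ q$ to collapse, leaving precisely the composite defining $\tr(f)$.

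For part (b), I would use a ``sliding around the trace loop'' argument. The cleanest intermediate object is the composite
\begin{equation*}
S \xrightarrow{\alpha_X} X^*\tens X \xrightarrow{1\tens f} X^*\tens Y \xrightarrow{t} Y\tens X^* \xrightarrow{g\tens 1} X\tens X^* \xrightarrow{\hat\alpha_X} S,
\end{equation*}
which, by the naturality identity $(g\tens 1)\circ t = t\circ(1\tens g)$, collapses to $\tr(gf)$. I would then show that this same composite equals $\tr(fg)$ by inserting a ``dummy'' pair $(\alpha_Y,\hat\alpha_Y)$ (which is the identity on $Y$ by the triangle identity) between $f$ and $g$, and re-routing the composite so that the outer loop is built from $Y^*$ and $Y$ rather than $X^*$ and $X$; the rearrangement is driven entirely by associativity, unitality, naturality of $t$, and the triangle identity.

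The main obstacle is the bookkeeping of associativity, unit, and twist isomorphisms hidden inside the tensor products. This is precisely where the Kelly--Laplaza coherence theorem reviewed in the body of this section earns its keep: both $\tr(fg)$ and $\tr(gf)$ arise as the evaluation, in $\cC$, of a single closed ``figure-eight'' diagram in the free symmetric monoidal category with left duals on the two generators $f\colon X\to Y$ and $g\colon Y\to X$, so coherence identifies them without any need to track the individual coherence isomorphisms by hand.
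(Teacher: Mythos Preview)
Your treatment of part~(a) is correct and matches the paper exactly: transport the left dual of $X$ along the isomorphism $q$ to obtain a left dual for $Z$, then compute $\tr(g)$ using this transported dual and watch the $q$'s cancel. The paper writes down precisely the same transported structure $(X^*,\,(1\tens q)\alpha,\,\hat\alpha(q^{-1}\tens 1))$ and leaves the remaining verification as an easy exercise; you have carried that exercise out.

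For part~(b) the paper does not give a proof at all: it notes that (b) is not used anywhere in the paper and simply cites \cite[Proposition~2.4]{PS}. Your direct ``sliding'' sketch---introducing the intermediate composite through $X^*\tens Y$ and then inserting a copy of the $Y$-triangle identity to re-route the loop through $Y^*$---is in the right spirit and is essentially the standard argument found there.

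However, your proposed shortcut via Kelly--Laplaza coherence is circular. As the paper itself points out in Remark~\ref{re:KL-general}, when the generating category $\cA$ is not discrete the formal trace monoid appearing in $\KL(\cA)$ is \emph{defined} so that formal traces already satisfy the cyclic identity of part~(b). Consequently the very existence of the comparison functor $\KL(\cA)\to\cC$ presupposes that (b) holds in $\cC$. Your ``single closed figure-eight diagram'' is not a single morphism in $\KL(\cA)$ prior to imposing that relation: $\tr(gf)$ is the formal trace of the self-map $gf\colon X\to X$ while $\tr(fg)$ is the formal trace of $fg\colon Y\to Y$, and it is precisely the cyclic relation in the formal trace monoid that identifies them. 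So the bookkeeping you hoped to offload onto coherence is the actual content of (b), and must be done by hand (as in \cite{PS}) rather than by appeal to the Kelly--Laplaza theorem.
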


\begin{proof}
For part (a), observe that if $(X^*,\alpha,\hat{\alpha})$ is a  left dual for $X$ then
$X^*$ is also a left dual for $Z$ via the maps $\alpha'=(\id_{X^*}\tens
q) \alpha$ and $\hat{\alpha}'=\hat{\alpha}(q^{-1}\tens \id_{X^*})$.
Using this, (a) is an easy exercise.

Part (b) is much harder.  It is not needed in the present paper, but
included for expository purposes.  For a proof, see \cite[Proposition 2.4]{PS}.
\end{proof}

We will also need the following fundamental result:

\begin{lemma}
\label{le:aut-ab}
In a symmetric monoidal category the monoid $\End(S)$ is abelian.
\end{lemma}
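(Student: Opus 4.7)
The plan is to apply the classical Eckmann--Hilton argument to $\End(S)$ equipped with two binary operations. First I would introduce, alongside composition, a second operation $*$ defined by
\[ f * g := \lambda \circ (f \tens g) \circ \lambda^{-1}, \]
where $\lambda\colon S\tens S \llra{\iso} S$ is the unital isomorphism. (A standard consequence of MacLane's coherence is that the left and right unital isomorphisms $S\tens S \ra S$ coincide on $S$, so there is no ambiguity about which to pick.)

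Next I would verify two formal properties. First, that $\id_S$ is a two-sided identity for $*$: naturality of $\lambda$ applied to any $f\colon S\ra S$ yields $\lambda \circ (\id_S\tens f) = f\circ \lambda$, and the analogous identity on the right (using $\rho_S = \lambda_S$) gives $\lambda \circ (f\tens \id_S) = f\circ \lambda$. Hence $\id_S * f = f = f*\id_S$. Second, the interchange law
\[ (a * b) \circ (c * d) = (a \circ c) * (b \circ d), \]
which is an immediate consequence of the bifunctoriality of $\tens$, namely $(a\tens b)\circ (c\tens d) = (a\circ c)\tens (b\circ d)$.

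With these in hand, I would conclude by the Eckmann--Hilton calculation. For $f, g\in \End(S)$, the interchange law together with the identity property gives
\[ f\circ g \;=\; (f*\id_S)\circ(\id_S*g) \;=\; (f\circ \id_S)*(\id_S\circ g) \;=\; f*g, \]
and, swapping the order of the factors in the first line,
\[ g\circ f \;=\; (\id_S*g)\circ(f*\id_S) \;=\; (\id_S\circ f)*(g\circ \id_S) \;=\; f*g. \]
Therefore $f\circ g = g\circ f$. The only nontrivial ingredient is the coherence fact $\lambda_S = \rho_S$ used in verifying that $f*\id_S = f$; everything else is a purely formal manipulation of bifunctoriality and naturality, so this minor appeal to coherence is the only place I anticipate any subtlety at all.
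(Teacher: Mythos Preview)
Your proof is correct and is essentially the same as the paper's: the paper draws the commutative square showing $gf = \lambda\circ(f\tens g)\circ\lambda^{-1}$ and then the analogous one for $fg$, which is exactly the Eckmann--Hilton argument you have written out (your operation $*$ is precisely the ``middle'' composite the paper identifies). The only difference is that you name the argument and isolate the coherence fact $\lambda_S=\rho_S$ explicitly, whereas the paper uses it implicitly in asserting commutativity of the squares.
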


\begin{proof}
Suppose $f,g\colon S\ra S$ and consider the following commutative
diagram:
\[ \xymatrix{
S\tens S \ar[r]^{f\tens \id}\ar[d] & S\tens S \ar[r]^{\id\tens g}\ar[d]
& 
S\tens S \ar[d] \\
S \ar[r]^f & S \ar[r]^g & S.
}
\]
This shows that $gf$ is the composite
\[ S\llra{\iso} S\tens S \llra{f\tens g} S \llra{\iso} S.
\]
A similar diagram also shows that $fg$ is also equal to this
composite, so $gf=fg$. 
\end{proof}

\subsection{The construction}
For any category $\cU$ the paper \cite{KL} constructs the free
symmetric monoidal category with left duals on $\cU$.   We will only
need this construction where $\cU$ only has identity maps---i.e.,
$\cU$ is just a set.  See Remark~\ref{re:KL-general}, however, for hints about the
general case.

Note that for every element
$X\in \cU$ our category must have an identity map $\id_X$ and
therefore a self-map of $S$ obtained by taking the trace.  These
traces will all need to commute, since all self-maps of $S$
commute by Lemma~\ref{le:aut-ab}.  
So 
let $\N\langle \cU\rangle$ be the free commutative monoid on the set
$\cU$.  If $X\in \cU$ we think of the element $[X]\in \N\langle
\cU\rangle$ as the formal trace of the identity map on $X$.  Our construction for $\KL(\cU)$ will have $\N\langle
\cU\rangle$ as its set of self-maps of $S$.

Define a \dfn{signed set} to be a set $A$ together with a
function $\tau\colon A\ra \{+,-\}$.  If $A$ is a signed set let $A^*$
be the same set but with the signs reversed.  If $A$ and $B$ are
signed sets then $A\amalg B$ denotes the disjoint union with the
evident signs.  A \dfn{bipartition} of a signed set $A$ is a
directed graph with $A$ as the vertex set, having the properties that
\begin{enumerate}[(i)]
\item The tail of every edge is marked with $-$ and the head of every
edge is marked with $+$;
\item Every element of $A$ is a vertex of exactly one edge.  
\end{enumerate}
If $A$ and $B$ are signed sets, then a \dfn{correspondence} from $A$ to $B$
is a bipartition of $A^*\amalg B$.  One can make a picture of such a
thing by drawing the elements of $A$ on one ``level'', the elements of
$B$ on a lower level, and then drawing the edges of the bipartition.
For example:
\[ \xymatrix{
A: & + \ar[dr] & +\ar[dl] & + \ar@/_3ex/[r]  & -  & - \\
B: & +  & +  & -\ar[urr]  & -\ar@/^3ex/[r] & +
}
\]
Note the convention for drawing edges with vertices at the same level:
if the vertices are on the top level we draw a cup $\cup$, and if the
vertices are on the bottom level we draw a cap $\cap$.  Also, there is
a simple technique for getting the direction of the arrows straight: each
element with sign $+$ should be pictured as a small downward arrow
$\downarrow$, and elements with sign $-$ are pictured as small upward
arrows $\uparrow$.  These small arrows must join (compatibly) with the
edges in the correspondence.   Finally, note again that the data in these
pictures is really just ``what connects to what''.  The exact physical
paths of the arrows in the picture are irrelevant, only where the
arrows begin and end.  

Given a correspondence  from $A$ to $B$, and a correspondence
 from $B$ to $C$, we may compose these to get a correspondence
from $A$ to $C$.  This is best described in terms of the pictures: one
stacks the pictures on top of each other and composes the edges
head-to-tail as expected.  Note that there might be extra ``loops'' in
the picture, and these must be discarded.  For example, the composition
\[ 
\xymatrix{
A: & + \ar[dr] & +\ar[dl] & + \ar@/_3ex/[r]  & -  & - \\
B: & +\ar[dr]  & +\ar@/_3ex/[r]  & -\ar[urr]  
& -\ar@/^3ex/[r]\ar@/_8ex/[rrr] & + \ar@/_3ex/[r]  & -\ar@/^3ex/[r] & +\\
C: & & + 
} 
\]
equals the correspondence
\[
\xymatrix{
A: & + \ar@/_6ex/[rrrr] & +\ar[d] & + \ar@/_3ex/[r]  & -  & - \\
C: & & + 
} 
\]

\medskip

We are ready to define the category $\KL(\cU)$.  
An object will be a formal word made from the set $\cU$ and the special
symbol $S$ using tensors and duals: e.g., $w=((X^*)^*\tens S)\tens ((X\tens
Y)^*\tens (Y^*\tens Z))$.  To each such word we associate its
underlying set of letters $P(w)$, together with a function
$\tau\colon P(w)\ra \{+,-\}$.  In the above example
$P(w)=\{X_1,X_2,Y_1,Y_2,Z\}$ (the indices distinguish the different
occurrences of the letters in the word) and the sign function has
$\tau^{-1}(+)=\{X_1,Z\}$, $\tau^{-1}(-)=\{X_2,Y_1,Y_2\}$.  In general
$P$ is defined inductively by setting $P(X)=\{X\}$ if
$X\in \cU$, $P(S)=\emptyset$, $P(u\tens
v)=P(u)\amalg P(v)$, and $P(u^*)=P(u)^*$.  
Note that there is an evident map $P(w)\ra \cU$  that sends each
formal symbol to the corresponding element of $\cU$.  

Let $w_1$ and $w_2$ be two formal words.  We define a map from $w_1$
to $w_2$ to be
a pair $(\theta,\lambda)$ where $\theta$ is a bipartition of
$P(w_1)^*\amalg P(w_2)$ for which the head and tail of every edge are
sent to the same object of $\cU$, and where $\lambda$ is an element of
$\N\langle \cU \rangle$.
Given a map $(\theta,\lambda)\colon w_1\ra w_2$ and $(\phi,\mu)\colon
w_2\ra w_3$, the composite is $(\phi\theta,\lambda+\mu+\sum_i [X_i])$
where $\phi\theta$ is the composition of correspondences and where the
$X_i$'s are the objects labelling each of the loops that was discarded
during the composition process.  Said differently, every loop in which
the vertices were labelled by an object $X\in \cU$ contributes a
factor of $\tr(\id_X)=[X]$ to the composition.

We may again depict maps in $\KL(\cU)$ via pictures.  For
example, here is a map from $(X^*)^*\tens (X\tens Y)^* \tens (Z\tens
Y)$ to $Z$:
\[\xymatrix{
X_+\ar@/_3ex/[r] & X_- & Y_- & Z_+\ar[dl] & Y_+\ar@/^3ex/[ll] \\
&& Z_+
}
\]  
Note that the precise word forming the domain (or codomain) of the map
is not retrievable from the picture; that is, the picture only shows
$P(w)$ together with a linear ordering, not $w$ itself.  This is
actually a feature rather than a bug!   If two words differ only in
the placement of parentheses, for example, notice that there is a
canonical isomorphism between them.  Similarly, observe that
$(w^*)^*$ is canonically isomorphic to $w$, and $(w_1\tens w_2)^*$ is
canonically isomorphic to $w_1^*\tens w_2^*$.

The category $\KL(\cU)$ is a symmetric monoidal category with left
duals.  We leave the reader the (not difficult, but informative)
exercise of checking this and identifying the necessary structures.  
The main result of \cite{KL} is the following:

\begin{thm}[Kelly-Laplaza Coherence Theorem]
The category $\KL(\cU)$ is the free symmetric monoidal category with
left duals on the set $\cU$.  
\end{thm}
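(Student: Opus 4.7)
The plan is to verify that $\KL(\cU)$ satisfies the appropriate universal property: for any symmetric monoidal category $(\cD,\tens,S_\cD)$ with a chosen system of left duals and any function $\psi\colon \cU\to \ob(\cD)$, there is a symmetric monoidal functor $F_\psi\colon \KL(\cU)\to \cD$ preserving duals and extending $\psi$, unique up to unique monoidal isomorphism. I would first define $F_\psi$ on objects by structural induction on formal words: $F_\psi(S)=S_\cD$, $F_\psi(X)=\psi(X)$ for $X\in\cU$, $F_\psi(u\tens v)=F_\psi(u)\tens F_\psi(v)$, and $F_\psi(u^*)=F_\psi(u)^*$ using the chosen duals of $\cD$.

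To define $F_\psi$ on a morphism $(\theta,\lambda)\colon w_1\to w_2$, I would exhibit the picture of $\theta$ as a vertical composite of elementary layers: at each horizontal slice only one of associativity, unitality, a single symmetry $t$, a single unit $\alpha$, or a single counit $\hat{\alpha}$ occurs (tensored with identities on the remaining strands). This composite is then interpreted inside $\cD$ and postcomposed with the element of $\End(S_\cD)$ obtained by sending $[X]\in \N\langle\cU\rangle$ to $\tr(\id_{\psi(X)})$ and extending $\N$-linearly; this is well defined because $\End(S_\cD)$ is commutative by Lemma~\ref{le:aut-ab}. Once $F_\psi$ is defined on morphisms, functoriality amounts to the fact that vertical stacking of correspondences matches composition in $\cD$, with each discarded loop labelled by $X$ contributing exactly the trace $\tr(\id_{\psi(X)})=[X]$ by the very definition of trace in Section~\ref{se:KL-pre}. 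That $F_\psi$ is symmetric monoidal with duals is then visible from the construction, and uniqueness follows because the listed elementary pieces generate all morphisms and their images are forced by the properties of a dual-preserving symmetric monoidal functor.

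The main obstacle, and the only genuinely non-formal step, is the well-definedness of $F_\psi$ on morphisms: different slicings of the same pictured correspondence $\theta$ must yield the same composite in $\cD$. I would handle this by a normal-form argument. First, isotope the picture to a canonical presentation — for example, all $\alpha$'s at the top, a symmetric-group braiding pattern in the middle, and all $\hat{\alpha}$'s at the bottom — and show that any two slicings of the same $\theta$ are connected by a finite sequence of local moves of four types: (i) a MacLane-type rearrangement inside the purely symmetric monoidal layer, (ii) one of the two triangle identities that define a left dual, (iii) a ``yank'' that cancels a $\hat{\alpha}\circ t\circ\alpha$ loop against creation of the corresponding trace factor, and (iv) naturality of the symmetry with respect to $\alpha$ and $\hat{\alpha}$. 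Move (i) is invariant in $\cD$ by MacLane's coherence theorem as used in Section~\ref{se:MacLane}, moves (ii) and (iv) by the defining axioms of a left dual and naturality of $t$, and move (iii) by inspection against the definition of trace. Verifying that these moves suffice to connect any two decompositions is the combinatorial heart of the proof and is where the careful bookkeeping — the reason Kelly and Laplaza introduced the bipartition formalism in the first place — is essential.
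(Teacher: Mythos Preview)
The paper does not give its own proof of this theorem: it simply states the result and attributes it to \cite{KL} (``The main result of \cite{KL} is the following''). So there is no paper proof to compare against; your sketch stands on its own and is broadly along the right lines---define $F_\psi$ by structural recursion on objects, decompose a correspondence into elementary layers, and check invariance under the relevant local moves.

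Two comments on the sketch itself. First, your move (iii) is misplaced. In showing that two slicings of a \emph{fixed} morphism $(\theta,\lambda)$ yield the same composite in $\cD$, no closed loops can appear: $\theta$ is a perfect matching on $P(w_1)^*\amalg P(w_2)$ and any slicing of its picture has the same set of open strands at every height. Loop removal and trace creation are relevant only for \emph{functoriality}, i.e.\ for verifying that $F_\psi$ respects the composition law in $\KL(\cU)$ (where composing two correspondences discards loops and adds their labels to $\lambda$). So (iii) belongs in your functoriality check, not in your well-definedness argument. Second, your claimed normal form (``all $\alpha$'s, then a braid, then all $\hat{\alpha}$'s'') is correct once you fix the direction convention, but proving that your four move-types suffice to connect arbitrary slicings is genuinely the hard combinatorial core, and you have only asserted it. The actual argument in \cite{KL} does not proceed by moves at all: it uses Kelly's abstract ``club'' machinery for coherence problems, which sidesteps the direct combinatorics. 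A moves-based proof is possible (and is essentially the graphical calculus for compact closed categories developed later by Joyal--Street and others), but if you pursue it you should expect to have to verify a full and carefully stated list of string-diagram relations, not just the informal four you name.
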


\begin{remark}
\label{re:KL-general}
The paper \cite{KL} actually describes the free symmetric monoidal
category with left duals on a {\it category\/} $\cA$.  We have only
discussed the case where $\cA$ is discrete (i.e., only has identity
maps) because this is all we need for our present purposes.  The
general case is not very different, however.  A map in $\KL(\cA)$ is a
correspondence equipped with a labelling of the edges by maps in
$\cA$, having the property that if an edge has head $Y$ and tail $X$
then the label belongs to $\Hom_\cA(X,Y)$.  When composing labelled
correspondences one composes the labels in the evident manner.
Finally, the monoid of formal traces $\N\langle \cU\rangle$ must be
replaced by something more complex: every self-map in $\cA$ must have
a formal trace, and these must satisfy the cyclic property of
Proposition~\ref{pr:trace}(b).  It is easy to write down the
universal monoid  having these properties; see \cite{KL} for details.
\end{remark}

\subsection{Uses of coherence}
Now let $\cC$ be a symmetric monoidal category with left duals.  Let $\cU\subseteq
\ob(\cC)$ be a set of objects.  
The
Kelly-Laplaza Coherence Theorem says that there is a map of
symmetric monoidal categories
$F\colon \KL(\cU) \ra \cC$ sending the formal word $[X]$ to $X$, and
the formal word $[X]^*$ to $X^*$, for
each $X\in \cU$.  The functor $F$ also has the following behavior:

\begin{align*}
 &\xymatrix{ X_- \ar@/^3ex/[r] & X_+} \quad \text{is sent to} \quad 
S\llra{\alpha} X^*\tens X\\[0.2in]
 &\xymatrix{ X_+ & X_-\ar@/_3ex/[l] } \quad \text{is sent to} \quad
S\llra{\alpha} X^*\tens X\llra{t} X\tens X^*\\[0.2in]
 &\xymatrix{ X_+ \ar@/_3ex/[r] & X_-} \quad \text{is sent to} \quad 
X\tens X^* \llra{\hat{\alpha}} S\\[0.2in]
 &\xymatrix{ X_- & X_+\ar@/^3ex/[l] } \quad \text{is sent to} \quad
X^*\tens X \llra{t} X\tens X^* \llra{\hat{\alpha}} S.
\end{align*}

\vspace{0.1in}

\noindent
[Note: It is worth taking time to think about the second and fourth
cases; these composites are also represented by other pictures in which
there is a crossing between the edges, but in $\KL(\{\cU\})$ such pictures represent the
same maps as what we have given---remember that the only part of the
pictures that matters is ``what connects to what''.]

As an example, suppose that $X$ and $Y$ are dualizable objects in
$\cC$ with chosen duals $X^*$ and $Y^*$.  Consider the following two
maps from $X\tens Y\tens X^*\tens Y^*\tens Y$ to $Y$:
\begin{myequation}
\label{eq:comp1} XYX^* Y^* Y \ra YXX^* Y^* Y \ra YSY^*Y=YY^*Y\ra SY=Y 
\end{myequation}
and
\begin{myequation}
\label{eq:comp2}
 XYX^*Y^*Y \ra XX^* YY^*Y \ra SYS =Y
\end{myequation}
(we have suppressed the tensor symbols and associativity maps; each of
the displayed maps is the evident one that uses the symmetric monoidal structure
and the duality maps).  
Are the maps in (\ref{eq:comp1}) and (\ref{eq:comp2}) 
guaranteed to be the same in $\cC$?  We work in
$\KL(\{X,Y\})$ and note that the two composites are represented by the
following pictures:
\[ \xymatrixrowsep{1pc}\xymatrixcolsep{1pc}\xymatrix{
X_+ \ar[dr] & Y_+\ar[dl] & X_- & Y_- & Y_+\ar[d] &  
&& X_+\ar[d] & Y_+\ar[dr] & X_- & Y_- & Y_+\ar[d] \\
\bullet\ar[d] & \bullet \ar@/_3ex/[r] & \bullet\ar[u] & \bullet\ar[u]
& \bullet\ar[d] 
&&& \bullet \ar@/_3ex/[r]&\bullet\ar[ur] & \bullet\ar[d] &\bullet\ar[u]
&\bullet\ar@/^3ex/[l]
\\
\bullet\ar@/_4ex/[rrr] &  &  & \bullet\ar[u] & \bullet\ar[d] 
&&& &&\bullet
\\
&&&&\bullet 
}
\]
The composite pictures are clearly not the same map in $\KL(\cU)$, and
so there is no guarantee that the two maps are the same in $\cC$.
They might be the same, but if so this is an ``accident''---it does
not follow from the basic axioms.

As one more example, let us consider the following composite:
\[\xymatrix{ S =SSS \ar[r] &  X^*X Y^*Y XX^* \ar[r] & X^* Y^* X X Y
X^* \ar[r]^{t_{X,X}} & X^* Y^* XX YX^*\ar[d] \\
&& S= SSS & X^* XY^* YXX^*\ar[l]
}
\]
Note that in the third map we have omitted the identity factors on
either side of the $t_{X,X}$, due to limitations of space.  All of the
other maps are the evident ones.  We claim that the composite can be
given a simpler description.  
Computing in $\KL(\{X,Y\})$ we get the following picture:

\[ \xymatrixcolsep{1.3pc}\xymatrixrowsep{0.9pc}\xymatrix{
{\scriptstyle{X}}\bullet\ar@/^3ex/[r] & \bullet\ar[dr] &
\bullet{\scriptstyle{Y}}\ar@/^3ex/[r]
& \bullet\ar[dr] & \bullet\ar[dl] & \bullet{\scriptstyle{X}}\ar@/_3ex/[l] \\
\bullet \ar[u] & \bullet\ar[ur] &\bullet\ar[dr] &
\bullet\ar[dl]  & \bullet\ar[d] &\bullet\ar[u] \\
\bullet\ar[u] & \bullet\ar[u] &\bullet\ar[dl] &
\bullet\ar[dr] & \bullet\ar[dl] &\bullet\ar[u] \\
\bullet\ar[u] & \bullet\ar@/^3ex/[l] &\bullet\ar[ul] &
\bullet\ar@/^3ex/[l] & \bullet\ar@/_3ex/[r] &\bullet\ar[u]
}
\]

\vspace{0.3in}

\noindent
This picture breaks up into two loops, one where the vertices are all
labelled by $X$ and the other where they are all labelled by $Y$.  As
a map from $S$ to $S$ in $\KL(\{X,Y\})$  this composite is therefore
equal to $\tr(\id_X)\circ \tr(\id_Y)$ (note that the order of
composition does not matter, since $\Hom(S,S)$ is commutative).  Since
this identity holds in the universal example $\KL(\{X,Y\})$, it also
holds in $\cC$.

\begin{remark}[Traces in Kelly-Laplaza categories]
Let $w$ be an object in $\KL(\{\cU\})$ and let $f\colon w\ra w$ be a
map.  Then $\tr(f)$ is a map $S\ra S$ in $\KL(\{\cU\})$.  We leave it
as an easy exercise to verify that $\tr(f)$ is represented by the
following picture:

\begin{picture}(300,80)
\put(150,0){\includegraphics[scale=0.25]{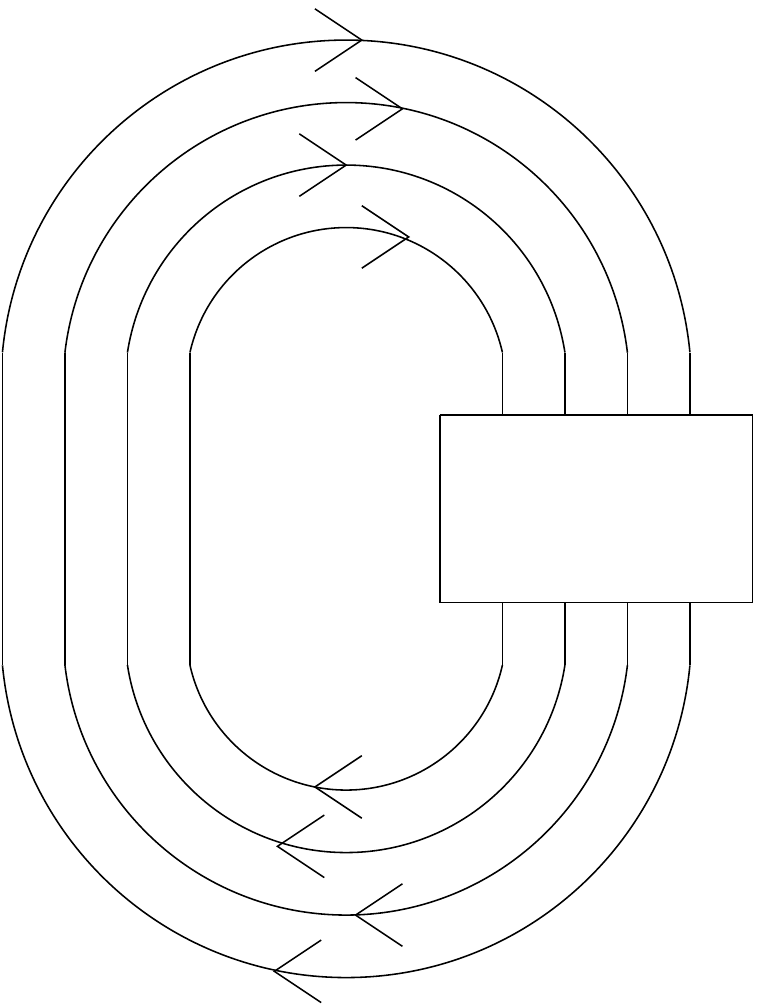}}
\put(190,33){$f$}
\end{picture}

\noindent
(where the picture representing $f$ should be inserted into the
blank box).  
\end{remark}

\section{Invertible objects}
\label{se:inv}
In this section we review the notion of invertible object in a
symmetric monoidal category, and establish some of their properties.  
For every invertible object $X$ we define a {\it basic commuter\/}
$\tau_X$, which is an isomorphism $\tau_X\colon S\ra S$ such that
$\tau_X^2=1$.  

\medskip

\subsection{Prelude}
Throughout this section $(\cC,\tens,S)$ is a symmetric monoidal
category.
If $u\colon S\ra S$ and $g\colon A\ra B$ then denote the composites
\[ A\iso S\tens A\llra{u\tens g} S\tens B \iso B \quad\text{and}\quad
A\iso A\tens S\llra{g\tens u} B\tens S\iso B
\]
by $u\ctens g$ and $g\ctens u$.  We will sometimes omit the carat and
just write $u\tens g$ and $g\tens u$ by abuse, but at other times it
is useful
to remember that $u\ctens g$ and $u\tens g$ are somewhat different.  

\begin{lemma}
\label{le:ctens}
Let $u\colon S\ra S$ and $g\colon A\ra B$.  Then 
\[ u\ctens g=(u\ctens
\id_B)\circ g=g\circ(u\ctens \id_A)=g\ctens u.
\]
\end{lemma}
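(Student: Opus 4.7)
The plan is to prove all four expressions equal by reducing everything to the definition of $u\ctens g$ and applying bifunctoriality of $\tens$ plus naturality of the unit isomorphisms. Let me write $\lambda_A\colon S\tens A\iso A$ and $\rho_A\colon A\tens S\iso A$ for the left and right unitors, so that by definition $u\ctens g=\lambda_B\circ(u\tens g)\circ\lambda_A^{-1}$ and $g\ctens u=\rho_B\circ(g\tens u)\circ\rho_A^{-1}$.

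First I would establish the inner two equalities $u\ctens g=(u\ctens \id_B)\circ g=g\circ(u\ctens\id_A)$. The only ingredients are (a) bifunctoriality of $\tens$, which gives
\[ u\tens g=(\id_S\tens g)\circ(u\tens\id_A)=(u\tens\id_B)\circ(\id_S\tens g), \]
and (b) naturality of the left unitor, which gives $g\circ\lambda_A=\lambda_B\circ(\id_S\tens g)$. Threading these through the definition of $u\ctens g$ produces both equalities immediately. The analogous argument with $\rho$ in place of $\lambda$ shows $g\ctens u=(\id_B\ctens u)\circ g=g\circ(\id_A\ctens u)$.

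For the final equality $u\ctens g=g\ctens u$, it therefore suffices to prove $u\ctens\id_A=\id_A\ctens u$ for every object $A$, since both composites can then be written as $g$ sandwiched between these two equal endomorphisms (on $A$ and on $B$). Here I would invoke the standard symmetric-monoidal coherence identity $\rho_A=\lambda_A\circ t_{A,S}$, together with naturality of the twist:
\[ t_{A,S}\circ(\id_A\tens u)=(u\tens\id_A)\circ t_{A,S}. \]
Substituting into $\id_A\ctens u=\rho_A\circ(\id_A\tens u)\circ\rho_A^{-1}$ and replacing each $\rho_A$ by $\lambda_A\circ t_{A,S}$ lets the twists cancel, leaving $\lambda_A\circ(u\tens\id_A)\circ\lambda_A^{-1}=u\ctens\id_A$.

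The main obstacle, such as it is, is keeping the symmetric-monoidal bookkeeping straight—specifically citing the correct MacLane coherence identity relating the left unitor, right unitor, and twist on the factor $S$. Once that identity is in hand, everything else is formal naturality. Since all of this is a direct consequence of MacLane coherence (reviewed in Section~\ref{se:MacLane}), the proof should be quite short.
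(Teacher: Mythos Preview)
Your proof is correct and follows essentially the same route as the paper: both rely on bifunctoriality of $\tens$ to factor $u\tens g$ and $g\tens u$, together with naturality of the unitors. The paper's proof is a one-liner that leaves the outermost equality $u\ctens g=g\ctens u$ implicit, whereas you spell it out by reducing to $u\ctens\id_A=\id_A\ctens u$ and invoking the coherence identity $\rho_A=\lambda_A\circ t_{A,S}$; this is a reasonable way to justify what the paper simply calls ``elementary.''
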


\begin{proof}
This is elementary, using that $u\tens g=(\id_S\tens g)\circ
(u\tens\id_A)=(u\tens \id_B)\circ (\id_S\tens g)$ and similarly for
$g\tens u$.  
\end{proof}

\begin{remark}
\label{re:move-around}
The above lemma will often be used in the following way.  Suppose that
$g\colon A\ra B$ and $f\colon B\ra C$.  Then multiple applications of
the lemma give
\[ (u\ctens f)\circ g=f\circ (u\ctens \id_B)\circ g=f\circ(u\ctens g)=f\circ g\circ
(u\ctens \id_A)=u\ctens(fg).
\]
So we can move a $u\ctens(\blank)$ from anywhere inside a composite to
anywhere else, including outside the composite.  
\end{remark}

\begin{remark}
\label{re:endo}
Observe that for any object $V$ in $\cC$ we obtain a map of monoids
$\End(S)\ra \End(V)$ given by $u\mapsto u\ctens \id_V$.  
\end{remark}

\subsection{Invertible objects}
\begin{defn} An object $X$ in $\cC$ is \dfn{invertible} if there exists
  a $Y$ in $\cC$ and an isomorphism $\alpha\colon S\ra Y\tens X$.  
\end{defn}

It is easy to prove that if such a $Y$ exists then it is unique up to
isomorphism.  But to give an inverse for an object $X$, one must specify an object $Y$
together with the isomorphism $\alpha\colon Y\tens X \ra S$.  This map
$\alpha$ is not uniquely determined by $Y$, since one can clearly get
a different $\alpha$ by composing with an automorphism of $S$.  
Note that if $X$ and $Z$ are invertible then clearly so
is $X\tens Z$.  

We will often use the following observation:

\begin{prop}
\label{pr:inv-end}
If $X$ is an invertible object in $\cC$  then the canonical map $\End(S)\ra
\End(X)$ is an isomorphism of monoids.  
More generally, for any object $V$ of
$\cC$ the two maps $\End(V)\ra \End(V\tens X)$ and $\End(V)\ra \End(X\tens
V)$ (obtained by tensoring with identity maps) are both isomorphisms.
\end{prop}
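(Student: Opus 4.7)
The plan is to deduce the first statement from the second by setting $V=S$ and invoking the unital isomorphism $S\tens X\iso X$. So the real task is to prove the two maps in the second statement are bijections of monoids.

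The approach is to show that the functor $F\colon\cC\ra\cC$ defined by $F(V)=V\tens X$ is an equivalence of categories. Let $(Y,\alpha)$ be a chosen inverse for $X$ and let $\hat{\alpha}\colon X\tens Y\ra S$ be the isomorphism supplied by Proposition~\ref{pr:alpha-hat}. Setting $G(V)=V\tens Y$, one builds natural isomorphisms $GF\iso\id_\cC$ and $FG\iso\id_\cC$ from associators, unitors, and the isomorphisms $\hat{\alpha}$ and $\alpha^{-1}$; explicitly, $GF(V)=(V\tens X)\tens Y\iso V\tens(X\tens Y)\llra{\id\tens\hat{\alpha}} V\tens S\iso V$, with the analogous composite using $\alpha$ in the other direction. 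Since $F$ is an equivalence, it induces a bijection $\End(V)\ra\End(V\tens X)$, and functoriality upgrades this bijection to a monoid homomorphism because it preserves identities and satisfies $(f\tens\id_X)\circ(g\tens\id_X)=(fg)\tens\id_X$. The same argument applied to the functor $X\tens(\blank)$ handles the second map $\End(V)\ra\End(X\tens V)$.

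The step that is easiest to overlook is checking compatibility of the bijection with the monoid structure, but this is immediate from functoriality of $F$. The only genuine input is the existence of $\hat{\alpha}$ from Proposition~\ref{pr:alpha-hat}, which upgrades the one-sided invertibility data $(Y,\alpha)$ into the two-sided duality data needed for $F$ to be a self-equivalence; beyond this no substantive obstacle arises.
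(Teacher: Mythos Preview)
Your approach is essentially the same as the paper's: show that $V\mapsto V\tens X$ is an equivalence of categories and deduce the bijection on endomorphism monoids, then specialize to $V=S$.

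There is, however, a circularity in your argument as written. You invoke Proposition~\ref{pr:alpha-hat} to obtain $\hat{\alpha}$, but in the paper's development that proposition comes \emph{after} the present one, and its proof explicitly cites Proposition~\ref{pr:inv-end} (both in establishing that $F_X$ is a bijection on hom-sets, and in the final step deducing $f=\id_Y$). So you cannot appeal to it here.

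The fix is easy, and is implicitly what the paper does: you do not need $\hat{\alpha}$ at all. Since $\cC$ is symmetric, the twist $t\colon X\tens Y\ra Y\tens X$ composed with $\alpha^{-1}$ already gives an isomorphism $X\tens Y\iso S$, and that is all you need to exhibit $G(V)=V\tens Y$ as a quasi-inverse to $F$. The triangle identities encoded in $\hat{\alpha}$ are irrelevant for showing $F$ is an equivalence; you only need that both $X\tens Y$ and $Y\tens X$ are isomorphic to $S$. With this adjustment your proof matches the paper's.
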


\begin{proof}
Choose an inverse $(Y,\alpha)$ for $X$.
The functor $T_X\colon \cC\ra \cC$ given by $Z\mapsto Z\tens X$ is an
equivalence of categories, because an inverse is  given by
$W\mapsto W\tens Y$.  Since $T_X$ is an equivalence, for any $V$ in
$\cC$ the map $\End(V)\ra \End(T_X(V))$ is an isomorphism of monoids.
A similar argument shows $\End(V)\ra \End(X\tens V)$ to be an
isomorphism (or use the twist map $X\tens V\iso V\tens X$).  

When $V=S$ one has $T_X(S)\iso X$ via the unital isomorphism, and the
composite $\End(S)\ra \End(T_X(S))\iso \End(X)$ is readily checked to
be the map of Remark~\ref{re:endo}.
\end{proof}

When $X$ is invertible it will be useful to have a description of the
inverse to the isomorphism $\End(S)\ra \End(X)$.
If $(Y,\alpha)$ is a choice of inverse for $X$
and $g\colon X\ra X$, define $D_Y(g)$ to be the composite
\[ S \llra{\alpha} Y\tens X \llra{\id\tens g} Y\tens X
\llra{\alpha^{-1}} S.
\]
An easy diagram chase shows that $D_Y(u\ctens\id_X)=u$, for $u\in
\End(S)$.  Since $\End(S)\ra \End(X)$ is an isomorphism this verifies
that $D_Y$ is the inverse, and so in particular does not depend on the
choice of $(Y,\alpha)$.  From now on we will just write $D(g)$ rather
than $D_Y(g)$.  

The homomorphism $D\colon \End(X)\ra \End(S)$ is a bit like a trace,
but it does not coincide with the standard trace that exists for
dualizable objects as defined in  Section~\ref{se:KL-pre} (see
Remark~\ref{re:tr-D} for an explicit example).  The map $D$ can also be regarded
as something like a determinant---this analogy works well when $\cC$ is a category
of vector spaces or vector bundles, but of course in those cases the
determinant and trace are indistinguishable on one-dimensional
objects.  In the present paper we will just call $D(f)$ the
``$D$-invariant'' of the map $f\colon X\ra X$.  Like a determinant,
the $D$-invariant is multiplicative (being a homomorphism of monoids):
that is, 
$D(\id_X)=\id_S$ and $D(fg)=D(f)D(g)$.
Here are some further properties of the $D$-invariant:

\begin{lemma} Let $X$ and $Z$ be invertible objects.
\label{le:trace}
\begin{enumerate}[(a)]
\item Given a commutative diagram
\[ \xymatrix{
X \ar[d]_f\ar[r]^q & Z\ar[d]^h \\
X \ar[r]^q & Z
}
\] 
in which $q$ is an
isomorphism, 
one has $D(f)=D(h)$.  
\item If $f\colon X\ra X$, then $D(\id_Z\tens f)=D(f\tens
  \id_Z)=D(f)$.  
\item If $f\colon X\ra X$ and $g\colon Z\ra Z$ then $D(f\tens
  g)=D(f)D(g)$, where the product on the right-hand-side 
is in the monoid $\End(S)$.  
\end{enumerate}
\end{lemma}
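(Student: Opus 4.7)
The plan is to handle the three parts separately, with (a) being a direct calculation via transfer of inverse structure, (b) reducing to the observation that $\End(S)\ra \End(X)$ is an isomorphism whose inverse is $D$, and (c) reducing to (b) together with multiplicativity.

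For (a), I would start by choosing an inverse $(Y,\alpha)$ for $X$ and use the isomorphism $q$ to transfer it to an inverse for $Z$: set $\alpha' = (\id_Y \tens q)\circ \alpha$, so that $(Y,\alpha')$ is an inverse for $Z$. Since $D$ is defined independently of the chosen inverse, we may compute $D(h)$ using $\alpha'$. A direct unfolding gives
\[
D(h) = (\alpha')^{-1}\circ (\id_Y\tens h)\circ \alpha'
     = \alpha^{-1}\circ \bigl(\id_Y\tens (q^{-1}hq)\bigr)\circ \alpha,
\]
and the commutativity of the square says $q^{-1}hq = f$, so this equals $D(f)$.

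For (b), the key input is Proposition~\ref{pr:inv-end}: the map $\End(S)\ra \End(X)$, $u\mapsto u\ctens \id_X$, is a bijection, and $D$ is its inverse. So write $f = u\ctens \id_X$ with $u = D(f)\in \End(S)$. What I need is the identity
\[
\id_Z \tens (u\ctens \id_X) \;=\; u\ctens \id_{Z\tens X}
\]
as endomorphisms of $Z\tens X$, and the analogous identity for $f\tens \id_Z$. Both sides amount to inserting $u$ into a tensor word via the unit isomorphism at different positions, so the equality is an instance of MacLane coherence (Section~\ref{se:MacLane}); more concretely, one can verify it using Lemma~\ref{le:ctens} and Remark~\ref{re:move-around} to move the $u\ctens(\blank)$ past the identity factors. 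Applying $D$ and using that $D$ inverts $u\mapsto u\ctens \id_{Z\tens X}$ (since $Z\tens X$ is invertible) then gives $D(\id_Z\tens f)=u=D(f)$, and similarly $D(f\tens \id_Z)=D(f)$.

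For (c), I would factor
\[
f\tens g \;=\; (f\tens \id_Z)\circ (\id_X \tens g)
\]
in $\End(X\tens Z)$ and apply multiplicativity of $D$ together with part (b) twice, yielding $D(f\tens g)=D(f\tens \id_Z)\cdot D(\id_X\tens g)=D(f)\cdot D(g)$.

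The main obstacle is the coherence identity in (b): one needs to be careful that the two ways of inserting a self-map of $S$ into the tensor word $Z\tens X$ via the various unit isomorphisms really do produce the same endomorphism. I expect this to follow cleanly from MacLane's theorem (or, more elementarily, from repeated application of Lemma~\ref{le:ctens}), but it is the one step where the bookkeeping with unitors must be done honestly rather than suppressed.
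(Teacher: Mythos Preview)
Your proposal is correct and follows essentially the same route as the paper. For (a) the paper phrases things via the commuting triangle $\End(S)\to\End(X)\to\End(Z)$ (with the vertical map $f\mapsto qfq^{-1}$) rather than by transferring the inverse structure, but this is only a cosmetic difference; for (b) and (c) your argument matches the paper's almost verbatim, with your version making the coherence identity $(u\ctens\id_X)\tens\id_Z = u\ctens\id_{X\tens Z}$ explicit where the paper leaves it implicit in the phrase ``the composite $\End(S)\to\End(X)\to\End(X\tens Z)$.''
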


\begin{proof}
All of the parts are easy exercises.  For (a) one uses the diagram
\[ \xymatrix{
\End(S) \ar[r]\ar[dr] & \End(X)\ar[d]\\
&\End(Z)
}
\]
where the vertical arrow sends a map $f$ to $qfq^{-1}$.  One checks
that the diagram
commutes using Remark~\ref{re:move-around}, and then
it follows at once that $D(qfq^{-1})=D(f)$.

For (b) one looks at the
composite
$\End(S) \ra \End(X) \ra \End(X\tens Z)$.
Both maps are isomorphisms, $D_X$ is the inverse of the first map, and
$D_{X\tens Z}$ is the inverse of the composite; it follows at once
that $D_{X\tens Z}(f\tens \id_Z)=D(f)$.  

Finally, (c) follows from (b) and the fact
that $f\tens g=(f\tens \id_Z)\circ (\id_X\tens g)$.  
\end{proof}

\begin{remark}
\label{re:move-around2}
Let $f\colon A\ra B$ and $g\colon B\ra B$, where $B$ is invertible.
The $D$-invariant of $g$ is the unique map $S\ra S$ satisfying
$g=D(g)\ctens \id_B$.  We can then write
\[ gf=(D(g)\ctens \id_B)\circ f=f\circ (D(g)\ctens \id_A),
\]
using Remark~\ref{re:move-around} for the second equality.  So
automorphisms of invertible objects can effectively be moved around
inside a composition, by replacing them with their $D$-invariant.   
\end{remark}

\subsection{The adjoint to $\alpha$ and the trace of a map}

The following result shows that an invertible object is left
dualizable.  

\begin{prop}
\label{pr:alpha-hat}
Let $X$ be an invertible object in $\cC$, with inverse $(Y,\alpha)$.
Then there is a unique map $\hat{\alpha}\colon X\tens Y \ra S$ with the
property that the composite
\[ X\iso X\tens S \llra{\id\tens \alpha} X\tens (Y\tens X) \iso (X\tens
Y)\tens X \llra{\hat{\alpha}\tens \id} S\tens X \iso X\]
equals the identity.
Moreover, $\hat{\alpha}$ is an isomorphism and the composite
\[
Y\iso S\tens Y \llra{\alpha\tens \id} (Y\tens X)\tens Y \iso Y\tens
(X\tens Y) \llra{\id\tens \hat{\alpha}} Y\tens S\iso Y
\]
also equals the identity.
\end{prop}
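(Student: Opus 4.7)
The key tool is Proposition~\ref{pr:inv-end}: tensoring with $X$ on the right gives an equivalence $(\blank)\tens X\colon\cC\ra\cC$, hence a bijection $g\mapsto g\tens\id_X$ from $\cC(X\tens Y,S)$ to $\cC((X\tens Y)\tens X,\,S\tens X)$. My plan is to define $\hat\alpha$ by specifying $\hat\alpha\tens\id_X$ directly, so that uniqueness and the isomorphism property fall out for free, and then to verify the second triangle identity by tensoring once more with $X$.

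For existence, I will let $\sigma\colon(X\tens Y)\tens X\ra S\tens X$ be the composite
\[
(X\tens Y)\tens X \iso X\tens(Y\tens X)\xrightarrow{\id_X\tens\alpha^{-1}}X\tens S\iso X\iso S\tens X,
\]
and define $\hat\alpha\colon X\tens Y\ra S$ to be the unique map satisfying $\hat\alpha\tens\id_X=\sigma$. Unwinding, the first triangle identity asserts exactly that $\sigma\circ(\id_X\tens\alpha)$, suitably pre/postcomposed with unitors, equals $\id_X$; this will hold by construction, since the $\alpha^{-1}$ in $\sigma$ cancels the $\alpha$ being fed in. Any $\hat\alpha'$ satisfying the first triangle identity must likewise have $\hat\alpha'\tens\id_X=\sigma$, forcing $\hat\alpha'=\hat\alpha$ by faithfulness of $(\blank)\tens X$. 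Since $\sigma$ is a composite of isomorphisms, so is $\hat\alpha\tens\id_X$, and hence so is $\hat\alpha$ because an equivalence reflects isomorphisms.

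The only nontrivial step is the second triangle identity. Writing $\gamma\colon Y\ra Y$ for the composite in the second display, it will suffice by faithfulness of $(\blank)\tens X$ to show $\gamma\tens\id_X=\id_{Y\tens X}$. Expanding $\gamma\tens\id_X=(\id_Y\tens\hat\alpha\tens\id_X)\circ(\alpha\tens\id_Y\tens\id_X)$ and substituting $\hat\alpha\tens\id_X=\sigma$, i.e.\ essentially $\id_X\tens\alpha^{-1}$ modulo associator and unitors, the composite reduces to $(\id_{Y\tens X}\tens\alpha^{-1})\circ(\alpha\tens\id_{Y\tens X})$. By the interchange law $f\tens g=(f\tens\id)(\id\tens g)$ this equals $\alpha\tens\alpha^{-1}\colon S\tens(Y\tens X)\ra(Y\tens X)\tens S$, which via the unitors becomes $\alpha\circ\alpha^{-1}=\id_{Y\tens X}$. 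The anticipated obstacle is purely the bookkeeping of associator and unitor diagrams, which is exactly what MacLane's coherence theorem (Section~\ref{se:MacLane}) lets me suppress; the genuine algebraic content is the cancellation $\alpha\alpha^{-1}=\id$.
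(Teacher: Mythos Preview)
Your argument is correct and close in spirit to the paper's, but the handling of the second triangle identity is genuinely different in a way worth noting.

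For existence, uniqueness, and the isomorphism property of $\hat\alpha$, you and the paper do the same thing: use that $(\blank)\tens X$ is an equivalence to set up a bijection $\cC(X\tens Y,S)\to\cC(X,X)$ and define $\hat\alpha$ as the preimage of $\id_X$. Your formulation via the explicit map $\sigma$ is just a repackaging of this.

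For the second triangle identity, the paper tensors the composite $f\colon Y\ra Y$ on the \emph{left} with $X$, obtaining $\id_X\tens f\colon X\tens Y\ra X\tens Y$, and then shows via a small commutative square (using the first triangle identity on one side and bifunctoriality on the other) that $\hat\alpha\circ(\id_X\tens f)=\hat\alpha$; since $\hat\alpha$ is already known to be an isomorphism, this forces $\id_X\tens f=\id$, hence $f=\id$ by Proposition~\ref{pr:inv-end}. You instead tensor on the \emph{right} with $X$, which lets you substitute the explicit formula $\hat\alpha\tens\id_X=\sigma$ directly; the resulting expression collapses via interchange to $\alpha\tens\alpha^{-1}$, which the unitor naturality turns into $\alpha\circ\alpha^{-1}=\id$. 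Your route avoids appealing to the isomorphism property of $\hat\alpha$ and is more of a direct computation; the paper's route is a cleaner diagram chase but uses one more ingredient. Both are short and both lean on MacLane coherence to suppress associator bookkeeping, so neither has a real advantage.
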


\begin{remark}
\label{re:GrVect}
Note that one is tempted to assume that $\hat{\alpha}$ equals the composite
\begin{myequation}
\label{eq:comp}  X\tens Y \llra{t} Y\tens X \llra{\alpha^{-1}} S.
\end{myequation}
This need not be the case.  Let $k$ be a field and let
$\cC=\GrVect_k^{\pm}$ be the category of $\Z$-graded vector spaces
with the usual tensor product, and with the twist map that involves
signs. Let $X=k[1]$, and
$Y=k[-1]$.  Let $\alpha\colon k \ra Y\tens X$ send $1$ to $1\tens 1$.
Then $\hat{\alpha}\colon X\tens Y \ra k$ must be the multiplication
map, whereas the composite (\ref{eq:comp}) sends $a\tens b$ to $-ab$.
\end{remark}

\begin{proof}[Proof of Proposition~\ref{pr:alpha-hat}]
Let $F_X\colon \cC \ra \cC$ be given by $F_X(A)=A\tens X$, and let
$F_Y\colon \cC \ra \cC$ be given by $F_Y(A)=A\tens Y$.  These are an
equivalence of categories.  Consider the chain of functions
\[ \cC(X\tens Y,S) \llra{F_X} \cC((X\tens Y)\tens X,S\tens X)
\llra{\gamma_*} \cC(X,S\tens X) \llra{\iso} \cC(X,X)\]
where $\gamma\colon X\ra (X\tens Y)\tens X$ is the composite
\[
X \llra{\iso} X\tens S \llra{\id\tens \alpha} X\tens (Y\tens X)
\llra{\iso} (X\tens Y)\tens X.
\]
Since $\gamma$ is an isomorphism, $\gamma_*$ is a bijection.
Likewise, the map labelled $F_X$ is a bijection because $F_X$ is
part of an equivalence of categories.  So the whole composite is a
bijection, and we let $\hat{\alpha}$ be the unique map whose image is the
identity $\id_X$.  This justifies the first claim in the statement of
the proposition.

To see that $\hat{\alpha}$ is an isomorphism, consider again the first
composite in
the statement.  All the maps other than $\hat{\alpha}\tens
\id$ are known to be isomorphisms, so we can conclude the same for
$\hat{\alpha}\tens \id=F_X(\hat{\alpha})$.  As $F_X$ is part of an equivalence of
categories, it must be that $\hat{\alpha}$ was an isomorphism itself.

Finally, let $f\colon Y\ra Y$ be the second composite in the
statement.  We have a commutative diagram
\[ \xymatrixcolsep{4pc}\xymatrix{
X\tens Y \ar[dr]_{\id}\ar[r]^-{\id\tens \alpha\tens \id} & X\tens Y\tens X\tens Y
\ar[d]^{\hat{\alpha}\tens \id\tens \id}\ar[r]^-{\id\tens \id\tens \hat{\alpha}} & X\tens
Y\tens S \ar[r]^\iso & X\tens Y \ar[d]^{\hat{\alpha}} \\
&X\tens Y \ar[rr]^{\hat{\alpha}} && S,
}
\]
where we have left off several associativity isomorphisms.  The
triangle commutes by the defining property of $\hat{\alpha}$, and 
the composite across the top row is $\id_X\tens f$.  
So the diagram shows that $\hat{\alpha}\circ(\id_X\tens f)=\hat{\alpha}$.
As $\hat{\alpha}$ is an isomorphism we conclude that $\id_X\tens
f=\id_{X\tens Y}$.  From this it follows that $f=\id_Y$, by
Proposition~\ref{pr:inv-end}.  
\end{proof}

Suppose that $X$ is invertible and $f\colon X\ra X$.  Since $X$ is
left dualizable we may take the trace, obtaining $\tr(f)\colon S\ra
S$.  Recall that we have another way to obtain a self-map of $S$,
namely the $D$-invariant $D(f)$.  
These are connected by the
following formula:

\begin{prop}
\label{pr:D-tr}
Let $X$ be an invertible object and let $f\colon X\ra X$.  Then
one has $\tr(\id_X)\cdot D(f)=\tr(f)$.  
\end{prop}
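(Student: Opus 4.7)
The plan is to run the calculation directly, using the fact that $D(f)$ is \emph{defined} so as to make a certain triangle commute, and then recognizing the leftover composite as $\tau_X$. The key observation is that, by the very definition of the $D$-invariant,
\[ D(f) = \alpha^{-1}\circ(\id_Y\tens f)\circ \alpha, \]
which rearranges to the identity
\[ (\id_Y\tens f)\circ \alpha \;=\; \alpha\circ D(f) \]
of morphisms $S\ra Y\tens X$.

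Substituting this into the formula
\[ \tr(f)=\hat{\alpha}\circ t\circ(\id_Y\tens f)\circ\alpha \]
from Section~\ref{se:KL-pre} (with $t\colon Y\tens X\ra X\tens Y$ the twist), I obtain
\[ \tr(f) \;=\; \hat{\alpha}\circ t\circ\alpha\circ D(f). \]
The composite $\hat{\alpha}\circ t\circ \alpha\colon S\ra S$ is by definition $\tr(\id_X)=\tau_X$, so $\tr(f)=\tau_X\circ D(f)$ as maps $S\ra S$. Since both factors lie in the abelian monoid $\End(S)$ (Lemma~\ref{le:aut-ab}), composition agrees with the monoid multiplication, giving $\tr(f)=\tau_X\cdot D(f)=\tr(\id_X)\cdot D(f)$.

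There is essentially no obstacle: the proposition falls out of the definition of $D$ once one notices that the ``middle'' of the trace formula is precisely the ``middle'' of the definition of $D$. The only point requiring minor care is that $D(f)\colon S\to S$ is being composed on the source side of $\alpha\colon S\to Y\tens X$, but this is unambiguous as ordinary composition of morphisms in $\cC$ and needs no appeal to Lemma~\ref{le:ctens} or Remark~\ref{re:move-around} (though one could alternatively write $f=D(f)\ctens \id_X$, apply $\id_Y\tens (\blank)$, and pull $D(f)\ctens(\blank)$ outside the trace composite via Remark~\ref{re:move-around} to arrive at the same conclusion).
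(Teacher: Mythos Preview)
Your proof is correct and follows essentially the same approach as the paper: both arguments amount to observing that in the composite defining $\tr(\id_X)\cdot D(f)$ the maps $\alpha^{-1}$ and $\alpha$ cancel, leaving exactly the defining composite for $\tr(f)$. You phrase this as a substitution using $(\id_Y\tens f)\circ\alpha=\alpha\circ D(f)$, while the paper writes out the long composite and cancels the middle terms directly, but the content is identical.
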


\begin{proof}
The composite $\tr(\id_X)\cdot D(f)$ is
\[ S\llra{\alpha} Y\tens X \llra{\id\tens f}Y\tens X \llra{\alpha^{-1}} S
\llra{\alpha}Y\tens X
\llra{\id} Y\tens X \llra{t} X\tens Y \llra{\hat{\alpha}} S.
\]
The terms in the middle cancel and we obtain the definition of
$\tr(f)$.  
\end{proof}

\begin{remark}
\label{re:tr-D}
Consider again the example $\GrVect^{\pm}_k$ from
Remark~\ref{re:GrVect}, with $X$, $Y$, and $\alpha$ as described there.
Then $\tr(\id_X)=-1$, but of course $D(\id_X)=1$.  So this gives an
example where the trace and $D$-invariant are distinct.
\end{remark}

Our next major goal will be to prove that when $X$ is invertible one
has $\tr(\id_X)^2=\id_S$.  This is an important property of invertible
objects, but unfortunately we have not been able to find a direct, simple-minded
proof.  We will instead deduce the result from a cyclic
permutation property, which {\it a priori\/} feels somewhat deeper.

\subsection{Automorphisms of invertible objects induced from permutations}

Let $x_1,\ldots,x_n$ be formal variables and let $w$ be any tensor
word in the $x_i$'s with the property that each $x_i$ appears exactly
once.  For instance, if $n=3$ we might have $w=(x_1\tens x_3) \tens
x_2$.  We can associate to $w$ a functor $F_w\colon \cC^n \ra \cC$
which plugs in objects for the variables $x_i$. 
For objects $X_1,\ldots,X_n$ in $\cC$ write
$w(X_1,\ldots,X_n)$ as shorthand for $F_w(X_1,\ldots,X_n)$, and write
$w(X)$ as shorthand for $F_w(X,X,\ldots,X)$.

If $\sigma$ is a permutation of $\{1,\ldots,n\}$, we let $w\sigma$
denote the word in which $x_i$ has been replaced by $x_{\sigma(i)}$.  So we
can write 
\[ F_{w\sigma}(X_1,\ldots,X_n)=F_w(\sigma\cdot
(X_1,\ldots,X_n))=F_w(X_{\sigma(1)},X_{\sigma(2)},\ldots,X_{\sigma(n)}).
\]  

By MacLane's coherence theorem
\cite[Theorem XI.1.1]{M} there is a unique natural
transformation $F_w \ra F_{w\sigma}$ obtained by composing
associativity and commutativity isomorphisms.  If $X$ is an object in
$\cC$, we can evaluate this
natural transformation at the  tuple $(X,X,\ldots,X)$ and thereby obtain an
automorphism $\phi_{w,\sigma}\colon w(X)\ra w(X)$.  In this way we
obtain a function $\phi_w\colon \Sigma_n \ra \Aut(w(X))$, which is readily checked
to be a homomorphism.

The following result is from \cite[Discussion preceding Theorem 4.3]{V}:

\begin{lemma}
\label{le:cyclic}
Let $X$ be an invertible object in $\cC$.  Then for any tensor word
$w$ in $n$ variables, and any even permutation $\sigma$ in $\Sigma_n$,
the map $\phi_{w,\sigma}\colon w(X) \ra w(X)$ is equal to the
identity.
In particular, the composite map
\[ (X\tens X) \tens X \llra{t_{X\tens X,X}} X\tens (X\tens X)
\llra{a} (X\tens X)\tens X \]
is equal to the identity.  (Note that this composite map is an
instance of the canonical map $(A\tens B)\tens C \ra C\tens (A\tens
B)$, i.e. the cyclic permutation map).
\end{lemma}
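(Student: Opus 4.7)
The plan is to reduce the claim to abstract group theory by exploiting the fact that $\Aut(w(X))$ is abelian whenever $X$ is invertible. Since a tensor product of invertible objects is invertible (noted explicitly just after the definition of ``invertible'' in Section~\ref{se:inv}), an easy induction on the structure of the word shows that $w(X)$ is invertible for every tensor word $w$ in which each variable appears once and is evaluated at $X$. By Proposition~\ref{pr:inv-end} the canonical monoid map $\End(S)\to \End(w(X))$ is then an isomorphism, and by Lemma~\ref{le:aut-ab} the monoid $\End(S)$ is commutative. Consequently $\Aut(w(X))$ is an abelian group.

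Next, $\phi_w\colon \Sigma_n \to \Aut(w(X))$ is a group homomorphism (as observed in the paragraph preceding the lemma), so it factors through the abelianization of $\Sigma_n$. For $n\geq 2$ this abelianization is $\Z/2$ via the sign homomorphism, whose kernel is exactly the alternating group $A_n$ of even permutations; for $n\leq 1$ the group $\Sigma_n$ is itself trivial and the statement is vacuous. Either way, $\phi_{w,\sigma}=\id$ for every even $\sigma$.

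For the ``in particular'' assertion, one takes $w=(x_1\tens x_2)\tens x_3$ and lets $\sigma$ be the $3$-cycle $(1\ 2\ 3)=(1\ 2)(1\ 3)$, which is even. The displayed composite $a\circ t_{X\tens X,X}\colon (X\tens X)\tens X\to (X\tens X)\tens X$ is manifestly built from associativity and commutativity isomorphisms and effects the cyclic permutation $\sigma$ on the three factors of $X$; by MacLane's coherence theorem it must therefore coincide with $\phi_{w,\sigma}$, which by the first part is the identity.

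I do not anticipate a serious obstacle here: the entire strategy is the two-line observation ``abelianize, then invoke sign''. What makes the result feel nontrivial is that without the invertibility of $X$ there is no reason for $\Aut(w(X))$ to be abelian, and so the sign argument would fail—indeed, in the absence of invertibility one would need to track an explicit cancellation of associators and twists. The real content is therefore bundled into Proposition~\ref{pr:inv-end} and Lemma~\ref{le:aut-ab}, both of which are already in hand.
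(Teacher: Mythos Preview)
Your proof is correct and follows essentially the same approach as the paper: observe that $w(X)$ is invertible, use Proposition~\ref{pr:inv-end} and Lemma~\ref{le:aut-ab} to conclude $\Aut(w(X))$ is abelian, and then note that the homomorphism $\phi_w\colon \Sigma_n\to\Aut(w(X))$ must kill the commutator subgroup $A_n$. You supply slightly more detail (the induction for invertibility of $w(X)$, the trivial $n\leq 1$ case, and the identification of the displayed composite with $\phi_{w,(1\,2\,3)}$ via MacLane coherence), but the argument is the same.
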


\begin{proof}
Since $X$ is invertible, so is $w(X)$.  Hence
$\Aut(w(X))$ is abelian by Proposition~\ref{pr:inv-end} and
Lemma~\ref{le:aut-ab}.  
This means the homomorphism $\phi_w\colon
\Sigma_n \ra \Aut(w(X))$ kills the commutator subgroup of $\Sigma_n$,
which is the alternating group $A_n$.
\end{proof}

We can now obtain our goal:

\begin{prop}
\label{pr:tr-inv}
If $X$ is an invertible object then $\tr(\id_X)^2=\id_S$.  
\end{prop}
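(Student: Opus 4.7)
The plan is to combine Lemma~\ref{le:cyclic} with the Kelly--Laplaza graphical calculus of Section~\ref{se:KL} in order to evaluate a single trace in $\End(S)$ in two different ways.

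By Lemma~\ref{le:cyclic} the cyclic $3$-shift $c = a \circ t_{X \tens X, X}\colon X^{\tens 3} \to X^{\tens 3}$ equals $\id_{X^{\tens 3}}$, and consequently $\tr(c) = \tr(\id_{X^{\tens 3}})$. I will compute each side independently using KL coherence. Working in $\KL(\{X\})$, the morphism $c$ is represented by a $3$-cycle picture on three strands of $X$; closing it up threads all three strands into a single closed loop, contributing $[X]$ in the formal trace monoid. By contrast, closing up the identity picture on three strands yields three disjoint loops, contributing $[X]^3$. Under the symmetric-monoidal-with-duals functor $F\colon \KL(\{X\}) \to \cC$ that sends the formal generator to the chosen invertible $X$---and hence sends $[X] = \tr(\id_X)$ to $\tau_X$---these formal identities transfer to
\[ \tr(c) = \tau_X, \qquad \tr(\id_{X^{\tens 3}}) = \tau_X^3, \]
inside $\End(S)$.

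Equating the two expressions for $\tr(c)$ gives $\tau_X = \tau_X^3$. Because $\tau_X = \hat{\alpha} \circ t_{Y,X} \circ \alpha$ is a composite of three isomorphisms (invoking Proposition~\ref{pr:alpha-hat} for the fact that $\hat{\alpha}$ is an isomorphism), $\tau_X \in \Aut(S)$. Cancelling a factor of $\tau_X$ from both sides then yields $\tau_X^2 = \id_S$.

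The main obstacle is the second step: the KL graphical evaluation of the two traces and their transfer back to $\cC$. Although it is visually immediate that a $3$-cycle on three strands closes up to one loop while the identity closes up to three disjoint loops, making this rigorous requires the full strength of the Kelly--Laplaza Coherence Theorem together with the fact that symmetric monoidal functors preserving left duals preserve trace. A more hands-on alternative would be to establish the two identities $\tr(\id_{X^{\tens 3}}) = \tau_X^3$ (iterated multiplicativity of trace) and $\tr(c) = \tau_X$ (cyclic invariance of trace applied to the twist-and-associativity composite) by direct diagram chase using $\alpha$, $\hat{\alpha}$, and the associativity/twist structure, though this is considerably more tedious.
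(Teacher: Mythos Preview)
Your proof is correct and follows essentially the same route as the paper's: both use Lemma~\ref{le:cyclic} to identify the cyclic permutation $c$ with the identity on $X^{\tens 3}$, then evaluate $\tr(c)$ and $\tr(\id_{X^{\tens 3}})$ via the Kelly--Laplaza calculus (one closed loop versus three) to obtain $\tr(\id_X)=\tr(\id_X)^3$, and finally cancel using invertibility. The only cosmetic difference is that you write $\tau_X$ for $\tr(\id_X)$, a notation the paper introduces just after this proposition.
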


\begin{proof}
Let $f=\tr(\id_{(X\tens X)\tens X})$.  In the Kelly-Laplaza category
$\KL(\{X\})$ this is represented by the picture

\begin{picture}(300,60)
\put(150,0){\includegraphics[scale=0.25]{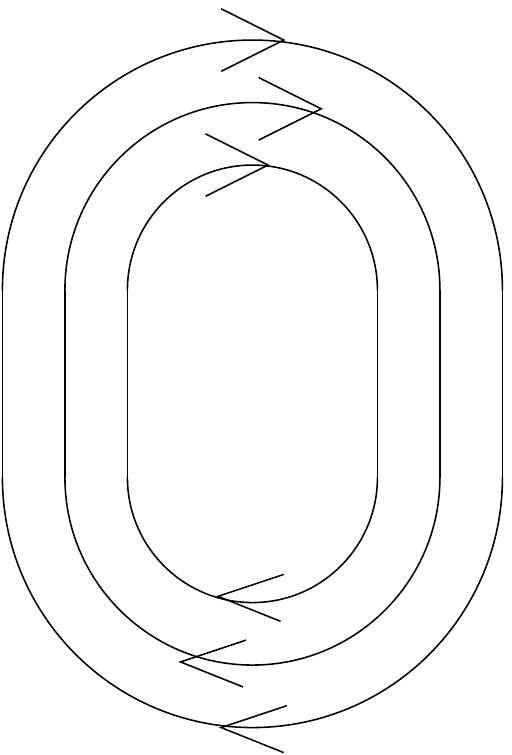}}
\end{picture}

\noindent
from which one obtains that $f=\tr(\id_X)^3$ (due to the three components).  

On the other hand, by Lemma~\ref{le:cyclic} $f$ is also equal to
$\tr(c)$ where $c=a\circ t_{X\tens X,X}$ is the cyclic permutation
map.  In the Kelly-Laplaza category this is represented by the picture

\begin{picture}(300,60)
\put(150,0){\includegraphics[scale=0.25]{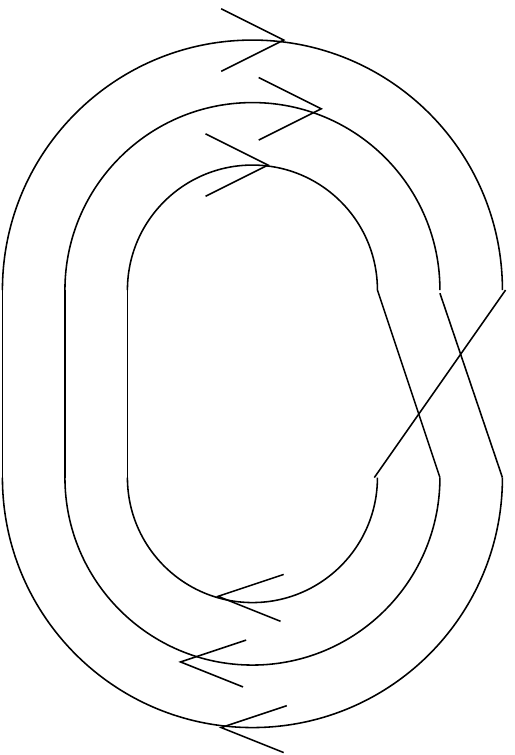}}
\end{picture}

\noindent
and from this one obtains $f=\tr(\id_X)$ (due to the one connected
component).  So we have proven that $\tr(\id_X)^3=\tr(\id_X)$.  

However, when $X$ is invertible all the maps in the composite defining
$\tr(\id_X)$ are isomorphisms---so
$\tr(\id_X)$ is an isomorphism.  We can therefore cancel one $\tr(\id_X)$
from each side of the previous equation to obtain
$\tr(\id_X)^2=\id_S$.  
\end{proof}

\subsection{Basic commuters for invertible objects}
If $X$ is an invertible object define $\tau_X=\tr(\id_X)\in \Aut(S)$
and call this the \dfn{basic commuter} associated to $X$.  These
elements will be important in our treatment of skew-commutativity in
Section~\ref{se:app}.
Note that
if $X\iso X'$ then $\tau_X=\tau_{X'}$, by
Proposition~\ref{pr:trace}(a).
Let $\Pic(\cC)$ denote the set of isomorphism classes of invertible
objects in $\cC$; the tensor product makes this set into a group.  We
have produced a set map $\tau\colon \Pic(\cC)\ra \Aut(S)$, whose image
lands in the $2$-torsion subgroup $\psub{2}{\Aut(S)}$ by
Proposition~\ref{pr:tr-inv}.

\begin{prop}
\label{pr:tau}  The map $\tau\colon \Pic(\cC)\ra \psub{2}{\Aut(S)}$ is
a group homomorphism.  
\end{prop}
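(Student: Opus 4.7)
The plan is to show that for invertible objects $X$ and $Z$ one has $\tau_{X\tens Z}=\tau_X\cdot\tau_Z$ in $\Aut(S)$, together with the easy remark that $\tau_S=\id_S$ (which follows immediately from the definition of trace, since the composite defining $\tr(\id_S)$ is the identity). Unwinding the definition $\tau_X=\tr(\id_X)$, this reduces to proving the multiplicativity formula
\[ \tr(\id_{X\tens Z})=\tr(\id_X)\cdot\tr(\id_Z) \]
in $\End(S)$. Once this is established, the multiplicativity descends to isomorphism classes (since $\tau_X$ is already known to depend only on the isomorphism class of $X$), and so $\tau$ is a group homomorphism.

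To prove the trace identity I would apply the Kelly--Laplaza coherence theorem. Consider the free symmetric monoidal category with left duals $\KL(\{X,Z\})$ together with the canonical symmetric monoidal functor $F\colon\KL(\{X,Z\})\to\cC$ which sends the generators to $X$ and $Z$. Using the remark at the end of Section~\ref{se:KL} describing the picture of a trace in a Kelly--Laplaza category, the formal trace of $\id_{X\tens Z}$ is obtained by taking the ``box'' picture of $\id_{X\tens Z}$ (two disjoint vertical edges, one with both endpoints labelled $X$ and one with both endpoints labelled $Z$) and closing it up by the standard cup on top and cap on bottom. Because a Kelly--Laplaza picture only records which vertex is connected to which, any crossings introduced by the twist map in the definition of the trace are irrelevant: the resulting picture decomposes into two disjoint closed loops, one in which every vertex is labelled $X$ and one in which every vertex is labelled $Z$.

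By the composition rule in $\KL(\{X,Z\})$, each closed loop labelled by an object $W\in\{X,Z\}$ contributes a factor of $[W]$ to the corresponding element of $\N\langle\{X,Z\}\rangle=\End_{\KL(\{X,Z\})}(S)$. Thus the formal trace of $\id_{X\tens Z}$ in $\KL(\{X,Z\})$ is the product $[X]\cdot[Z]$. Applying $F$ and using that $F([W])=\tr(\id_W)=\tau_W$ yields $\tr(\id_{X\tens Z})=\tau_X\cdot\tau_Z$ in $\cC$, completing the proof.

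The only real obstacle is being careful that the two loops actually split apart cleanly in the trace picture; this is exactly the observation exploited in the second worked example of Section~\ref{se:KL}, where a more elaborate composite was shown to equal $\tr(\id_X)\circ\tr(\id_Y)$ by the same disjoint-loop analysis. Once one is comfortable with the Kelly--Laplaza pictorial calculus, this step is essentially automatic, and the rest of the argument is formal.
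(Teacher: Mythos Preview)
Your proof is correct and follows essentially the same approach as the paper: both arguments reduce to showing $\tr(\id_{X\tens Z})=\tr(\id_X)\cdot\tr(\id_Z)$ via the Kelly--Laplaza pictorial calculus, observing that the trace picture for $\id_{X\tens Z}$ decomposes into two disjoint closed loops labelled $X$ and $Z$. Your additional remark that $\tau_S=\id_S$ is harmless but unnecessary, since multiplicativity of a map between groups already forces the identity to go to the identity.
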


\begin{proof}
We need only show that if $X$ and $Z$ are invertible then
$\tau_{X\tens Z}=\tau_X\cdot \tau_Z$.  For this
we work in 
the Kelly-Laplaza category
$\KL(\{X,Z\})$ and observe that the map $\tr(\id_{X\tens Z})$ is represented by the
picture

\begin{picture}(300,50)
\put(150,0){\includegraphics[scale=0.35]{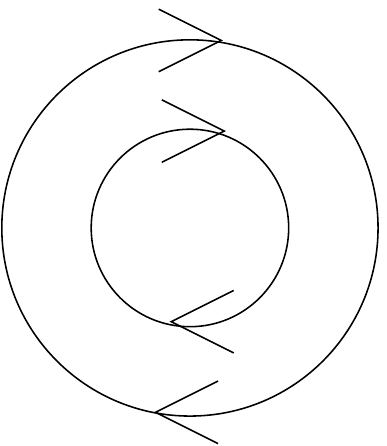}}
\put(172,19){$\scriptstyle{X}$}
\put(190,19){$\scriptstyle{Z}$}
\end{picture}

\noindent
which also represents $\tr(\id_X)\cdot \tr(\id_Z)$. 
\end{proof}

We may also describe $\tau_X$ in terms of the twist map 
$t_{X,X}\colon X\tens X\ra X\tens X$:

\begin{prop}
\label{pr:tr=D}
$\tau_X=\tr(\id_X)=D(t_{X,X})=\tr(t_{X,X})$.
\end{prop}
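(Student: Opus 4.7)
The first equality $\tau_X=\tr(\id_X)$ is just the definition of the basic commuter, so the real content is proving $\tr(\id_X) = D(t_{X,X}) = \tr(t_{X,X})$. My plan is to first establish $D(t_{X,X}) = \tr(t_{X,X})$ via Proposition~\ref{pr:D-tr}, and then identify $\tr(t_{X,X})$ with $\tr(\id_X)$ using a Kelly--Laplaza picture computation.

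First, I would apply Proposition~\ref{pr:D-tr} to the invertible object $X\tens X$ and the self-map $t_{X,X}$. This gives the identity
\[ \tr(\id_{X\tens X})\cdot D(t_{X,X}) = \tr(t_{X,X}) \]
in $\End(S)$. Now $\tr(\id_{X\tens X}) = \tau_{X\tens X}$, and by the homomorphism property in Proposition~\ref{pr:tau} together with Proposition~\ref{pr:tr-inv}, $\tau_{X\tens X} = \tau_X^2 = \id_S$. So $\tr(\id_{X\tens X}) = \id_S$ and we conclude $D(t_{X,X}) = \tr(t_{X,X})$.

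For the second equality, I would use Kelly--Laplaza coherence applied to the set $\{X\}$: choose a dual for $X\tens X$, say $Y\tens Y$ (with the induced $\alpha$ and $\hat\alpha$, which works since uniqueness of duals makes the trace independent of this choice). The trace $\tr(t_{X,X})\colon S\to S$ is represented in $\KL(\{X\})$ by the picture obtained by inserting the crossing picture for $t_{X,X}$ into the blank box of the trace picture from the final Remark of Section~\ref{se:KL}. Closing off the transposition produces a single loop whose vertices are all labelled by $X$ (this is the standard ``closure of a permutation has one loop per cycle'' observation, and the transposition has a single $2$-cycle). By the composition rule in $\KL(\{X\})$, that single $X$-loop contributes a factor of $\tr(\id_X)$, so in $\KL(\{X\})$ the equality $\tr(t_{X,X}) = \tr(\id_X)$ holds. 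Because this identity holds in the universal example, it transports to $\cC$ via the symmetric monoidal functor provided by Kelly--Laplaza coherence.

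The main obstacle is the pictorial verification that the closure of $t_{X,X}$ yields a single loop (as opposed to two disjoint loops, which would give $\tr(\id_X)^2$). This is where one must be careful about what $\tr$ means in the Kelly--Laplaza framework, and in particular that it is a ``full closure'' that identifies the outgoing strands with the incoming strands on both ends; the crossing ensures both strands merge into one component. Once this picture is drawn correctly, the rest of the argument is routine bookkeeping with the structure already developed in Sections~\ref{se:KL} and~\ref{se:inv}.
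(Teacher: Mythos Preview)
Your proof is correct and follows essentially the same approach as the paper: the first equality is the definition, the third is obtained by applying Proposition~\ref{pr:D-tr} to $X\tens X$ and $t_{X,X}$ together with $\tau_{X\tens X}=\tau_X^2=\id_S$ from Propositions~\ref{pr:tau} and~\ref{pr:tr-inv}, and the remaining equality $\tr(t_{X,X})=\tr(\id_X)$ is read off from the Kelly--Laplaza picture, which has a single connected component.
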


\begin{proof}
The first equality is the definition.  By
Proposition~\ref{pr:D-tr} we have that
$\tr(\id_{X\tens X})\cdot D(t_{X,X})=\tr(t_{X,X})$,  but
$\tr(\id_{X\tens X})=\tau_{X\tens X}=\tau_X^2=1$ using
Proposition~\ref{pr:tau}.  This proves the third equality.  We
can complete the proof by showing that $\tr(\id_X)=\tr(t_{X,X})$.
This actually follows  by the Kelly-Laplaza theorem, for
$\tr(t_{X,X})$ is represented by the picture below:

\begin{picture}(300,50)
\put(150,0){\includegraphics[scale=0.25]{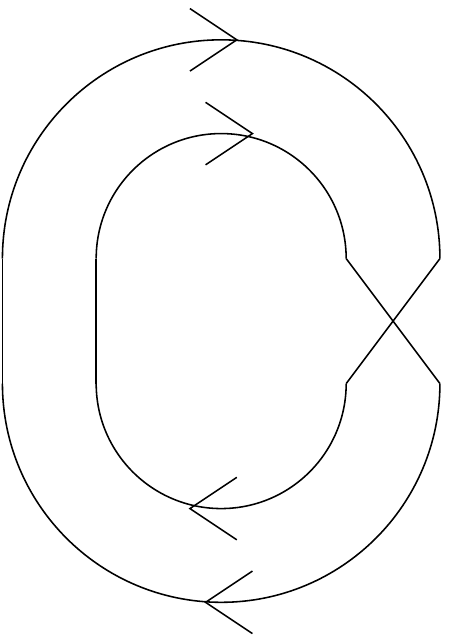}}
\end{picture}

\noindent
Because there is only one connected component, this also represents
$\tr(\id_X)$.  
\end{proof}


\section{Coherence for invertible objects}
\label{se:coherence}
In this section we prove our main coherence theorems for invertible
objects in a symmetric monoidal category.  We deduce these as
consequences of the Kelly-Laplaza theorem.  

\medskip

Let $(\cC,\tens,S)$ be a symmetric monoidal category and let $X$ be an
invertible object with inverse $(X^*,\alpha)$.  Recall
that $\tr(\id_X)$ is defined to be the composite
\[ S \llra{\alpha} X^*\tens X \llra{t} X\tens X^* \llra{\hat{\alpha}}
S.
\]
Every map in this composite is an isomorphism, so we can write
\[ \alpha^{-1} = \tr(\id_X)^{-1}\circ (\hat{\alpha} t) =
\tr(\id_X)\circ (\hat{\alpha} t) = \tr(\id_X)\ctens (\hat{\alpha}t)\]
where in the second equality we have used that $\tr(\id_X)^2=\id_S$
(see Proposition~\ref{pr:tr-inv}).  
Similarly, we have
\[ \hat{\alpha}^{-1}=\tr(\id_X)\ctens (t\alpha).\]

Let $\cC^{inv}$ be the full subcategory of invertible objects.  Then
$\cC^{inv}$ is a symmetric monoidal category with left duals.  We will
deduce our desired coherence theorems for invertible objects from 
Kelly-Laplaza coherence applied to $\cC^{inv}$.  

We saw in Section~\ref{se:KL} that maps in a Kelly-Laplaza category
can be represented by pictures consisting of certain kinds of directed curves in the
plane.  These curves are very simple: every crossing is a standard
``X''-crossing, and every place where there is a horizontal tangent
line is either a local minimum or local maximum with respect to the
$y$-coordinate (i.e., a cup or cap)---we will call these cups and caps
the \dfn{critical
points} of the curve.

\subsection{Coherence without self-twists}

\begin{proof}[Proof of Theorem~\ref{th:coherence}]
Let us use the term ``acceptable'' for 
 formal composites of the type considered in the statement
of the proposition.
Suppose there were two acceptable formal composites $f,g\colon w_1\ra
w_2$ that yielded
different maps in $\cC$.  Then the formal composite $g^{-1}f\colon
w_1\ra w_1$ would yield a map in $\cC$ different from the identity.
So it suffices to prove the proposition in the case $w_1=w_2$ and $g=\id$.  

Suppose $f\colon w_1\ra w_1$ is an acceptable formal composite such that
$f(\cC)\neq \id$. 
Let $w_1^*$ be the formal inverse of the word $w_1$, and choose any
acceptable formal composite $h\colon S\ra w_1^*\tens w_1$.  Consider the
formal composite
\[ S \llra{h} w_1^*\tens w_1 \llra{\id \tens f} w_1^*\tens w_1
\llra{h^{-1}} S.
\]
Since
$f(\cC)\neq \id$ Proposition~\ref{pr:inv-end} shows that 
the map $\id\tens f$ also does not give the identity in $\cC$, and
from this it follows that the above composite
does not give the identity either.
So it suffices to prove the proposition in the case $w_1=w_2=S$ and $g=\id$.  

Let $n_1$ be the number of $\alpha^{-1}$ maps that appear in $f$, and
let $n_2$ be the number of $\hat{\alpha}^{-1}$ maps that appear.  Let
$n=n_1+n_2$.  
Let $F$ be the formal composite in which every $\alpha^{-1}$ has been
replaced with $\hat{\alpha}t$ and every $\hat{\alpha}^{-1}$ has been
replaced with $t\alpha$.  
Using Remark~\ref{re:move-around}, the identities 
$\alpha^{-1}=\tr(\id_X)\ctens (\hat{\alpha}t)$ and
$\hat{\alpha}^{-1}=\tr(\id_X)\ctens (t\alpha)$ show that 
$f(\cC)=
\tr(\id_X)^n\ctens F(\cC)$.

The reason for introducing $F$ is that it only involves maps that
exist for dualizable objects, rather than invertible ones.  So we may
consider $F$ as a composite in the
category Kelly-Laplaza category $\KL(\{X\})$.   The assumption that $f$ was
acceptable implies that $F$ can be represented by the disjoint union
of simple closed curves; for example, one of the components might look
like this:

\begin{picture}(300,120)
\put(100,0){\includegraphics[scale=0.25]{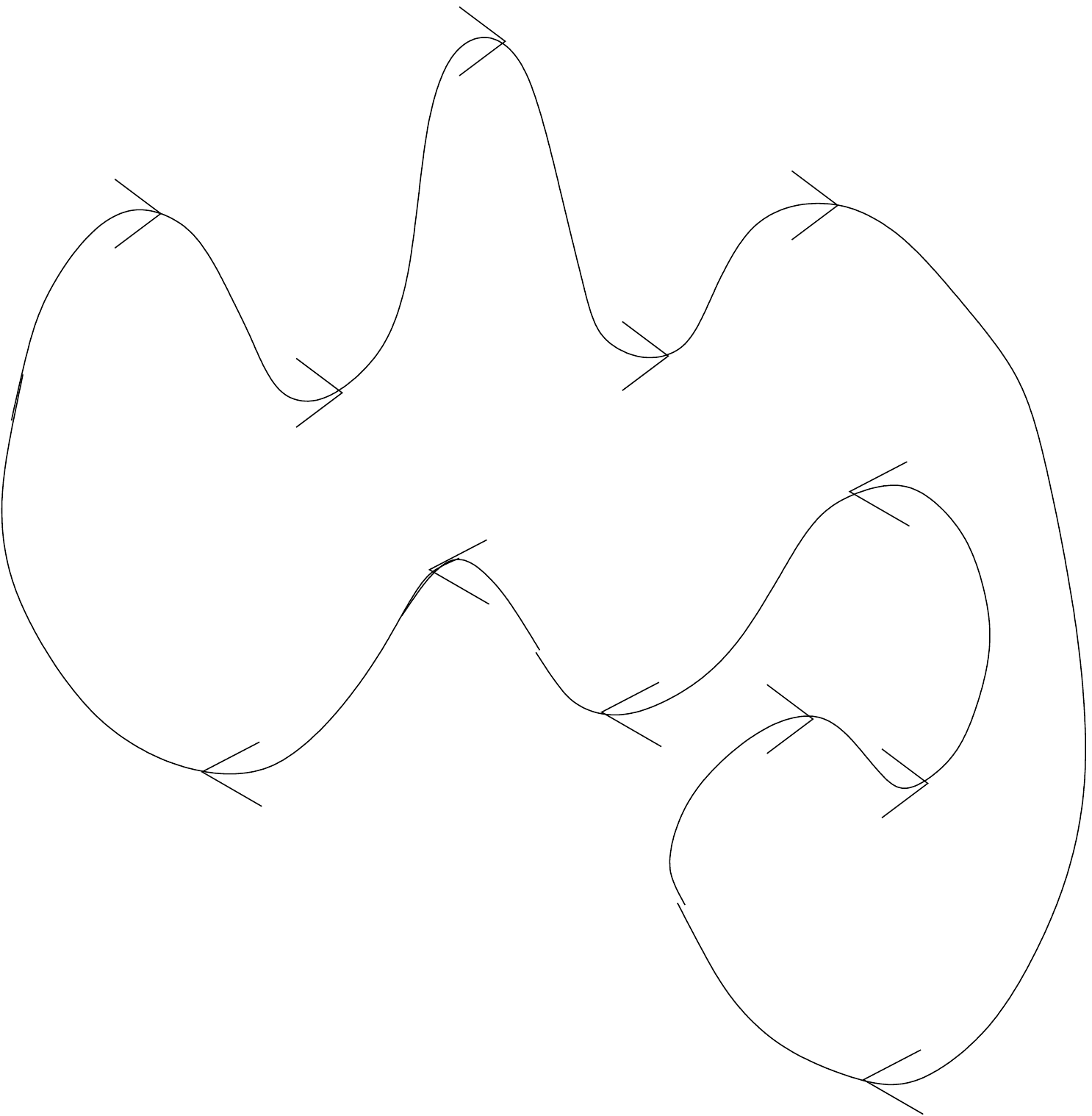}}
\end{picture}

\noindent
Let us be clear about why this works.  The assumption that the formal
composite $f$ is acceptable guarantees that the only twist maps that
appear in $F$ come together with an $\alpha$ or $\hat{\alpha}$.  In
terms of the pictures, each of these twists can be eliminated; to see
this, recall
how the pictures work:

\vspace{0.2in}
  
\[ \xymatrix{\bullet \ar@/^3ex/[r]&\bullet}=\alpha,\qquad
\xymatrix{\bullet \ar@/_3ex/[r]&\bullet}=\hat{\alpha},
\]
\vspace{0.2in}


\[
\xymatrixrowsep{0.5pc}\xymatrixcolsep{1.3pc}
\xymatrix{ &&& \bullet\ar@/^3ex/[r] & \bullet\ar[ddl]\\
&&=&&&=t\alpha,\\
\bullet &\bullet
\ar@/_3ex/[l] & 
& \bullet & \bullet\ar[uul]}
\qquad
\xymatrixrowsep{0.5pc}\xymatrixcolsep{1.3pc}
\xymatrix{ &&& \bullet&\bullet\ar[ddl] & \\ 
&&=&&&=\hat{\alpha}t.\\
\bullet &\bullet
\ar@/^3ex/[l]
&& \bullet\ar@/_3ex/[r]&\bullet\ar[uul]}
\]

\vspace{0.2in}
\noindent
The twists in $F$ only appear in conjunction with a cup or cap, and so
they can all be depicted by an untwisted cup or cap going in the
opposite direction.

Now we come to the crux of the matter.  For the moment assume that the
picture for $F$ only contains one component, for simplicity.  In this
simple closed curve, every $\alpha$ or $\hat{\alpha}$ appears as a
right-pointing arrow on a cup or cap.  Likewise, every $t\alpha$ or
$\hat{\alpha}t$ appears as a left-pointing arrow on a cup or cap.  So
the number of left-pointing arrows in our simple closed curve is
$n_1+n_2=n$.  But elementary topology shows that in a (nice enough)
directed, simple, closed curve the number of left-pointing critical
points  must always be odd (the same is true for the
number of right-pointing critical points, of course).  So $n$ is odd.

In the category $KL(\{(X\})$ we know that a simple closed curve as
above is equal to $\tr(\id_X)$.  So when $F$ is evaluated in $\cC$
it also gives this trace.  Putting everything together, we find that
$f(\cC)=\tr(\id_X)^n\circ F(\cC)=\tr(\id_X)^n\circ
\tr(\id_X)=\tr(\id_X)^{n+1}$.  
Given that $n$ is odd, this just equals the identity (using
Proposition~\ref{pr:tr-inv}).  This completes the proof for the case
that the picture for $F$ has only one component.

For the multi-component case we observe that in $\KL(\{X\})$ the
map $F$ is the composition of the maps represented by each individual
component; by what has already been argued, $F(\cC)$ is a
composition of identity maps and hence equal to the identity.
\end{proof}

\begin{proof}[Proof of Theorem~\ref{th:multi-coherence1}]
This is essentially the same as the proof of
Theorem~\ref{th:coherence}, except we use the Kelly-Laplaza
category $\KL(\{X_1,\ldots,X_n\})$.  Note that the existence part of
the theorem is obvious; the work lies in showing uniqueness of the
isomorphism.  For this one reduces, just as in
Theorem~\ref{th:coherence}, to the case of a composite $f$ that starts
and ends with $S$ and is of the type specified in the statement of the
theorem.

The composite $f$ is
then replaced by a corresponding formal composite $F$ in
$\KL(\{X_1,\ldots,X_n\})$.  The picture for $F$ is a collection of
simple, closed curves in the plane, each labelled by one of the
$X_i$'s, which are allowed to intersect each other in double points.
The Kelly-Laplaza theorem identifies $F$ with the composite of the
maps whose pictures correspond to each closed curve.  In this way one
reduces to the one-variable case handled by
Theorem~\ref{th:coherence}, to conclude that $f$ must be the identity.   
\end{proof}

\subsection{Coherence with self-twists}

\begin{proof}[Proof of Theorem~\ref{th:coherence2}]
The proof proceeds along the same lines as that of
Theorem~\ref{th:coherence}.  One immediately reduces to the case
$w_1=w_2=S$, $g=\id$, and where the parity of $f$ is even.
Just as before, we replace $f$ by a corresponding formal
composite $F$ in the Kelly-Laplaza category $\KL(\{X\})$.  We have
\[ f(\cC)=\tr(\id_X)^{n}\circ F(\cC),
\]
where the integer $n$ is the same as in the proof of
Theorem~\ref{th:coherence}.  

The picture corresponding to $F$ is no longer a union of simple curves
as it was in the proof of Theorem~\ref{th:coherence}.  Rather, it
is a union of oriented, closed curves that may contain double points of
self-intersection.  For pedagogical purposes let us first deal with
the case where the picture contains a single closed curve, for example
as follows:

\begin{picture}(300,120)
\put(100,10){\includegraphics[scale=0.2]{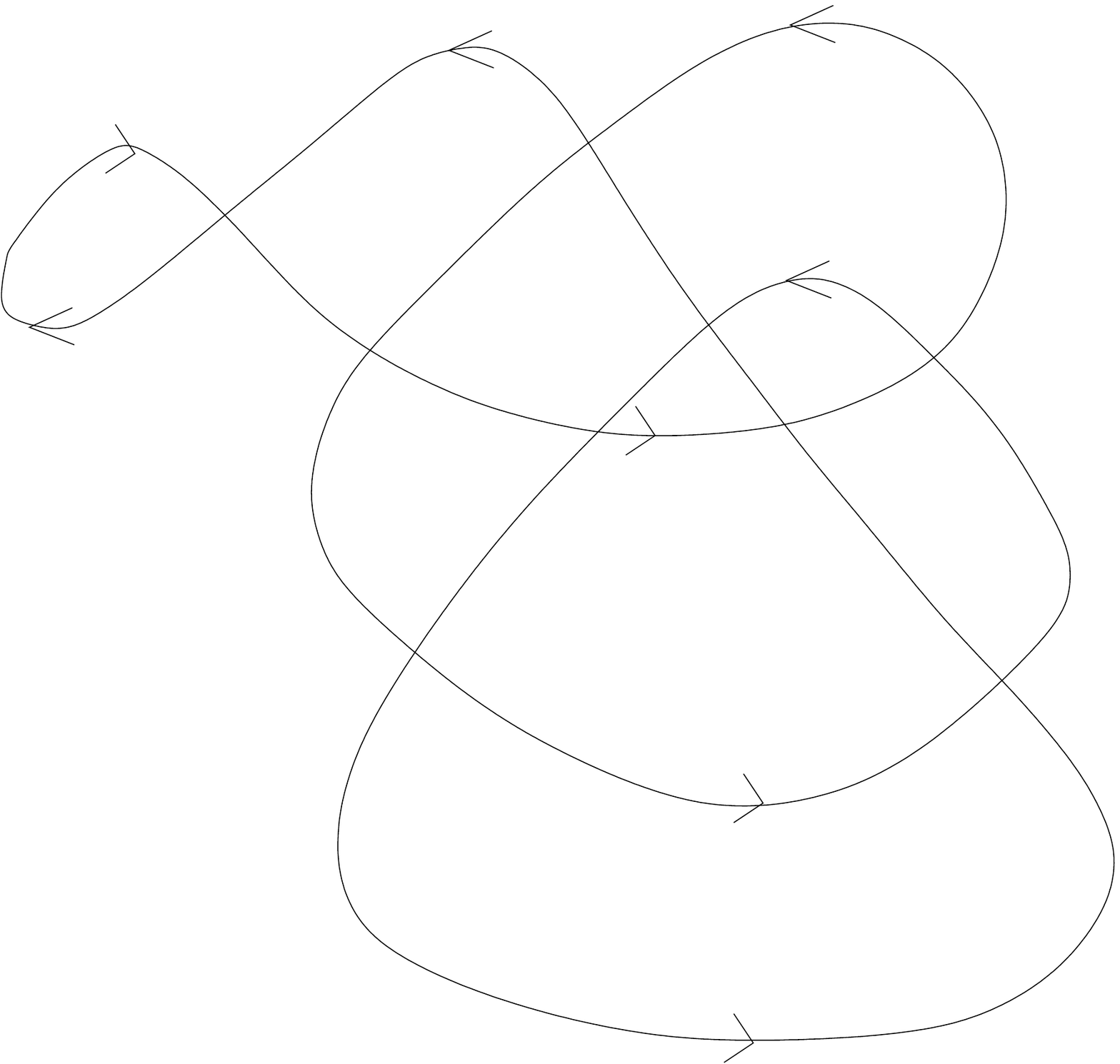}}
\end{picture}

It is easy to prove that in such an oriented curve one has
\begin{myequation}
\label{eq:critical}
\qquad\qquad (\text{\# of left-pointing critical points}) + (\text{\# of double points})
\equiv 1\ \text{mod 2}.
\end{myequation}
Indeed, let $L$ denote the number on the left of the congruence.
Imagine taking a closed loop of string---an unknot---and laying it on
top of the plane containing our oriented curve, in such a way that the string
exactly covers the curve.  This gives us an oriented knot diagram
which is similar to our original picture but in which
every double-point has been changed to an over- or under-crossing.  
One readily checks that the parity of the number $L$ is unchanged
under the Reidemeister moves.  Since our knot diagram is equivalent to
the unknot, this says that the parity of $L$ is the same as the
corresponding number for an oriented circle.  But a circle clearly
gives $L=1$.  

The number of left-pointing critical points in the picture for $F$ is
just the number $n$.  Likewise, the number of double points in the
picture is the parity of the formal composite $f$, which we
have assumed to be even.  So (\ref{eq:critical}) tells us that $n$ is odd.

The Kelly-Laplaza theorem implies that $F(\cC)=\tr(\id_X)$.  So
$f(\cC)=\tr(\id_X)^{n+1}$.  Since $n+1$ is even
$\tr(\id_X)^{n+1}=\id_S$ by Proposition~\ref{pr:tr-inv}, and this
completes the proof in the present case (where the picture for $F$
contains one closed curve).

For the
general case we have $F(\cC)=\tr(\id_X)^e$ where $e$ is the number of
closed curves in the picture.  The analog of
(\ref{eq:critical})---whose proof is the same as before---becomes
\begin{myequation}
\label{eq:critical2}
\qquad (\text{\# of left-pointing critical points}) + (\text{\# of
double points}) \equiv e \text{\ mod 2}.
\end{myequation}
We then obtain $f(\cC)=\tr(\id_X)^{n+e}$, and (\ref{eq:critical2})
yields that $n+e$ is even.  So again we have $f(\cC)=\id_S$, as desired.
\end{proof}

\begin{proof}[Proof of Theorem~\ref{th:multi-coherence2}]
This is a straightforward generalization of the proof of
Theorem~\ref{th:coherence2}, in the same way that
Theorem~\ref{th:multi-coherence1} generalized
Theorem~\ref{th:coherence}.
The main point is that a map from $S$ to $S$ in
$\KL(\{X_1,\ldots,X_n\})$ is represented by a collection of
closed curves, each of which is labelled by one of the $X_i$'s.  The
Kelly-Laplaza theorem identifies such a map with the composite of the
maps obtained by considering each closed curve separately.  The
$i$-parity of our formal composite represents the number of double
points in the curves labelled by $X_i$.  The hypothesis that each of
these parities is even guarantees, just as in the proof of
Theorem~\ref{th:multi-coherence1}, that the specified map in
$\KL(\{X_1,\ldots,X_n\})$ is the identity.
\end{proof}


\section{The main applications: $\Z^n$-graded rings of maps}
\label{se:app}
Assume that $(\cC,\tens,S)$ is an additive category with a symmetric
monoidal structure, where the tensor product is an additive functor in
each variable.  In this section we investigate $\Z^n$-graded groups of
maps in $\cC$.

\medskip

Suppose $X_1,\ldots,X_n\in \cC$ are invertible objects, and let
$(X_i^*,\alpha_i)$ be a specific choice of inverse for $X_i$.  Recall
the definition of $\power{X}{a}$ for every $a\in \Z^n$, from
Section~\ref{se:intro}.
For every $a,b\in \Z^n$ there is a canonical
isomorphism
\[ \phi_{a,b}\colon \power{X}{a}\tens \power{X}{b} \llra{\iso}
\power{X}{a+b}
\]     
specfied by Theorem~\ref{th:multi-coherence1}.  The uniqueness
part of that proposition guarantees that
the pentagonal diagrams (\ref{eq:assoc}) all commute, and
that for $a,b\in \N^n$ these $\phi_{a,b}$'s coincide
with the ones defined in Section~\ref{se:MacLane}.

For any $W\in \cC$ define $\pi_*(W)$ to be the $\Z^n$-graded abelian
group 
\[ \pi_*(W) = \oplus_{a\in \Z^n} \cC(\power{X}{a},W).
\]
Suppose that $U$, $V$, and $W$ are objects and that there is a pairing
$U\tens V\ra W$.  The maps $\phi_{a,b}$ allow us to define a
$\Z^n$-graded pairing $\pi_*(U)\tens \pi_*(V)\ra \pi_*(W)$ as follows.
Suppose $f\colon \power{X}{a}\ra U$ and $g\colon \power{X}{b}\ra V$.
Define the product $f\cdot g$ to be the composite
\[ \power{X}{a+b}\llra{\phi_{a,b}^{-1}} \power{X}{a}\tens \power{X}{b}
\llra{f\tens g} U\tens V \lra W.
\]

\begin{prop}  
\label{pr:Z-ring}
Let $U$ be a monoid with respect to $\tens$.  
\begin{enumerate}[(a)]
\item $\pi_*(U)$ is a $\Z^n$-graded ring (associative and unital).
\item If $V$ is a left (resp. right) module over $U$ then $\pi_*(V)$
is a left (resp. right) module over $\pi_*(U)$.  
\item If $U$ is a commutative monoid then $\pi_0(U)$ is central in $\pi_*(U)$.
\end{enumerate}
\end{prop}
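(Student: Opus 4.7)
The plan is to follow the template of Proposition~\ref{pr:R} almost verbatim, using Theorem~\ref{th:multi-coherence1} in place of MacLane's classical coherence theorem. Once one has the canonical isomorphisms $\phi_{a,b}\colon \power{X}{a}\tens \power{X}{b}\llra{\iso}\power{X}{a+b}$ satisfying the pentagon (\ref{eq:assoc}) for all $a,b,c\in \Z^n$, the rest of the proof is essentially bookkeeping. The heavy lifting has already been done in Section~\ref{se:coherence}; the main ``obstacle'' is coherence of the $\phi$'s, which is exactly what those coherence theorems were proved to overcome.

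For (a), distributivity is immediate from biadditivity of $\tens$ in each variable, just as in Proposition~\ref{pr:R}. For associativity, I would observe that given $f\in \pi_a(U)$, $g\in \pi_b(U)$, $h\in \pi_c(U)$, both $(fg)h$ and $f(gh)$ unwind to the composite obtained by precomposing $f\tens g\tens h$ with either of the two routes around the pentagon (\ref{eq:assoc}) and then postcomposing with the (unambiguous) triple multiplication $U\tens U\tens U\ra U$. The pentagon commutes by Theorem~\ref{th:multi-coherence1} and $U$ is associative, so the two products agree. For the unit I would take the monoid unit $\eta\colon S\ra U$ as an element of $\pi_0(U)$; uniqueness in Theorem~\ref{th:multi-coherence1} forces $\phi_{a,0}$ and $\phi_{0,a}$ to equal the right and left unital isomorphisms $\power{X}{a}\tens S\iso \power{X}{a}\iso S\tens \power{X}{a}$, so $f\cdot \eta=f=\eta\cdot f$.

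Part (b) is the same argument with the action map $U\tens V\ra V$ (or $V\tens U\ra V$) replacing the monoid multiplication, and with associativity and unitality of the action replacing those of $U$.

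For (c), given $u\colon S\ra U$ in $\pi_0(U)$ and $f\colon \power{X}{a}\ra U$ in $\pi_a(U)$, I would repeat the diagram at the end of the proof of Proposition~\ref{pr:R}: the top row computes $f\cdot u$ through $\power{X}{a}\tens S\llra{f\tens u}U\tens U\ra U$, the bottom row computes $u\cdot f$ through $S\tens \power{X}{a}\llra{u\tens f}U\tens U\ra U$, and the two rows are linked on the left by the twist $\power{X}{a}\tens S\iso S\tens \power{X}{a}$ and on the right by the twist on $U\tens U$. Naturality of the twist handles the left square and commutativity of $U$ handles the right. The only point that requires comment is that because one side of the twist $\power{X}{a}\tens S\ra S\tens \power{X}{a}$ is the unit $S$, this map is a composite of unital isomorphisms rather than a genuine self-braiding, and therefore falls within the class of maps allowed in Theorem~\ref{th:multi-coherence1}; in particular no appeal to the subtler Theorem~\ref{th:multi-coherence2} is needed here.
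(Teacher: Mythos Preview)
Your proposal is correct and follows essentially the same approach as the paper: distributivity from biadditivity of $\tens$, associativity from the pentagon~(\ref{eq:assoc}) guaranteed by Theorem~\ref{th:multi-coherence1}, unitality from $\phi_{a,0}$ and $\phi_{0,a}$ being the unital isomorphisms, and centrality of $\pi_0(U)$ via the same twist diagram as in Proposition~\ref{pr:R}. Your added remark that the twist $\power{X}{a}\tens S\ra S\tens\power{X}{a}$ is built from unital isomorphisms (so no self-twist coherence is needed) is a nice clarification that the paper leaves implicit.
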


\begin{proof}
The proofs of (a) and (b) are the same: distributivity is
automatic, and associativity follows from the commutativity of the
diagram (\ref{eq:assoc}) involving the $\phi_{a,b}$'s.  The unit
conditions follow as in the proof of  Proposition~\ref{pr:R}.

For (c), let $f\colon \power{X}{a}\ra U$ and $g\colon S\ra U$.  The
following diagram is commutative:
\[ \xymatrix{
\power{X}{a}\ar[r]\ar[dr] & \power{X}{a}\tens S \ar[d]_t\ar[r]^{f\tens
g} & U\tens U \ar[d]_t\ar[r]_\mu & U \\
& S\tens \power{X}{a} \ar[r]^{g\tens f} & U\tens U \ar[ur]_\mu
}
\]
The composite across the top is $f\cdot g$, and the composite across the
bottom is $g\cdot f$.  Commutativity of the diagram shows these are equal.
\end{proof}

\subsection{Representation of elements in $\pi_*(S)$  by maps in $\cC$}
Let $w_1$ and $w_2$ be two tensor words in the symbols $X_i^{\pm 1}$,
and suppose that $f\colon w_1\ra w_2$ is a map.
Theorem~\ref{th:multi-coherence1} gives canonical isomorphisms
$\power{X}{a} \ra w_1$ and $\power{X}{b}\ra w_2$ for unique $a,b\in
\Z^n$.  From now on we will denote all canonical isomorphisms provided
by Theorem~\ref{th:multi-coherence1} by $\phi$.  (A consequence of
this is that a canonical map and its inverse---which is also
canonical---are sometimes both denoted by $\phi$; in practice this does
not lead to much confusion, though.)  Let $\ab{f}$
denote the composite 
\[ \power{X}{a} \llra{\phi} w_1 \llra{f} w_2 \llra{\phi} \power{X}{b}.
\]
There are two evident ways to obtain an element of $\pi_*(S)$ from
$\ab{f}$.    Let $[f]_r\in \pi_{a-b}(S)$ be the composite
\[ \power{X}{a-b} \llra{\phi} \power{X}{-b}\tens \power{X}{a}
\llra{\id \tens \ab{f}} \power{X}{-b}\tens \power{X}{b} \llra{\phi} S.
\]
and let $[f]_l\in \pi_{a-b}(S)$ be the composite
\[ \power{X}{a-b} \llra{\phi} \power{X}{a}\tens \power{X}{-b}
\llra{\ab{f}\tens \id} \power{X}{b}\tens \power{X}{-b} \llra{\phi} S.
\]
In general one must be careful, as $[f]_r$ and $[f]_l$ need not be the
same element.  We will give a precise formula for relating them in
Proposition~\ref{pr:l-r-general} below, but it
will take some work to build up to this.  We start with some simple
observations:

\begin{prop}
\label{pr:new}
Let $w_1$, $w_2$, and $w_3$ be three tensor words that are formally
isomorphic to  $\power{X}{a}$, $\power{X}{b}$, and $\power{X}{c}$,
respectively.  
Let $f\colon w_1\ra w_2$ and $g\colon w_2\ra w_3$.
\begin{enumerate}[(a)]
\item $[\id_{w_3}\tens f]_r=[f]_r$ and $[f\tens \id_{w_3}]_l=[f]_l$,
\item $[gf]_r=[g]_r\cdot [f]_r$.  
\end{enumerate}
\end{prop}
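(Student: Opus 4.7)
The plan is to reduce both parts to Theorem~\ref{th:multi-coherence1} via bifunctoriality of $\tens$. The preliminary fact used throughout is the identity $\ab{gf}=\ab{g}\circ\ab{f}$, which follows immediately on writing out the left-hand side and cancelling the interior $\phi\circ\phi^{-1}$ pair.

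For part (a), both $[\id_{w_3}\tens f]_r$ and $[f]_r$, when unfolded, take the form (canonical iso)$\circ$($\id\tens f$ in an appropriate position)$\circ$(canonical iso). Specifically, $[\id_{w_3}\tens f]_r = p_2\circ(\id_{\power{X}{-(c+b)}}\tens\id_{w_3}\tens f)\circ p_1$ and $[f]_r = p'_2\circ(\id_{\power{X}{-b}}\tens f)\circ p'_1$, where the $p_i, p'_i$ are composites of canonical isomorphisms. Bifunctoriality of $\tens$, applied to the canonical isomorphism $\power{X}{-(c+b)}\tens w_3 \iso \power{X}{-b}$ together with $\id_{w_j}$, supplies a commutative square intertwining $\id\tens\id\tens f$ with $\id\tens f$; splicing this into $[\id_{w_3}\tens f]_r$ converts it to $q_2\circ(\id_{\power{X}{-b}}\tens f)\circ q_1$ where $q_1, q_2$ are again composites of canonical isomorphisms. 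By Theorem~\ref{th:multi-coherence1}, any two composites of canonical isomorphisms with the same source and target agree, so $q_i = p'_i$, yielding $[\id_{w_3}\tens f]_r=[f]_r$. The second identity $[f\tens\id_{w_3}]_l=[f]_l$ follows by the mirror argument on the opposite factor.

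For part (b), expand $[g]_r\cdot[f]_r$ fully to obtain a composite of the form (canonical iso)$\circ[(\id\tens\ab{g})\tens(\id\tens\ab{f})]\circ$(canonical iso) from $\power{X}{a-c}$ to $S$. Bifunctoriality lets us split the middle map into two sequential pieces acting on disjoint tensor factors: first $\id\tens(\id\tens\ab{f})$ landing in $(\power{X}{-c}\tens\power{X}{b})\tens(\power{X}{-b}\tens\power{X}{b})$, then $(\id\tens\ab{g})\tens\id$. Now insert the canonical contraction $(\power{X}{-c}\tens\power{X}{b})\tens(\power{X}{-b}\tens\power{X}{b})\iso \power{X}{-c}\tens\power{X}{b}$ and its inverse between the two pieces; bifunctoriality yields commutative squares that intertwine this contraction with each of the two split pieces, transforming the total composite into (canonical iso)$\circ(\id\tens\ab{g})\circ(\id\tens\ab{f})\circ$(canonical iso). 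The two interior arrows compose to $\id\tens\ab{gf}$ by the preliminary observation, and Theorem~\ref{th:multi-coherence1} identifies the outer canonical chains with those in $[gf]_r$.

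The main obstacle is bookkeeping: each rearrangement must be explicitly grounded in either bifunctoriality of $\tens$ or in an identification of two canonical-isomorphism chains via Theorem~\ref{th:multi-coherence1}, with care taken not to circularly invoke the identity being proved. Once intermediate tensor words are tracked precisely, both parts reduce to diagram chases of this form.
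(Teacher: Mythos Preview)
Your proposal is correct and follows essentially the same approach as the paper: both arguments reduce to diagram chases that combine bifunctoriality of $\tens$ with the uniqueness clause of Theorem~\ref{th:multi-coherence1}. The only cosmetic difference is that for part (b) the paper introduces an intermediate map $H$ and shows $[g]_r\cdot[f]_r=[g]_r\circ H=[gf]_r$ via two separate diagrams, whereas you transform $[g]_r\cdot[f]_r$ directly into $[gf]_r$ in a single chain; the underlying manipulations are the same.
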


\begin{proof}
For part (a) first note that $\ab{\id_{w_3}\tens f}=\id_c\tens
\ab{f}$, by an easy argument.  Next use the following diagram, where
we have suppressed some tensor signs for typograhical reasons:
\[\xymatrixcolsep{2.3pc}
\xymatrix{
\power{X}{-b-c}\power{X}{a+c} \ar[r]^-\phi &
\power{X}{-b-c}\power{X}{c}\power{X}{a} \ar[d]^{\phi\tens
\id}\ar[r]^{1\tens 1\tens \ab{f}} &
\power{X}{-b-c}\power{X}{c}\power{X}{b} \ar[d]^{\phi\tens
\id}\ar[r]^{\id\tens \phi} &
\power{X}{-b-c}\power{X}{b+c} \ar[d]^\phi \\
\power{X}{a-b} \ar[u]^\phi\ar[r]^\phi & \power{X}{-b}\power{X}{a}
\ar[r]^{1\tens \ab{f}} &
\power{X}{-b}\power{X}{b} \ar[r]^\phi & S.
}
\]
The three squares are readily checked to commute; in the case of the 
outer ones this is by Theorem~\ref{th:multi-coherence1}.  The
composite from $\power{X}{a-b}$ to $S$ across the `top' of the diagram
is $[\id_{w_3}\tens f]_r$,  and
the composite across the bottom is $[f]_r$.  The argument showing
$[f\tens \id_{w_3}]_l=[f]_l$ is entirely similar.

For (b) we first examine the commutative diagram
\[ \xymatrixcolsep{2.5pc}\xymatrix{
\power{X}{a-c} \ar[r]^-{\phi_1} & \power{X}{b-c}\tens\power{X}{a-b}
\ar[r]^{\id\tens [f]_r} \ar[dr]_{[g]_r\tens[f]_r}&
\power{X}{b-c}\tens S\ar[r]^-{\phi_2}\ar[d]^{[g]_r\tens \id} & \power{X}{b-c}\ar[d]^{[g]_r} \\
&&S\tens S\ar[r] & S.
}
\]
Let $H=\phi_2\circ (\id\tens [f]_r)\circ \phi_1$.  The composite
across the `bottom' of the diagram is $[g]_r\cdot [f]_r$, so we have
$[g]_r\cdot [f]_r=[g]_r\circ H$.  

Next consider the following diagram:
\[ \xymatrixcolsep{2.4pc}\xymatrix{
\power{X}{a-c}\ar[r]^-\phi \ar[d]_{\phi_1}& \power{X}{-c}
\power{X}{a}\ar[r]^{\id\tens \ab{f}}
&\power{X}{-c} \power{X}{b} \ar[r]^{\id\tens \ab{g}} &
\power{X}{-c} \power{X}{c}\ar[r]^-\phi & S \\
\power{X}{b-c}\power{X}{a-b}\ar[r]^-{\id\tens \phi}
& \power{X}{b-c}\power{X}{-b} \power{X}{a} \ar[u]_{\phi\tens \id}
\ar[r]^{1\tens 1\tens \ab{f}} &
\power{X}{b-c} \power{X}{-b} \power{X}{b}\ar[u]^{\phi\tens
\id}\ar[r]^-{\id\tens \phi}
&\power{X}{b-c}S \ar[r]_-{\phi_2}&\power{X}{b-c}\ar[u]^{[g]_r}\ar[ull]^\phi
}
\]
All of the regions of the diagram commute, in two cases by
Theorem~\ref{th:multi-coherence1}.  The composite across the top
row is $[gf]_r$.  The composite across the bottom edge from
$\power{X}{a-c}$ to $\power{X}{b-c}$ is the map $H$.  So the diagram
shows that $[gf]_r=[g]_r\circ H$, and the latter equals $[g]_r\cdot
[f]_r$ by the preceding paragraph.   
\end{proof}

\begin{remark}
It is informative to check that the argument for part (b) of
Proposition~\ref{pr:new} does not dualize to prove $[gf]_l=[g]_l\cdot
[f]_l$.  The reason comes down to the fact that the formula $g\tens
f=(g\tens \id)\circ (\id\tens f)$ has the identity tensored on the {\it left\/}
side of the $f$.  The dual argument shows $[gf]_l=[f]_l\cdot [g]_l$,
although we will not need this fact.  
\end{remark}

Our next task is to focus on the case where $f\colon w_1\ra w_2$ and
$w_1\iso w_2\iso \power{X}{a}$.  
Note that in this case $\ab{f}$ is a map $\power{X}{a}\ra
\power{X}{a}$ and so we also have the invariants $D(\ab{f})$ and
$\tr(\ab{f})$, which like $[f]_l$ and $[f]_r$ are elements of $\pi_0(S)$.  
The following result gives the relation between all of these
constructions:

\begin{prop}
\label{pr:self-map}
Let $w_1$ and $w_2$ be two words that are formally isomorphic to
$\power{X}{a}$, for $a\in \Z^n$.  Let $f\colon w_1\ra w_2$ be a map.  
\begin{enumerate}[(a)]
\item $[f]_r=[f]_l=D(\ab{f})$, and $\tr(\ab{f})=D(f)\cdot
\tr(\id_{\power{X}{a}})$.  
\item $[\id_c\tens f]_r=[f\tens\id_c]_r=[f]_r$, for any $c\in \Z^n$.
\item For a canonical isomorphism $\phi\colon w_1\ra w_2$ (as provided
by Theorem~\ref{th:multi-coherence1}) one has $[\phi]_r=\id_S$.  
\end{enumerate}
\end{prop}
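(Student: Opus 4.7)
The plan is to deduce all three parts from the fact that $\ab{f}$ is a self-map of the invertible object $\power{X}{a}$, together with the uniqueness clause in Theorem~\ref{th:multi-coherence1}.

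First I would handle part (a). Since $\ab{f}\colon \power{X}{a}\ra \power{X}{a}$ is a self-map of an invertible object, Proposition~\ref{pr:inv-end} lets us write $\ab{f}=u\ctens \id_{\power{X}{a}}$ where $u=D(\ab{f})\in \End(S)$. Unwinding the definition of $[f]_r$, it is the composite
\[ \power{X}{0} \llra{\phi} \power{X}{-a}\tens \power{X}{a}
\llra{\id\tens \ab{f}} \power{X}{-a}\tens \power{X}{a} \llra{\phi} \power{X}{0}. \]
Using Remark~\ref{re:move-around} I can pull the factor $u$ out of $\id\tens \ab{f}=u\ctens \id$, which reduces $[f]_r$ to $u$ times the composite $\power{X}{0}\llra{\phi}\power{X}{-a}\tens \power{X}{a}\llra{\phi}\power{X}{0}$. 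Both $\phi$'s are canonical, so their composite is a canonical self-isomorphism of $\power{X}{0}=S$ and must be $\id_S$ by uniqueness in Theorem~\ref{th:multi-coherence1}. Hence $[f]_r=u=D(\ab{f})$. The same argument with the tensor factors reversed gives $[f]_l=D(\ab{f})$ as well. The trace identity then follows at once by applying Proposition~\ref{pr:D-tr} to $\ab{f}$.

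Next, for part (b), the equation $[\id_c\tens f]_r=[f]_r$ is immediate from Proposition~\ref{pr:new}(a). For the other half, I would reduce to the $D$-invariant: by part (a) we have $[f\tens \id_c]_r=D(\ab{f\tens \id_c})$. The map $\ab{f\tens \id_c}$ is constructed from $f\tens \id_c$ by pre- and post-composition with canonical isomorphisms $\power{X}{a+c}\ra w_1\tens \power{X}{c}$ and $w_2\tens \power{X}{c}\ra \power{X}{a+c}$, while $\ab{f}\tens \id_{\power{X}{c}}$ is constructed similarly but factored through $\power{X}{a}\tens \power{X}{c}$. By Theorem~\ref{th:multi-coherence1}, the relevant diagram involving these canonical isomorphisms commutes, so $\ab{f\tens \id_c}$ is related to $\ab{f}\tens \id_{\power{X}{c}}$ by conjugation through a canonical isomorphism. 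Lemma~\ref{le:trace}(a) then gives $D(\ab{f\tens \id_c})=D(\ab{f}\tens \id_{\power{X}{c}})$, and Lemma~\ref{le:trace}(b) gives $D(\ab{f}\tens \id_{\power{X}{c}})=D(\ab{f})=[f]_r$.

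Finally, for part (c), if $\phi\colon w_1\ra w_2$ is a canonical isomorphism, then $\ab{\phi}$ is the composite of three canonical isomorphisms, hence itself a canonical self-isomorphism of $\power{X}{a}$. By uniqueness in Theorem~\ref{th:multi-coherence1} it equals $\id_{\power{X}{a}}$, and therefore $[\phi]_r=D(\id_{\power{X}{a}})=\id_S$ by part (a).

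The main subtlety I expect is in verifying that $[f]_l=D(\ab{f})$ in part (a), because the definition of $D$ from Section~\ref{se:inv} was set up specifically using an inverse $\alpha\colon S\ra Y\tens X$ (left tensor factor), whereas $[f]_l$ is built from an isomorphism $S\ra \power{X}{a}\tens \power{X}{-a}$ with the opposite tensor ordering. The resolution is that $D$ is independent of the chosen inverse and, after twisting, the $\phi$ used in the definition of $[f]_l$ exhibits $\power{X}{-a}$ as a valid inverse for $\power{X}{a}$; once this is noted, the ``pull out $u$'' argument applies verbatim. The remaining work throughout the proof is just bookkeeping with canonical isomorphisms, legitimized by Theorem~\ref{th:multi-coherence1}.
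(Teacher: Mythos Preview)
Your argument is correct, and for part~(a) it is actually more direct than the paper's. The paper first establishes $[f]_r=[f]_l$ by inserting twist maps $t_{-a,a}$ into the defining diagrams to obtain $[f]_r = T_{a,-a}\circ [f]_l\circ T_{-a,a}$ and then invoking commutativity of $\End(S)$; only afterwards does it identify $[f]_l$ with $D(\ab{f})$ via a second, larger diagram comparing $[f]_l\tens\id_{\power{X}{a}}$ with $\ab{f}\tens\id_S$. Your route bypasses the twist maneuver entirely: writing $\ab{f}=u\ctens\id$ and pulling $u$ through the composite reduces both $[f]_r$ and $[f]_l$ to $u$ times a composite of canonical maps, which is the identity by coherence. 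This is cleaner and makes the role of Theorem~\ref{th:multi-coherence1} more transparent. One small point: the assertion $\id\tens\ab{f}=u\ctens\id$ is not literally an instance of Remark~\ref{re:move-around} (which concerns composites, not tensors); it follows instead from Lemma~\ref{le:trace}(b), or equivalently from observing that $[f]_r$ is by definition $D_{\power{X}{-a}}(\ab{f})$ computed with the inverse $(\power{X}{-a},\phi)$, and $D$ is independent of the inverse chosen. For parts~(b) and~(c) your arguments match the paper's.
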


Before giving the proof let us introduce one more important
definition.  For $a,b\in \Z^n$ we have the twist map $t_{a,b}\colon
\power{X}{a}\tens \power{X}{b} \ra \power{X}{b}\tens \power{X}{a}$.
We write $T_{a,b}=\ab{t_{a,b}}$, which is a map $\power{X}{a+b}\ra
\power{X}{a+b}$.  It is easy to check that $T_{a,b}\circ
T_{b,a}=\id$.  Note that $T_{a,-a}$ is a map $S\ra S$.  

\begin{proof}[Proof of Proposition~\ref{pr:self-map}]
For (a) we consider the following diagram:
\[ \xymatrix{
S \ar[dr]\ar[r] & \power{X}{-a}\tens \power{X}{a} \ar[r] \ar[d]^{t_{-a,a}} & \power{X}{-a}\tens w_1
\ar[r]^{\id\tens f} \ar[d]^t &\power{X}{-a}\tens w_2 \ar[r]\ar[d]^t & \power{X}{-a}\tens
\power{X}{a} \ar[d]_{t_{-a,a}}\ar[r] & S \\
& \power{X}{a}\tens \power{X}{-a} \ar[r] & w_1\tens \power{X}{-a}
\ar[r]^{f\tens \id} & w_2\tens \power{X}{-a} \ar[r] & \power{X}{a}\tens
\power{X}{-a} \ar[ur]
}
\]
where all unlabelled maps are canonical isomorphisms (i.e., they
should be labelled with $\phi$).  The three squares are commutative,
but the triangles on the two ends are not; the automorphism of $S$
obtained by moving around one of these triangles is either $T_{a,-a}$
or $T_{-a,a}$, depending on which direction the composite is taken.
The composite across the top of the diagram is $[f]_r$ and the composite across the
bottom is $[f]_l$.  The diagram thus yields the formula
\[ [f]_r = T_{a,-a} \circ [f]_l \circ T_{-a,a}.
\]
But this formula takes place in the monoid $\End(S)$, which is
commutative by Proposition~\ref{pr:inv-end}.  So we obtain
$[f]_r=[f]_l\circ T_{a,-a}\circ T_{-a,a}=[f]_l\circ \id_S=[f]_l$.

Next consider the diagram
\[ \xymatrixcolsep{2.3pc}\xymatrix{
S\tens \power{X}{a} \ar[d]_{\phi}\ar[r]^-{\phi\tens \id} 
& \power{X}{a} \tens \power{X}{-a} \tens
\power{X}{a} \ar[r]^{\phi\tens\id\tens \id}\ar[dl] & w_1 \tens \power{X}{-a} \tens \power{X}{a}
\ar[r]^-{f\tens \id\tens \id}
\ar[dl] & w_2\tens \power{X}{-a} \tens \power{X}{a}
\ar[dl]\ar[d]^-{\phi\tens\id\tens \id}\\
\power{X}{a}\tens S \ar[r]_-{\phi\tens\id} & w_1\tens S \ar[r]_{f\tens\id} & w_2\tens S\ar[d]_{\phi\tens\id} & 
\power{X}{a}\tens \power{X}{-a}\tens \power{X}{a}
\ar[d]^-{\phi\tens\id\tens\id}\ar[dl] \\
&&\power{X}{a}\tens S \ar[r]_{\phi}& S\tens \power{X}{a}.
}
\]
The diagonal maps are all equal to the identity on the left tensor
factor and the canonical isomorphism $\phi\colon \power{X}{a}\tens
\power{X}{-a}\ra S$ on the other two factors.  All of the `squares'
obviously commute in the diagram, and the triangles on the two ends
commute by Theorem~\ref{th:multi-coherence1} since all the maps
are canonical isomorphisms.  The composition across the `top' of the
diagram equals $[f]_l\tens \id_{\power{X}{a}}$.  Condensing the
diagram to its outer rim yields the commutative square
\[ \xymatrix{
S\tens \power{X}{a} \ar[r]^{[f]_l\tens \id}\ar[d]_\phi &
S\tens \power{X}{a} \ar[d]^\phi \\
\power{X}{a}\tens S \ar[r]^{\ab{f}\tens \id_S} & \power{X}{a}\tens S.
}
\]
By Lemma~\ref{le:trace}(a) the top and bottom maps have the same
$D$-invariant, and the $D$-invariant of the bottom map is also that of
$\ab{f}$ (using the unital isomorphism).  Finally,  $D([f]_l\tens \id)=D([f]_l)=[f]_l$ by
Lemma~\ref{le:trace}(b).  
This ends the
proof of (a).

Part (b) follows immediately from (a) and Proposition~\ref{pr:new}(a).
Part (c) is a consequence of coherence: the composite
\[ \power{X}{a}\llra{\phi_1} w_1 \llra{\phi} w_2 \llra{\phi_2}
\power{X}{a} 
\]
is a canonical map and must therefore equal the identity by
Theorem~\ref{th:multi-coherence1}.  So $\ab{\phi}=\id_{\power{X}{a}}$
and 
$[\phi]_r=D(\id_{\power{X}{a}})=\id_S$.  
\end{proof}

The elements $T_{a,b}\in \Aut(S)$ are, of course, ubiquitous in
calculations.  We define
\[ \tau_{a,b}=[t_{a,b}]_l=[t_{a,b}]_r=D(T_{a,b})\in \pi_0(S),
\]
where the second two equalities are by Proposition~\ref{pr:self-map}.
Recall that we have the basic commuters $\tau_i=\tr(\id_{X_i})\in
\pi_0(S)$ and these satisfy $\tau_i^2=1$ by
Proposition~\ref{pr:tr-inv}.  Recall as well that
$\tau_i=D(t_{X_i,X_i})$ by Proposition~\ref{pr:tr=D}.  If
$e_1,\ldots,e_n$ is the standard basis for $\Z^n$ then this just says that
$\tau_i=\tau_{e_i,e_i}$.
Let us also point out that if $i\neq j$ then $\tau_{e_i,e_j}=\id_S$;
in fact $T_{e_i,e_j}$ is the composite
\[ \power{X}{e_i+e_j}\llra{\phi} X_i\tens X_j \llra{t_{X_i,X_j}}
X_j\tens X_i \llra{\phi} \power{X}{e_i+e_j}
\]
and this equals the identity map either by
Theorem~\ref{th:multi-coherence2} or by just looking at the definitions.
Quite generally,
we can express all of the elements $\tau_{a,b}$ in terms of the basic
commuters:

\begin{prop}
\label{pr:tau-ab}
For all $a,b\in \Z^n$ one has $\tau_{a,b}=\tau_1^{(a_1b_1)}\cdots
\tau_n^{(a_nb_n)}$. 
\end{prop}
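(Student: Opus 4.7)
The plan is to appeal directly to Theorem~\ref{th:multi-coherence2}: both sides of the equation are automorphisms of $S$ that arise as formal composites built from the moves allowed in that theorem, so it suffices to exhibit such formal composites and to verify that their $j$-parities agree modulo $2$ for every $j = 1, \ldots, n$.

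For the right-hand side I would use that $\tau_i = \tr(\id_{X_i})$ is by definition the composite
\[
S \llra{\alpha_i} X_i^* \tens X_i \llra{t_{X_i^*, X_i}} X_i \tens X_i^* \llra{\hat{\alpha}_i} S,
\]
which as a formal composite uses exactly one self-twist on $X_i$ and no other self-twists. Concatenating $|a_i b_i|$ such composites (using formal inverses when the exponent is negative, which does not affect parity) yields a formal composite for $\tau_1^{a_1 b_1}\cdots \tau_n^{a_n b_n}$ whose $j$-parity is $|a_j b_j| \equiv a_j b_j \pmod{2}$.

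For the left-hand side I would expand $\tau_{a,b} = D(T_{a,b}) = D(\ab{t_{a,b}})$ directly from the definitions. Each canonical isomorphism $\phi$ appearing in this expansion is, by Theorem~\ref{th:multi-coherence1}, expressible using only associativity, unital, $\alpha_i$, $\hat{\alpha}_i$, and commutativity isomorphisms $t_{X_i^{\pm},X_j^{\pm}}$ with $i \neq j$; such maps contribute $0$ to every $j$-parity. Hence every self-twist in the resulting formal composite comes from decomposing the single block twist $t_{a,b}\colon \power{X}{a}\tens \power{X}{b}\ra \power{X}{b}\tens \power{X}{a}$. I would decompose $t_{a,b}$ via the standard shuffle expressing a block twist as a sequence of adjacent transpositions of single letters: one adjacent transposition is needed for each pair consisting of a letter from the $\power{X}{a}$ block and a letter from the $\power{X}{b}$ block. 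Such a transposition contributes to the $j$-parity precisely when both of its letters are of the form $X_j^{\pm 1}$, and the number of such pairs is $|a_j|\cdot|b_j|$. Therefore this formal composite has $j$-parity $|a_j||b_j| \equiv a_j b_j \pmod{2}$.

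Since the two formal composites have matching $j$-parities modulo $2$ for every $j$, Theorem~\ref{th:multi-coherence2} identifies the underlying maps in $\cC$, giving the desired equality. The main obstacle is the combinatorial step in the previous paragraph --- writing down an honest shuffle decomposition of $t_{a,b}$ and checking that it uses exactly $|a_j||b_j|$ self-twists of type $X_j$ --- but this is a standard fact about block twists in symmetric monoidal categories, following from MacLane coherence applied to associativity together with the definition of the twist between tensor words of single factors.
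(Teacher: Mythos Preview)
Your proposal is correct and follows essentially the same approach as the paper: both arguments express $T_{a,b}$ via a shuffle decomposition of the block twist into adjacent transpositions (counting $|a_j|\cdot|b_j|$ self-twists of type $j$), express the right-hand side as a formal composite with matching $j$-parities, and then invoke Theorem~\ref{th:multi-coherence2}. The only cosmetic difference is that the paper compares the two formal composites as self-maps of $\power{X}{a+b}$ and then takes the $D$-invariant at the end, whereas you unwind the $D$-invariant first and compare self-maps of $S$ directly; this makes no substantive difference.
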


\begin{proof}
Recall that $T_{a,b}$ is the composite $\power{X}{a+b}\llra{\phi}
\power{X}{a}\tens \power{X}{b} \llra{t_{a,b}} \power{X}{b}\tens
\power{X}{a} \llra{\phi} 
\power{X}{a+b}$.  Observe that we can also obtain this map as a long
composite
\begin{myequation}
\label{eq:tab1}
 \power{X}{a+b}\ra w_1 \ra w_2 \ra \cdots \ra w_N \ra \power{X}{a+b}
\end{myequation}
where each $w_k$ is a tensor word in the $X_i^{\pm 1}$'s and each map
is either
\begin{enumerate}[(1)]
\item a canonical isomorphism $\phi$ (as provided by
Theorem~\ref{th:multi-coherence1}),
\item a tensor product of $t_{X_i,X_i}$ with identity maps, 
\item a tensor product of $t_{X_i,X_i^{-1}}$ with identity maps, or
\item a tensor product of $t_{X_i^{-1},X_i^{-1}}$ with identity maps.
\end{enumerate}
In the `standard'
way to obtain such a composite the number of transpositions of types
(2)--(4) will be $|a_ib_i|$, for any chosen value of $i$.  
Let $f\colon S\ra S$ be the map $\prod_i
\tau_i^{(a_ib_i)}$, noting that only the parity of $a_ib_i$ matters in
the exponent since $\tau_i^2=\id_S$.  Consider the composite
\begin{myequation}
\label{eq:tab2} \power{X}{a} \llra{\phi} S\tens \power{X}{a} \llra{f\tens \id}
S\tens \power{X}{a} \llra{\phi}\power{X}{a}.
\end{myequation}
It follows from Theorem~\ref{th:multi-coherence2} that the
composites in (\ref{eq:tab1}) and (\ref{eq:tab2}) are the same,
because by construction they have the same $i$-parity for every $i$.  
Consequently, the $D$-invariant of the two composites  is the same.  But the
$D$-invariant of (\ref{eq:tab2}) is manifestly equal to the map $f$.
We have thus proven that $f=D(T_{a,b})=\tau_{a,b}$.  
\end{proof}

\begin{remark}
\label{re:tau}
As a consequence of Proposition~\ref{pr:tau-ab} and the fact that
$\tau_i^2=1$ note that we have
$\tau_{a,b}\tau_{a,c}=\tau_{a,b+c}=\tau_{a,b-c}$.  Likewise,
$\tau_{a,b}=\tau_{-a,b}=\tau_{b,a}$.   Identities such as these
will often be used.
\end{remark}

Before proceeding further we need a lemma, which is easy but worth recording:

\begin{lemma} 
\label{le:dumb}
Consider composable maps $\power{X}{a} \llra{f}
\power{X}{a} \llra{g} S \llra{h} S$.  Then as elements of $\pi_*(S)$
one has $hg=h\cdot g=g\cdot h$ and $gf=g\cdot D(f)$.  More generally, we can
write $hgf=h\cdot g\cdot D(f)=g\cdot h\cdot D(f)$.  
\end{lemma}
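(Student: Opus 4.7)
The plan is to translate each side of each equation back into the $\ctens$-notation of Section~\ref{se:inv} and then apply Lemma~\ref{le:ctens} and Remark~\ref{re:move-around2}. The key observation is that the product in $\pi_*(S)$ is literally $\ctens$ when one factor has degree zero: for $h\in\pi_0(S)=\End(S)$ and $g\in\pi_a(S)$ the canonical isomorphism $\phi_{0,a}\colon S\tens\power{X}{a}\ra\power{X}{a}$ is by construction the unital isomorphism, so the product $h\cdot g$ is exactly the composite defining $h\ctens g$, namely $\power{X}{a}\iso S\tens\power{X}{a}\llra{h\tens g}S\tens S\iso S$.

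First I would prove $hg=h\cdot g$. By the observation above, $h\cdot g=h\ctens g$, and by Lemma~\ref{le:ctens} we have $h\ctens g=(h\ctens\id_S)\circ g=h\circ g=hg$. Next, $g\cdot h$ similarly equals $g\ctens h$, and Lemma~\ref{le:ctens} gives $g\ctens h=h\ctens g=h\cdot g$; this yields $hg=h\cdot g=g\cdot h$. (Alternatively, commutativity of $\pi_0(S)$ with the rest of $\pi_*(S)$ is just Proposition~\ref{pr:Z-ring}(c) applied to $U=S$, but pushing it through $\ctens$ is no harder.)

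Next I would tackle $gf=g\cdot D(f)$. Here $f\colon\power{X}{a}\ra\power{X}{a}$ is a self-map of an invertible object, and by the discussion preceding Remark~\ref{re:move-around2} (together with Proposition~\ref{pr:inv-end}) it is characterized by the identity $f=D(f)\ctens\id_{\power{X}{a}}$. Therefore
\[ gf=g\circ\bigl(D(f)\ctens\id_{\power{X}{a}}\bigr)=g\ctens D(f)=D(f)\ctens g, \]
using Lemma~\ref{le:ctens} for the last two equalities. The right-hand side is precisely $D(f)\cdot g=g\cdot D(f)$ in $\pi_*(S)$ by the first paragraph.

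Finally, for the combined statement $hgf=h\cdot g\cdot D(f)=g\cdot h\cdot D(f)$, I would just chain the two previous identities together with associativity (Proposition~\ref{pr:Z-ring}(a)): $hgf=h(gf)=h\cdot(gf)=h\cdot(g\cdot D(f))=h\cdot g\cdot D(f)$, and then swap $h$ past $g$ using the already-established $h\cdot g=g\cdot h$. There is no substantive obstacle; the whole lemma is really a bookkeeping exercise translating composition in $\cC$ into the graded product via Lemma~\ref{le:ctens} and Remark~\ref{re:move-around2}, with the only mildly subtle point being the identification $f=D(f)\ctens\id_{\power{X}{a}}$, which uses the invertibility of $\power{X}{a}$.
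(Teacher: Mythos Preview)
Your proof is correct and follows essentially the same route as the paper's own argument: both reduce to the identity $f=D(f)\ctens\id_{\power{X}{a}}$ and then push the $D(f)$ through the composite via Lemma~\ref{le:ctens}. You are somewhat more explicit than the paper in spelling out why the graded product with a degree-zero factor coincides with $\ctens$ (the paper just says this ``follows from the definitions''), and you cite Proposition~\ref{pr:Z-ring}(c) for centrality where the paper cites its $\N^n$-graded precursor Proposition~\ref{pr:R}, but these are cosmetic differences.
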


\begin{proof}
We have already seen that $h\cdot g=g\cdot h$, in
Proposition~\ref{pr:R}.  
The identificaton of these with the composite $hg$ is easy.  For
the second identity we have
\[ g\circ f=g\circ (\id_X\ctens D(f))=g\ctens D(f)=g\cdot D(f)
\]
where the second equality is from  Lemma~\ref{le:ctens} and the third
equality follows from the definitions.
Finally, the identity for $hfg$ is a consequence of the previous identities.
\end{proof}

Now we can move on to the study of $[f]_r$ and $[f]_l$ for general
maps $f$.

\begin{prop}
\label{pr:l-r-general}
Let $w_1$ and $w_2$ be two tensor words, where $w_1$ is formally
isomorphic to $\power{X}{a}$ and
$w_2$ is formally isomorphic to $\power{X}{b}$.  Let $f\colon w_1\ra w_2$, and let $c\in
\Z^n$.
Write $\id_c$ for $\id_{\power{X}{c}}$.  
\begin{enumerate}[(a)]
\item $[f]_r=[f]_l \cdot \tau_{b,a-b}$
\item $[\id_c\tens f]_r=[f]_r$
\item $[f\tens \id_c]_r=[f]_r \cdot \tau_{a-b,c}$
\item $[f\tens\id_c]_l=[f]_l$
\item $[\id_c\tens f]_l=[f]_l\cdot \tau_{a-b,c}$
\item If $g\colon w_2\ra w_3$ where $w_3\iso \power{X}{c}$, then
$[gf]_r=[g]_r\cdot [f]_r$ and likewise $[gf]_l=[g]_l\cdot
[f]_l\cdot \tau_{a-b,c-d}$.
\item Let $g\colon w_1'\ra w_2'$ where $w_1'\iso \power{X}{c}$ and
$w_2'\iso \power{X}{d}$. Then 
\[ [f\tens g]_r = [f]_r\cdot [g]_r\cdot
\tau_{a-b,d}=[g]_r\cdot[f]_r\cdot \tau_{a-b,c} \]
and
\[ [f\tens g]_l=[f]_l\cdot [g]_l\cdot \tau_{b,c-d}=[g]_l\cdot
[f]_l\cdot \tau_{a,c-d}.
\]
\end{enumerate}  
\end{prop}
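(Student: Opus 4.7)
The plan is to prove these identities in a specific order, so that later parts build on earlier ones and all the $\tau$-bookkeeping is reduced to a small number of core computations. Parts (b) and the first identity of (f) are already in hand: (b) is the $r$-statement of Proposition~\ref{pr:new}(a) applied to $w_3=\power{X}{c}$, and $[gf]_r=[g]_r\cdot[f]_r$ is Proposition~\ref{pr:new}(b). Part (d) is proved by an argument entirely dual to that of Proposition~\ref{pr:new}(a), interchanging the left and right tensor factors throughout (the $l$-version of the proof never required any swap of factors, so it goes through unchanged).

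For (a), I would adapt the diagram used in the proof of Proposition~\ref{pr:self-map}(a): arrange the defining composites of $[f]_r$ (top row) and $[f]_l$ (bottom row) in parallel, with vertical twist isomorphisms connecting them. Because $w_1$ and $w_2$ now sit in different gradings $a\neq b$, the twists at the left and right ends no longer cancel to the identity, and one obtains
\[
[f]_r \;=\; T_{-b,a}\circ [f]_l\circ T_{a,-b}
\]
in $\End(S)$.  Since $\End(S)$ is commutative (Proposition~\ref{pr:inv-end} and Lemma~\ref{le:aut-ab}), this reads $[f]_r=[f]_l\cdot D(T_{-b,a}\circ T_{a,-b})$.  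Invoking Proposition~\ref{pr:tau-ab} and the symmetries of Remark~\ref{re:tau} (notably $\tau_{a,b}=\tau_{-a,b}$ and $\tau_i^2=1$) collapses the correction to $\tau_{b,a-b}$.

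Parts (c) and (e) are then obtained by inserting a single twist: one rewrites $f\tens\id_c$ (respectively $\id_c\tens f$) as the composition of $\id_c\tens f$ (respectively $f\tens\id_c$) with an appropriate twist, so that (b) or (d) applies to the main factor and Theorem~\ref{th:multi-coherence2} reduces the discrepancy twist to a product of basic commuters; the outcome is the factor $\tau_{a-b,c}$ by Proposition~\ref{pr:tau-ab}. The second half of (f) is then a formal consequence: apply (a) to each of $gf$, $g$, and $f$ to convert the three $l$-invariants into $r$-invariants, use the already-proven first half of (f) to combine, and simplify the resulting product of $\tau$'s via Proposition~\ref{pr:tau-ab} (working in the exponents modulo $2$, since $\tau_i^2=1$).

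For (g), I decompose the bifunctorial product in the two available ways,
\[
f\tens g \;=\; (f\tens\id_{w_2'})\circ(\id_{w_1}\tens g) \;=\; (\id_{w_2}\tens g)\circ(f\tens\id_{w_1'}),
\]
and apply the first identity of (f) together with (b) and (c) to each decomposition; this gives the two $r$-forms directly.  The $l$-forms follow either by the dual decomposition or by converting via (a).  The main obstacle throughout is not conceptual but combinatorial: keeping the indices on the $\tau$'s straight, rewriting products of them using Remark~\ref{re:tau}, and verifying that various integer expressions in the exponents agree modulo $2$.  Once one accepts Proposition~\ref{pr:tau-ab} as a calculational engine, every step after (a) reduces to such a routine check.
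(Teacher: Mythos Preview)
Your overall architecture matches the paper's: (b), (d), and the first half of (f) come from Proposition~\ref{pr:new}; (a) is proved by the twist-diagram trick; and (c), (e), the second half of (f), and (g) are then formal consequences. But there is a genuine error in your execution of (a).

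When you adapt the diagram from Proposition~\ref{pr:self-map}(a), the two end-triangles are \emph{not} of the same type. The left triangle lives over $\power{X}{a-b}$ and contributes a self-map there (namely $T_{-b,a}$ or its inverse), while the right triangle lives over $S$ and contributes $T_{b,-b}$ (or $T_{-b,b}$). The correct relation is
\[
[f]_r \;=\; T_{b,-b}\circ [f]_l \circ T_{-b,a},
\]
after which Lemma~\ref{le:dumb} gives $[f]_r=[f]_l\cdot \tau_{b,-b}\cdot\tau_{-b,a}=[f]_l\cdot\tau_{b,a-b}$. Your formula $[f]_r = T_{-b,a}\circ [f]_l\circ T_{a,-b}$ is ill-typed (the leftmost $T_{-b,a}$ has domain $\power{X}{a-b}$, not $S$), and worse, your consolidation $D(T_{-b,a}\circ T_{a,-b})$ collapses to the identity since $T_{-b,a}$ and $T_{a,-b}$ are mutually inverse self-maps of $\power{X}{a-b}$---so your argument as written would yield $[f]_r=[f]_l$, which is false. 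The fix is simply to track the indices at the right-hand triangle correctly: they are $(-b,b)$, not $(-b,a)$.

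A minor stylistic point: your route to (c) and (e) via conjugating by twists and invoking Theorem~\ref{th:multi-coherence2} works, but the paper's route is shorter---once (a) is in hand, (c) follows by applying (a) to $f\otimes\id_c$, then (d), then (a) again to $f$, and collapsing the $\tau$'s. Also, (d) is not merely ``dual to'' Proposition~\ref{pr:new}(a); it is literally the second statement proved there.
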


\begin{proof}[Proof of Proposition~\ref{pr:l-r-general}]
Note first that parts (b), (d), and the first part of (f) were already proven in
Proposition~\ref{pr:new}; they are only restated here for ease of
reference.  Note also that parts (c), (e), and the second part of (f) 
are formal consequences of the aforementioned results, using (a).  So
most everything follows from (a).

To prove (a) we consider the usual diagram
\[ \xymatrix{
\power{X}{a-b}\ar[r]^-\phi \ar[dr]_\phi & \power{X}{-b}\tens \power{X}{a}
\ar[d]^{t_{-b,a}} \ar[r]^{\id\tens \ab{f}} & \power{X}{-b}\tens
\power{X}{b} \ar[d]_{t_{-b,b}} \ar[r]^-\phi & S \\
& \power{X}{a}\tens \power{X}{-b} \ar[r]^{\ab{f}\tens \id}
& \power{X}{b} \tens \power{X}{-b}. \ar[ur]_\phi
}
\]
The square commutes but the triangles do not;
the composite across the top is $[f]_r$ and the composite across the
bottom is $[f]_l$.  The diagram yields the formula
\[ [f]_r=T_{b,-b}\circ [f]_l\circ T_{-b,a}.
\]
From Lemma~\ref{le:dumb} we get that in $\pi_*(S)$ one has the formula
\[ [f]_r=[f]_l\cdot
T_{b,-b}\cdot \tau_{-b,a}=
[f]_l\cdot
\tau_{b,-b}\cdot \tau_{-b,a}=
[f]_l\cdot \tau_{a-b,b}
\] 
(using
Remark~\ref{re:tau} for the final equality).

Finally, in (g) we simply use that $f\tens g=(\id_{w_2}\tens
g)\circ(f\tens \id_{w_1'})=(f\tens \id_{w_2'})\circ (\id_{w_1}\tens
g)$.  
The desired formulas follow from the combined application of the
previous parts.
\end{proof}

\subsection{Skew-commutativity}
Skew-commutativity for $\pi_*(S)$ follows immediately from the various
formulas in Proposition~\ref{pr:l-r-general}(g).  We give a slightly
more general version here:

\begin{prop}
\label{pr:skew}
Let $W$ be an object in $\cC$, let $f\colon \power{X}{a}\ra S$ and
$g\colon \power{X}{b}\ra W$.  Then under the left and right actions of
$\pi_*(S)$ on $\pi_*(W)$ we have 
\[ f\cdot g=g\cdot f \cdot
\tau_{a,b}=g\cdot f \cdot \tau_1^{(a_1b_1)}\cdots\tau_n^{(a_nb_n)}.
\]  
\end{prop}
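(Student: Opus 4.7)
The plan is to realize $f \cdot g$ and $g \cdot f$ as the two legs of a diagram whose only non-trivial ingredient relating them is the twist $t_{a,b}$, and then to convert the resulting pre-composition by $T_{a,b} = \ab{t_{a,b}}$ into multiplication by $\tau_{a,b}$ in $\pi_*(W)$.

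Concretely, after unpacking the definitions of both products, I would draw the diagram
\[
\xymatrix{
\power{X}{a+b} \ar[r]^-{\phi} \ar[d]_{T_{a,b}}
& \power{X}{a}\tens\power{X}{b} \ar[r]^-{f\tens g} \ar[d]^{t_{a,b}}
& S\tens W \ar[r]^-{\cong} \ar[d]^{t_{S,W}}
& W \ar@{=}[d] \\
\power{X}{a+b} \ar[r]^-{\phi}
& \power{X}{b}\tens\power{X}{a} \ar[r]^-{g\tens f}
& W\tens S \ar[r]^-{\cong}
& W.
}
\]
The left square commutes by the very definition $T_{a,b} = \phi\circ t_{a,b}\circ\phi$; the middle square is naturality of the twist; and the right square is the standard compatibility in a symmetric monoidal category identifying the unital isomorphism $S\tens W\cong W$ with the composite of $t_{S,W}$ and $W\tens S\cong W$. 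Tracing the outer rectangle yields the equality $f\cdot g = (g\cdot f)\circ T_{a,b}$.

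The final step is to rewrite $(g\cdot f)\circ T_{a,b}$ as $(g\cdot f)\cdot \tau_{a,b}$. Since $T_{a,b}$ is a self-map of the invertible object $\power{X}{a+b}$ and $\tau_{a,b} = D(T_{a,b})$ by definition, the argument proving Lemma~\ref{le:dumb}---which uses only Lemma~\ref{le:ctens} and the definition of the graded pairing, and so extends verbatim from target $S$ to an arbitrary target $W$---gives $(g\cdot f)\circ T_{a,b} = (g\cdot f)\cdot \tau_{a,b}$ inside $\pi_*(W)$. Combining this with Proposition~\ref{pr:tau-ab}, which identifies $\tau_{a,b} = \tau_1^{(a_1b_1)}\cdots \tau_n^{(a_nb_n)}$, yields both equalities in the proposition.

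I do not anticipate any serious obstacle. The only mild points are the commutativity of the right square (a symmetric monoidal coherence statement deducible from MacLane's theorem) and the extension of Lemma~\ref{le:dumb} from $S$-valued to $W$-valued maps, both of which are purely formal.
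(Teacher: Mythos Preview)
Your proof is correct and matches the paper's argument almost exactly: the paper draws the same diagram (phrased with a non-commuting left triangle rather than your commuting left square, which amounts to the same thing), obtains $f\cdot g=(g\cdot f)\circ T_{a,b}$, and then invokes Lemma~\ref{le:dumb} and Proposition~\ref{pr:tau-ab}. You are in fact slightly more careful than the paper in flagging that Lemma~\ref{le:dumb} as stated concerns $S$-valued maps and needs the trivial extension to $W$-valued targets.
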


\begin{proof}
Consider the diagram
\[ \xymatrix{
\power{X}{a+b} \ar[r]^-\phi\ar[dr]_{\phi} & \power{X}{a}\tens
\power{X}{b} \ar[d]^{t_{a,b}} \ar[r]^{f\tens g} & S\tens W \ar[r]\ar[d]_{t_{S,W}} & W \\
& \power{X}{b}\tens\power{X}{a} \ar[r]^{g\tens f}& W\tens S\ar[ur]
}
\]
and note that all regions commute except the leftmost triangle.  The
composite across the top is $f\cdot g$, and the composite across the
bottom is $g\cdot f$.  The diagram yields the identity
$f\cdot g=(g\cdot f)\circ A$
where $A$ is the appropriate self-map of $\power{X}{a+b}$ coming from
the left triangle.  By Lemma~\ref{le:dumb} we obtain $f\cdot g=g\cdot
f\cdot D(A)$ in $\pi_*(W)$, and we know that $D(A)=[t_{a,b}]_r=\tau_{a,b}$.  The
identification of $\tau_{a,b}$ as $\prod_i \tau_i^{(a_ib_i)}$ is from
Proposition~\ref{pr:tau-ab}.
\end{proof}

\begin{remark}
There are other settings in which one can prove similar
skew-commutativity results.  For example, if $W$ is a commutative
monoid in $\cC$ (with respect to $\tens$) then $\pi_*(W)$ has the same
skew-commutativity law as $\pi_*(S)$, where now the $\tau_i$'s are
regarded as elements of $\pi_*(W)$ via the unit map $S\ra W$.
If $Z$ is a bimodule over $W$ then  there is a corresponding 
skew-commutativity result in that setting.  All of the proofs are the
same as for Proposition~\ref{pr:skew} above, so we leave these to the reader.  
\end{remark}

\section{More general grading schema}
\label{se:schema}

Let $(\cC,\tens,S)$ be an additive category with a symmetric monoidal
structure that is additive in each variable.   
Let $A$ be a finitely-generated abelian group, and fix a homomorphism
$h\colon A\ra \Pic(\cC)$.  For each $a\in A$ let $X_a$ be a
chosen object in the isomorphism class $h(a)$; assume $X_0=S$.  For $W$ in $\cC$
define $\pi_*^A(W)$ to be the $A$-graded abelian group $a\mapsto
\cC(X_a,W)$.  To obtain a product on $\pi^A_*(S)$ one can
start by choosing isomorphisms
\[ \sigma_{a,b}\colon X_{a+b} \ra X_a\tens X_b\]
for each $a,b\in A$.  If $f\colon X_a \ra S$ and $g\colon X_b\ra S$
then we define the product $f\cdot g$ to be the composite
\[ X_{a+b} \llra{\sigma_{a,b}} X_a\tens X_b \llra{f\tens g} S\tens S
\iso S.
\]
This clearly defines a distributive product on $\pi^A_*(S)$.  The
questions that arise are:
\begin{enumerate}[(1)]
\item Is it possible to choose the $\sigma_{a,b}$ isomorphisms so that
the product on $\pi^A_*(S)$ is associative and unital?
\item If there are multiple ways to accomplish (1), do they give rise
to isomorphic rings?  That is, is the ring structure on
$\pi^A_*(S)$ in some sense canonical?
\end{enumerate}
Note that in Section~\ref{se:app} we proved that the answer to (1) is
yes in the case when $A$ is free.  The construction depended on
choosing a free basis $e_1,\ldots,e_n$ for $A$ and then fixing a
specific choice of isomorphism $\alpha_i\colon S\ra X_{-e_i}\tens
X_{e_i}$ for each $i$; so the construction was certainly not canonical.  

We will see below that the answer to (1) is yes in general, but the
answer to (2) is no.  In fact, the set of isomorphism classes of
ring structures obtained in this way is parameterized by the
cohomology group $H^2(A;\Aut(S))$.  Much of the material behind this
story seems to be standard, but we were unable to find an adequate reference
(the introduction to the paper \cite{CK} gives a partial survey, though).

I am grateful to Victor Ostrik and Vadim Vologodsky for conversations
about the results in this section.

\medskip

Let us call the collection $(\sigma_{a,b})_{a,b\in A}$ an
\mdfn{$A$-trivialization} of $\cC$ with respect to $X$ if it satisfies
two properties:
\begin{enumerate}[(1)]
\item For every $a\in A$ the isomorphisms $\sigma_{a,0}$ and
$\sigma_{0,a}$ coincide with the unital isomorphisms in $\cC$.  
\item For every $a,b,c\in A$ the following pentagon commutes:
\[ \xymatrixrowsep{1pc}\xymatrix{
X_{a+b+c}\ar[r]^{\sigma_{a,b+c}}\ar[dd]_{\sigma_{a+b,c}} & X_a\tens X_{b+c}
\ar[dr]^{\id\tens \sigma_{b,c}} \\
&& X_a\tens (X_b\tens X_c)\\
X_{a+b}\tens X_c \ar[r]^-{\sigma_{a,b}\tens \id} & (X_a\tens X_b)\tens X_c \ar[ur]^a
}
\]
\end{enumerate}
Under conditions (1) and (2) the induced product on $\pi_*^A(S)$ is
both associative and unital; we will call this the 
\dfn{standard ring structure} on $\pi^A_*(S)$ associated to
$\sigma$.  Note that there possibly exist ring structures on
$\pi_*^A(S)$ which are not standard---i.e., which do not arise from an
$A$-trivialization.  Such structures are not part of the theory we
develop here.

Given two $A$-trivializations $\sigma$ and $\sigma'$ we get two ring
structures $\pi_*^A(S)_\sigma$ and $\pi_*^A(S)_{\sigma'}$.  
Are the two standard rings obtained in this way isomorphic?  The question is
not easy to answer when stated so broadly, but we can refine it
somewhat.  The evident way to construct a map 
$\pi_*^A(S)_\sigma \ra \pi_*^A(S)_{\sigma'}$ would be to send
each $f\colon X_a\ra S$ to $f\cdot u(a)$ for some chosen $u(a)\in\Aut(S)$
that is independent of $f$.
[Note that it does not matter which product we use for
$f\cdot u(a)$, since both the $\sigma$-product and the
$\sigma'$-product will give the same answer if one of the factors lies
in $\pi_0^A(S)$, by condition (1).]  Let us say that a \dfn{standard isomorphism}
between standard ring structures is one that is of this form; note
that it is determined by  a chosen map of sets $u\colon A\ra \Aut(S)$.  

Here is the main goal of this section:

\begin{prop}
\label{pr:A}
Suppose $(\cC,\tens,S)$, $h\colon A\ra \Pic(\cC)$, and $X\colon A \ra
\ob(\cC)$ are as in the beginning of this section.  
\begin{enumerate}[(a)]
\item There exists
an $A$-trivialization of $\cC$ with respect to $X$, and therefore a
resulting standard ring structure on $\pi_*^A(S)$.  
\item The set of all
such $A$-trivializations is in bijective correspondence with
$Z^2(A;\Aut(S))_{norm}$.  
\item
The set of
different possible standard ring structures on $\pi_*^A(S)$, up to
standard isomorphism, is in bijective correspondence with
$H^2(A;\Aut(S))$.
\end{enumerate}
\end{prop}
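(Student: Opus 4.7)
The natural order is to prove (b) first (using a reference trivialization supplied by (a)), then (c), and finally to construct an explicit trivialization for (a).

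For (b), fix a reference $A$-trivialization $\sigma^0$. For any other normalized system $\sigma$, the automorphism $\sigma_{a,b}\circ(\sigma^0_{a,b})^{-1}$ of the invertible object $X_a\tens X_b$ is, by Proposition~\ref{pr:inv-end}, uniquely of the form $u(a,b)\ctens \id$ for a scalar $u(a,b)\in\Aut(S)$. Using Remark~\ref{re:move-around} to shuffle these scalars around the pentagon for $\sigma$, the pentagon condition becomes the multiplicative $2$-cocycle identity $u(a,b)\cdot u(a+b,c)=u(a,b+c)\cdot u(b,c)$, and the normalization conditions on $\sigma$ match normalization of $u$. Conversely, any normalized cocycle yields a valid $\sigma$, giving the bijection with $Z^2(A;\Aut(S))_{norm}$.

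For (c), let $\sigma,\sigma'$ correspond to cocycles $c,c'$ via (b). A standard isomorphism $\Phi\colon\pi^A_*(S)_\sigma\ra\pi^A_*(S)_{\sigma'}$ is determined by a normalized function $u\colon A\ra\Aut(S)$ via $\Phi(f)=f\cdot u(a)$ for $f\in\pi_a^A(S)$. Unwinding products using centrality of $\pi_0$ and the identity $\sigma_{a,b}=c(a,b)\ctens\sigma^0_{a,b}$, multiplicativity of $\Phi$ reduces to the identity $c(a,b)\cdot u(a+b)=c'(a,b)\cdot u(a)\cdot u(b)$, i.e., $c/c'=\delta u$ in multiplicative cochain notation. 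So standard isomorphism classes of standard ring structures correspond bijectively to $H^2(A;\Aut(S))$.

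For (a), choose generators $a_1,\ldots,a_n$ of $A$ and let $\pi\colon F=\Z^n\ra A$ send $e_i$ to $a_i$, with kernel $K$ (free of finite rank as a subgroup of $\Z^n$). Pick inverses $(X_{a_i}^*,\alpha_i)$ and apply Section~\ref{se:app} to obtain an $F$-trivialization: canonical isomorphisms $\phi^F_{m,m'}\colon Y^{m+m'}\ra Y^m\tens Y^{m'}$ satisfying the pentagon, where $Y^m=X_{a_1}^{m_1}\tens\cdots\tens X_{a_n}^{m_n}$. Since $h\circ\pi$ vanishes on $K$, each $Y^k$ for $k\in K$ is isomorphic to $S$; choose a basis $k_1,\ldots,k_r$ of $K$ and arbitrary isomorphisms $\beta_i\colon Y^{k_i}\ra S$, then extend tensorially along $\phi^F$ to a compatible family $\beta_k\colon Y^k\ra S$ with $\beta_{k+k'}=\beta_k\cdot\beta_{k'}$ under $\phi^F_{k,k'}$. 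Finally, fix a normalized set-theoretic section $s\colon A\ra F$ and isomorphisms $\psi_a\colon X_a\ra Y^{s(a)}$ with $\psi_0=\id_S$, and define $\sigma_{a,b}$ to be the composite
\[
X_{a+b}\llra{\psi_{a+b}} Y^{s(a+b)} \llra{\phi^F} Y^{k(a,b)}\tens Y^{s(a)+s(b)} \llra{\beta_{k(a,b)}\tens \id} Y^{s(a)+s(b)} \llra{\phi^F} Y^{s(a)}\tens Y^{s(b)} \llra{\psi_a^{-1}\tens \psi_b^{-1}} X_a\tens X_b,
\]
where $k(a,b)=s(a+b)-s(a)-s(b)\in K$. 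Normalization of $\sigma$ follows from $s(0)=0$; for the pentagon, the $\psi$'s telescope and the pentagon for $\phi^F$ (provided by Section~\ref{se:app}) collapses the $\phi^F$'s, reducing the pentagon for $\sigma$ to the identity $\beta_{k(a,b)}\cdot\beta_{k(a+b,c)}=\beta_{k(b,c)}\cdot\beta_{k(a,b+c)}$. Both sides equal $\beta_{s(a+b+c)-s(a)-s(b)-s(c)}$ by the multiplicative property of $\beta$, so this holds.

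The main obstacle is the pentagon verification in (a). A generic choice of isomorphism $Y^k\iso S$ for each $k\in K$ would leave behind an obstruction class in $H^3(A;\Aut(S))$ that can be nonzero; the key trick is to exploit freeness of $K\subset\Z^n$ to extend the $\beta_{k_i}$ multiplicatively over $K$, after which the pentagon reduces to the cocycle identity $k(a,b)+k(a+b,c)=k(b,c)+k(a,b+c)$ that automatically holds because $k$ is the $2$-cocycle classifying the extension $K\to F\to A$.
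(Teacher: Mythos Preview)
Your arguments for (b) and (c) are essentially the same as the paper's, and they are fine. The interesting divergence is in (a), where the paper takes a cohomological route while you attempt a direct construction via a free cover $F\twoheadrightarrow A$.

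The gap is in the sentence ``extend tensorially along $\phi^F$ to a compatible family $\beta_k\colon Y^k\to S$ with $\beta_{k+k'}=\beta_k\cdot\beta_{k'}$.'' This multiplicative extension over $K$ is exactly where the difficulty lives, and it is not automatic. Fixing arbitrary isomorphisms $\gamma_k\colon Y^k\to S$, the failure of multiplicativity is a $2$-cocycle $c\in Z^2(K;\Aut(S))$, and you need $[c]=0$ in $H^2(K;\Aut(S))$; since $K$ is free of rank $r$, this group is $\Aut(S)^{\binom{r}{2}}$ and is generally nonzero. The class is governed by the skew-commutativity factors: as an alternating form it sends $(k_i,k_j)$ to $\tau_{k_i,k_j}=\prod_l \tau_l^{(k_i)_l(k_j)_l}$. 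For a bad choice of generators this is nontrivial. For example, take $A=\Z/2$ with $h(1)=[X]$ for some $X$ with $\tau_X=-1$, and use three redundant generators $a_1=a_2=a_3=1$; then $K=\{(m_1,m_2,m_3):\sum m_i\text{ even}\}$ with basis $k_1=(1,1,0)$, $k_2=(0,1,1)$, and $\tau_{k_1,k_2}=\tau_2=-1$. So your $\beta$ cannot be made multiplicative, and the pentagon reduction in your last paragraph fails.

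Your approach can be repaired: choose the generators $a_i$ to correspond to a cyclic decomposition $A\cong\Z^s\times\Z/d_1\times\cdots\times\Z/d_t$. Then $K=0^s\times d_1\Z\times\cdots\times d_t\Z$ has the basis $k_j=d_j e_{s+j}$, and since distinct $k_i,k_j$ have disjoint support in $F$ all the relevant twists are canonical in the sense of Theorem~\ref{th:multi-coherence1}; the multiplicative extension now goes through and the rest of your argument works. This is genuinely more elementary than the paper's proof, which reduces to an extended symmetric monoidal category of type $(\Pic(\cC),\Aut(S))$ and then invokes the Eilenberg--MacLane calculation (Proposition~\ref{pr:EM}, Lemma~\ref{le:trivial}) that the forgetful map $H^3_{EM}(A;N)\to H^3(A;N)$ is zero. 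The paper's approach is cleaner and coordinate-free, but buys this at the cost of the nontrivial input from \cite{EM2}; your approach, once patched, trades that for an explicit choice of presentation.
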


Note that we essentially already encountered this in the case where
$A$ was $\Z^n$.  In that case $H^2(A;\Aut(S))\iso \Aut(S)^n$
(non-canonically), and one only obtains a ring structure after fixing
a basis for $A$ together with $n$ elements of $\Aut(S)$---as we found
in our earlier treatment.  The overall lesson is that grading morphism
sets by invertible objects is a bit dicey when it comes to product
structures; the rings obtained are typically neither unique nor
canonical.

\begin{remark}
One can also ask about the graded-commutativity properties
of $\pi_*^A(S)$.  It is easy to prove that if $f\in \pi_a^A(S)$ and
$g\in \pi_b^A(S)$ then $fg=gf\cdot
\theta_{a,b}$ where $\theta_{a,b}=D(\sigma_{b,a}^{-1}\circ
t_{a,b}\circ \sigma_{a,b})\in \Aut(S)$.
 We have not explored the properties of $\theta\colon A^2\ra
\Aut(S)$, mostly due to a lack of application.  Our analysis in the free case
(Proposition~\ref{pr:tau-ab}) suggests this might be a nice exercise. 
\end{remark}

We will prove Proposition~\ref{pr:A} by analyzing a very specific
class of monoidal categories, and then reducing to that case.

\subsection{Monoidal categories of type $\mdfn{(A,N)}$}
\label{se:ANm}
Fix abelian groups $A$ and $N$.  Let $\cC=\cC[A,N]$ be the category with object
set $A$, where there are no maps between distinct objects, and where
the set of self-maps of each object is equal to $N$.
Define a bifunctor $\tens \colon \cC\times \cC\ra \cC$ whose behavior
on objects is given by the sum in $A$, and whose behavior on morphisms
is given by the sum in $N$.  To equip $(\cC,\tens,0_A)$
with a monoidal structure we must specify unital
isomorphisms $a\oplus 0\iso a$ and $0\oplus a\iso a$; but $a\oplus
0=a=0\oplus a$, so we can (and will) just take the isomorphisms to be
the identities.  

We must also specify, for every $a,b,c\in A$, an
associativity isomorphism $\alpha_{a,b,c}\colon (a\tens b)\tens c \ra
a\tens (b\tens c)$.  Again, since the objects $(a\tens b)\tens c$ and $a\tens
(b\tens c)$ are actually equal (they both are equal to the object $a+b+c$)
we are just specifying an element $\alpha_{a,b,c}\in N$.  We {\it
could\/} require this to be the identity, but we wish to not be so
restrictive here.  Let us call a monoidal structure on
$(\cC,\tens,0_A)$ obtained in this way an \dfn{extended} monoidal
structure, as it is an extension of the canonical tensor functor and
unital isomorphisms.  

The pentagonal condition that a monoidal structure must satisfy just
says that $\alpha\colon A^3\ra N$ is a $3$-cocycle in the usual bar
complex $C^*(A;N)$ for computing group cohomology.  Compatibility
between associativity and unital isomorphisms then requires that
$\alpha_{a,b,c}=0$ if any of $a$, $b$, or $c$ are zero; in other
words, we have a normalized cocycle.  In this way we see that extended
monoidal structures on $(\cC,\tens,0)$ are in bijective correspondence
with the group $Z^3(A;N)_{norm}$.  Even more, it is easy to see that
elements of the group $H^3(A;N)$ are in bijective corresondence with
extended monoidal structures on $(\cC,\tens,0)$ up to isomorphism.  
This is a standard story.  For $\alpha\in Z^3(A;N)_{norm}$ write
$\cC_\alpha=\cC[A,N]_\alpha$ for the corresponding monoidal category.

Fix an element $\alpha\in Z^3(A;N)_{norm}$.  In $\cC_\alpha$ let us
ask if there is an $A$-trivialization with respect to the identity
map: that is, do there exist isomorphisms $\sigma_{a,b}\colon a\tens
b\ra a+b$ satisying the required associativity and unital conditions?
Again, $\sigma_{a,b}$ is just an element of $N$ and so
$\sigma\in C^2(A;N)$.  The unital condition is the requirement
$\sigma\in C^2(A;N)_{norm}$ and the associativity condition translates
to $\delta\sigma=\alpha$.  So the cohomology class of $\alpha$ in
$H^3(A;N)$ (or $H^3(A;N)_{norm}$, which is the same thing) is the
obstruction to the existence of the desired $\sigma$'s.

\begin{remark}
Note that the $\sigma_{a,b}$'s are giving a (strong) monoidal structure on the
identity functor $\cC[A,N]_0\ra \cC[A,N]_\alpha$, showing that the
domain and target are monoidally equivalent.  This is why we call 
the collection $(\sigma_{a,b})_{a,b\in A}$ a {\it trivialization}
of the monoidal structure $\cC[A,N]_\alpha$.
\end{remark}

\subsection{Symmetric monoidal categories of type $\mathbf{(A,N)}$}
There is a similar story for the existence of extended {\it symmetric\/} monoidal
categories on $\cC$.  Here one must specify both the $\alpha_{a,b,c}$
elements and certain elements $\beta_{a,b}\in N$ giving the
commutativity isomorphisms.  One again finds that the set of extended
structures is in bijective correspondence with the $3$-cocycles in a
certain complex.  To describe this, let $\cE$ be the complex
\[ \Z\langle A^4\rangle \oplus \Z\langle A^3\rangle_1 \oplus
\Z\langle A^3\rangle_2 \oplus \Z\langle
A^2\rangle \llra{d_4} \Z\langle A^3\rangle \oplus \Z\langle A^2\rangle
\llra{d_3} \Z\langle A^2\rangle \llra{d_2} \Z\langle A\rangle\llra{d_1} 0
\]
(concentrated in homological degrees $0$ through $4$) with
differentials defined on free generators by the formulas below:
\begin{align*}
&d_1([a])=0, \quad d_2([a|b])=[a]-[a+b]+[b],\\
&d_3([a|b|c])=[b|c]-[a+b|c]+[a|b+c]-[a|b],\quad
d_3([a|b])=[a|b]-[b|a]\\
&d_4([a|b|c|d])=[b|c|d]-[a+b|c|d]+[a|b+c|d]-[a|b|c+d]+[a|b|c] \\
& d_4([a|b|c]_1)=[a|b|c]-[a|c|b]+[c|a|b]-[b|c]+[a+b|c]-[a|c],\\ 
& d_4([a|b|c]_2)=[a|b|c]-[b|a|c]+[b|c|a]+[a|b]-[a|b+c]+[a|c],\\ 
& d_4([a|b])=[a|b]+[b|a].
\end{align*}
Let $D\subseteq \cE$ be the ``degenerate'' subcomplex spanned by all
symbols $[a_1|\ldots|a_n]$ in which at least one of the $a_i$'s is
zero, and note that this is indeed closed under the differential.  
A little legwork shows that extended symmetric monoidal structures on
$\cC$ correspond to normalized $3$-cocycles $(\alpha,\beta)\in
Z^3(\Hom(\cE,N))$ (where `normalized' refers to cocycles that vanish
on the degenerate subcomplex).  

\begin{remark}
The paper \cite{H} used a similar complex but where the $\Z\langle
A^3\rangle_2$ term was omitted from $\cE_4$ (and where the grading of
the complex was shifted by $1$).  It is easy to see that omitting this
term does not effect $H_3(\cE)$ or $Z^3(\cE;N)$; in effect, the relations coming
from this term are consequences of the ones coming from
$d_4([a|b|c]_1)$ and $d_4([a|b])$, by an easy exercise.  We are using
the larger complex because it allows us to directly quote 
published results from \cite{EM2}.  
\end{remark}

The complex $\cE$ was introduced by Eilenberg and MacLane \cite{EM1, EM2}: it is the
first few terms of their iterated bar construction.  They prove that
their complex calculates the  homology of Eilenberg-MacLane
spaces in the stable range; in particular, 
\[ H_i(\cE)\iso H_{n+i-1}(K(A,n)) \]
for $1\leq i\leq 3$ and $n\geq 3$ \cite[Theorem 6]{EM1}.  Let us write
$H_*^{EM}(A)$ for $H_*(\cE)$ and $H^*_{EM}(A;N)$ for
$H^*(\Hom(\cE,N))$.  Eilenberg and MacLane calculated the following:

\begin{prop}[Eilenberg-MacLane]\mbox{}\par
\label{pr:EM}
\begin{enumerate}[(a)]
\item
There are natural isomorphisms 
\[ H_1^{EM}(A)\iso A, \quad H_2^{EM}(A)\iso
0, \quad\text{and}\quad H_3^{EM}(A)\iso A/2A.
\]  
The last isomorphism is induced by
$a\mapsto [a|a]$.
\item
There is an isomorphism $H^3_{EM}(A;N)\ra
\Hom(A/2A,N)=\Hom(A,\psub{2}{N})$ given by $(\alpha,\beta)\mapsto
[x\mapsto \beta(x,x)]$. 
\end{enumerate}   
\end{prop}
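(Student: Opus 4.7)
The plan is to establish (a) first and then derive (b) from it via the universal coefficient theorem. The first isomorphism $H_1^{EM}(A) \cong A$ is immediate: since $d_1 = 0$, we have $H_1^{EM}(A) = \Z\langle A\rangle/\operatorname{im}(d_2)$, and the generators $[a]+[b]-[a+b]$ of $\operatorname{im}(d_2)$ identify the quotient with $A$ via $[a]\mapsto a$. For the two harder claims, the cleanest route is to invoke the Eilenberg--MacLane identification $H_i^{EM}(A) \cong H_{n+i-1}(K(A,n);\Z)$ in the stable range $n\ge 3$ (\cite[Theorem 6]{EM1}), and quote the standard computations $H_n(K(A,n);\Z)=A$ (Hurewicz), $H_{n+1}(K(A,n);\Z)=0$, and $H_{n+2}(K(A,n);\Z)=A/2A$ (the last coming from $\Sq^2$). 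If a self-contained argument in $\cE$ is preferred, one proceeds as follows: for $H_2=0$ one reduces any 2-cycle modulo the bar boundaries $d_3([a|b|c])$ to a symmetric combination and then kills it using $d_3([a|b])=[a|b]-[b|a]$; for $H_3$ one uses the various $d_4$-boundaries to reduce an arbitrary 3-cycle to a combination of diagonal elements $[a|a]$, and then checks that $2[a|a]$ is a boundary while $[a|a]$ itself is not, so that $a\mapsto [a|a]$ induces the desired isomorphism $A/2A \cong H_3^{EM}(A)$.

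For part (b), since $\cE$ is a complex of free abelian groups, the universal coefficient sequence gives
\[
0 \to \Ext(H_2^{EM}(A), N) \to H^3_{EM}(A; N) \to \Hom(H_3^{EM}(A), N) \to 0.
\]
The $\Ext$ term vanishes by part (a), so $H^3_{EM}(A; N) \cong \Hom(H_3^{EM}(A), N) \cong \Hom(A/2A, N) = \Hom(A, \psub{2}{N})$. To extract the explicit formula, I note that under the identification $H_3^{EM}(A)\cong A/2A$ via $a\mapsto [a|a]$, the evaluation pairing sends a cocycle $(\alpha,\beta)$ paired with the class $[x|x]$ to $\beta(x,x)$, so the composite isomorphism is exactly $(\alpha,\beta)\mapsto [x\mapsto \beta(x,x)]$, as claimed.

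The main obstacle will be the direct verification that $H_2^{EM}(A)=0$ and $H_3^{EM}(A)\cong A/2A$: the interplay between the ordinary bar relations and the symmetrization relations contributed by the extra summands $\Z\langle A^3\rangle_2$ and $\Z\langle A^2\rangle$ in $\cE_4$ makes the combinatorics surprisingly delicate. Organizing the reductions so that no unwanted cross-terms survive is exactly the problem the iterated bar construction was invented to manage, which is why quoting the Eilenberg--MacLane computation is by far the most efficient route.
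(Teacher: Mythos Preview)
Your proposal is correct and follows essentially the same approach as the paper: the paper simply cites \cite[Theorems 20.3, 20.5, 23.1]{EM2} for part (a) and then deduces (b) from (a) via the Universal Coefficient Theorem, exactly as you do. Your additional remarks---the elementary computation of $H_1$, the route through $H_{n+i-1}(K(A,n);\Z)$, the sketch of a direct combinatorial argument, and the explicit tracking of the evaluation pairing in (b)---go somewhat beyond what the paper records, but the core strategy is identical.
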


The isomorphisms in part (a) are from \cite[Theorems 20.3, 20.5,
23.1]{EM2}. 
Note that (b) is an immediate consequence of (a), using the Universal
Coefficient Theorem.  Also, note that part of the claim in (b) is that
if $(\alpha,\beta)$ is a $3$-cocyle in $\Hom(\cE,N)$ then $x\mapsto
\beta(x,x)$ is linear and takes its values in $\psub{2}{N}$.  Neither
of these claims is immediately obvious, although they follow from (a).
Separate from this, however, observe that they also follow from 
Proposition~\ref{pr:tau} because $x\mapsto \beta(x,x)$ is the
$\tau$-function for the symmetric monoidal category
$\cC[A,N]_{(\alpha,\beta)}$.

\medskip
Observe  that the bar complex $C_*(A)$ is contained inside $\cE$ as a
subcomplex.  Let $Q$ be the quotient, so that we have the short exact
sequence $0\ra C_*(A) \ra \cE \ra Q\ra 0$.  Note that $Q$ has the form
$\Z\langle A^3\rangle_1\oplus \Z\langle A^3\rangle_2 
\oplus \Z\langle A^2\rangle \ra \Z\langle
A^2\rangle$, concentrated in degrees $3$ and $4$.  Applying
$\Hom(\blank,N)$, the long exact sequence in cohomology then gives
\begin{myequation}
\label{eq:les}
\qquad\qquad \cdots \la H^3(A;N) \la H^3_{EM}(A;N) \la H^3(Q;N) \la H^2(A;N)\la \cdots
\end{myequation}
The group $H^3(Q;N)$ is easy to analyze: it is the collection of
$\beta\colon A^2\ra N$ satisfying $\beta(x,y)=-\beta(y,x)$ and
$\beta(y,z)-\beta(x+y,z)+\beta(x,z)=0$ for all $x,y,z\in A$.  In other
words, $H^3(Q;N)$ is the collection of alternating bilinear forms
$A\times A\ra N$; write this as $H^3(Q;N)\iso \AltBilin(A,N)$.  The
map $\AltBilin(A,N)\ra H^3_{EM}(A;N)$ sends an alternating form
$\beta$ to the cohomology class $[(0,\beta)]$.  

The following lemma is the key calculation of this entire section:

\begin{lemma}
\label{le:trivial}
For any abelian groups $A$ and $N$, 
the map $H^3_{EM}(A;N)\ra H^3(A;N)$ is the zero map.
\end{lemma}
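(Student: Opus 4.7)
The strategy is to convert the vanishing of the map $H^3_{EM}(A;N)\to H^3(A;N)$ into a concrete algebraic lifting problem via the long exact sequence \eqref{eq:les}, and then to solve that problem by exploiting the $\F_2$-vector space structure on $A/2A$.

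First I would observe that by exactness of \eqref{eq:les}, the desired vanishing is equivalent to surjectivity of $H^3(Q;N)\to H^3_{EM}(A;N)$. A direct inspection of the differentials in $\cE$ shows that $Z^3(\Hom(Q,N))$ consists precisely of bilinear forms $\gamma\colon A\times A\to N$ satisfying $\gamma(a,b)+\gamma(b,a)=0$; since $Q$ vanishes below degree $3$ there are no coboundaries to quotient by, so $H^3(Q;N)=\AltBilin(A,N)$. Such a $\gamma$ is carried to the class $[(0,\gamma)]\in H^3_{EM}(A;N)$. Composing with the isomorphism of Proposition~\ref{pr:EM}(b), the map I must show is surjective becomes
\[
\AltBilin(A,N)\lra \Hom(A/2A,N),\qquad \gamma\longmapsto \bigl(x\mapsto \gamma(x,x)\bigr).
\]

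Next I would solve the resulting lifting problem. Fix $\phi\colon A\to \psub{2}{N}$, equivalently a homomorphism $A/2A\to N$. Since the image of $\phi$ lies in the $2$-torsion $\psub{2}{N}$, and since on a $2$-torsion target the symmetric and skew-symmetric conditions on a bilinear form coincide, it suffices to produce a symmetric bilinear $\bar\gamma\colon (A/2A)\times (A/2A)\to \psub{2}{N}$ with $\bar\gamma(x,x)=\phi(x)$; pulling back along the quotient $A\twoheadrightarrow A/2A$ then yields the required skew-symmetric $\gamma$ on $A$. Now $A/2A$ is an $\F_2$-vector space, so I choose an $\F_2$-basis $\{e_i\}_{i\in I}$ and set $\bar\gamma(e_i,e_j)=\delta_{ij}\phi(e_i)$, extending bilinearly. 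Using $a_i^2=a_i$ in $\F_2$, for $x=\sum a_i e_i$ one has
\[
\bar\gamma(x,x)=\sum_i a_i^2\,\phi(e_i)=\sum_i a_i\,\phi(e_i)=\phi(x),
\]
so $\bar\gamma$ has the desired diagonal.

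The main obstacle is getting Step~1 straight: correctly identifying the composite $\AltBilin(A,N)\to H^3_{EM}(A;N)\to \Hom(A/2A,N)$ with the diagonal-restriction map, which requires tracing through the proof of Proposition~\ref{pr:EM}(b) and the connecting maps of the long exact sequence. Once that identification is in hand, the construction in Step~2 is essentially formal and works uniformly for every abelian group $A$, without invoking the structure theorem for finitely generated abelian groups or needing any choice of presentation.
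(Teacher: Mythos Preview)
Your proposal is correct and follows essentially the same route as the paper: both use exactness of (\ref{eq:les}) to reduce to surjectivity of $\AltBilin(A,N)\to H^3_{EM}(A;N)\iso\Hom(A/2A,N)$, identify this composite as diagonal restriction $\gamma\mapsto(x\mapsto\gamma(x,x))$, and then build a preimage for any $\phi$ using an $\F_2$-basis of $A/2A$. Your construction of the lift---defining $\bar\gamma$ on $A/2A$ and pulling back along $A\twoheadrightarrow A/2A$---is actually slightly cleaner than the paper's version, which chooses lifts of the basis elements in $A$ and ``extends bilinearly'' (a phrase that only makes sense once one routes through $A/2A$ anyway).
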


\begin{proof}
We consider the commutative diagram
\[ \xymatrix{
\cdots \ar[r] & \AltBilin(A,N) \ar[r]^u\ar[dr]_{p} & H^3_{EM}(A,N) \ar[d]^\iso\ar[r]^v & H^3(A,N)
\ar[r] & \cdots\\
&& \Hom(A,\,\psub{2}{N})\ar@{=}[r] &\Hom(A/2A,N)
}
\]
where the top row is the long exact sequence (\ref{eq:les}), the vertical map is the
one from Proposition~\ref{pr:EM}(b), and the map $p$ is the
evident composite.  Note that $p$ sends an alternating bilinear form
$\theta\colon A\times A\ra N$ to the map $a\mapsto \theta(a,a)$.  But
it is easy to see that $p$ is surjective.  Indeed, pick elements
$\{e_i\}$ in $A$ whose mod $2$ reductions give a $\Z/2$-basis for
$A/2A$.  If $f\colon A/2A\ra N$ then define a bilinear form $b\colon
A\times A\ra N$ by $b(e_i,e_j)=0$ if $i\neq j$ and
$b(e_i,e_i)=f(e_i)$.  This is alternating because $2f(e_i)=0$, and $p(b)=f$.

Since $p$ is surjective it follows that $u$ is surjective, and so
$v=0$.  
\end{proof}

\subsection{Trivializations of $\mathbf{(A,N)}$-structures}

\begin{prop}
\label{pr:AN1}
Fix an extended symmetric monoidal structure $(\alpha,\beta)$ on
$\cC[A,N]$.    Then there exists a trivialization of the monoidal
structure $\cC[A,N]_\alpha$, and the set of all such trivializations
is in bijective correspondence with $Z^2(A;N)_{norm}$.  
\end{prop}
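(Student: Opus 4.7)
My plan is to translate a trivialization directly into a condition on a cochain in the bar complex $C^*(A;N)$ and then deduce existence from Lemma~\ref{le:trivial}. In the category $\cC[A,N]_\alpha$ every morphism $\sigma_{a,b}\colon a\tens b\to a+b$ is just an element of the abelian group $N$, so specifying the data $(\sigma_{a,b})_{a,b\in A}$ amounts to giving a $2$-cochain $\sigma\in C^2(A;N)$. The unital requirement that $\sigma_{a,0}$ and $\sigma_{0,a}$ be identity maps translates exactly to $\sigma$ being normalized. Writing out the pentagon condition in $\cC[A,N]_\alpha$, using that $\id_u\tens\tau=\tau$ on morphisms in this special category and that composition is addition in $N$, one obtains the identity
\[
\sigma_{b,c}-\sigma_{a+b,c}+\sigma_{a,b+c}-\sigma_{a,b}=\alpha_{a,b,c},
\]
which is precisely $\delta\sigma=\alpha$ in $C^*(A;N)$. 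So a trivialization is nothing but a normalized $\sigma\in C^2(A;N)_{norm}$ satisfying $\delta\sigma=\alpha$.

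For existence, I would observe that the restriction map $\Hom(\cE,N)\to\Hom(C_*(A),N)$ induced by the inclusion $C_*(A)\hookrightarrow\cE$ sends the cocycle $(\alpha,\beta)$ to $\alpha$. Thus the class of $\alpha$ in $H^3(A;N)$ is the image of $[(\alpha,\beta)]\in H^3_{EM}(A;N)$ under the induced map $H^3_{EM}(A;N)\to H^3(A;N)$, which is zero by Lemma~\ref{le:trivial}. Hence $\alpha=\delta\sigma'$ for some $\sigma'\in C^2(A;N)$, and the standard fact that normalized cohomology computes ordinary group cohomology lets me adjust $\sigma'$ by the coboundary of a suitable $1$-cochain to obtain a normalized trivialization $\sigma$.

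The bijection with $Z^2(A;N)_{norm}$ then follows by a torsor argument. If $\sigma$ and $\sigma'$ are two trivializations, then $\sigma-\sigma'\in C^2(A;N)_{norm}$ and $\delta(\sigma-\sigma')=\alpha-\alpha=0$, so $\sigma-\sigma'\in Z^2(A;N)_{norm}$; conversely, adding any element of $Z^2(A;N)_{norm}$ to a trivialization yields another trivialization. Fixing one trivialization as basepoint therefore produces the claimed bijection. The main content of the proof lies in the existence step, and in particular in the fact that the extra data $\beta$ of the symmetric monoidal structure is exactly what is needed to make $[\alpha]\in H^3(A;N)$ vanish---this is the essential force of Lemma~\ref{le:trivial}, without which one would only get a trivialization when $\alpha$ itself was already a coboundary.
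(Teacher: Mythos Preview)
Your proof is correct and follows essentially the same approach as the paper: translate the trivialization data into a normalized $2$-cochain $\sigma$ with $\delta\sigma=\alpha$, invoke Lemma~\ref{le:trivial} to see that $[\alpha]=0$ in $H^3(A;N)$, and conclude via the evident torsor argument. You simply spell out a few details (the explicit pentagon identity, the normalization adjustment, and the torsor bijection) that the paper leaves implicit.
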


\begin{proof}
As we saw in (\ref{se:ANm}), a trivialization is simply an element 
$\sigma\in C^2(A;N)_{norm}$ satisfying $\delta
\sigma=\alpha$.  It is clear that if such a thing exists, the set of
all possibilities is in bijective correspondence with $Z^2(A;N)_{norm}$.  
To prove existence we need only show that $[\alpha]=0$ in $H^3(A;N)$.
But the map
$H^3_{EM}(A;N)\ra H^3(A;N)$
which sends $[(\alpha,\beta)]$ to $[\alpha]$ is the zero map by
Lemma~\ref{le:trivial}, so this finishes the proof.
\end{proof}

\subsection{The general case}
We will prove Proposition~\ref{pr:A} by reducing the construction of
an $A$-trivialization to the corresponding problem for an extended symmetric
monoidal category of type $(A,N)$.  This uses the
following lemma from 
\cite[Chapter II, Proposition 7]{H}:

\begin{lemma}
\label{le:symm-extended}
Let $(\cC,\tens,S)$ be a symmetric monoidal category in which every
object is invertible and every map is an isomorphism.  Then $\cC$ is
equivalent (as a symmetric monoidal category) to an extended symmetric
monoidal category of type $(\Pic(\cC),\Aut(S))$.
\end{lemma}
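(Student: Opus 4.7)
The plan is via standard skeletonization. Let $A = \Pic(\cC)$ and $N = \Aut(S)$. For each $a \in A$ choose a representative $X_a \in \cC$ in the isomorphism class $a$, with $X_0 = S$. Let $\cD$ be the raw category with object set $A$, no maps between distinct objects, and self-map set $N$ at each object. Define a functor $F\colon \cD \to \cC$ on objects by $F(a) = X_a$, and on self-maps by sending $u \in N$ (viewed as an endomorphism of $a$) to $u \ctens \id_{X_a} \in \Aut(X_a)$. By Proposition~\ref{pr:inv-end} this assignment is a monoid isomorphism $N \to \Aut(X_a)$ for each $a$, so $F$ is fully faithful. Since every object of $\cC$ is invertible and hence isomorphic to some $X_a$, $F$ is essentially surjective, so it is an equivalence of plain categories.

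Next, transport the symmetric monoidal structure on $\cC$ through $F$ to $\cD$. For each pair $a,b \in A$ choose an isomorphism $\sigma_{a,b} \colon X_a \tens X_b \to X_{a+b}$ in $\cC$, with $\sigma_{a,0}$ and $\sigma_{0,a}$ equal to the unital isomorphisms of $\cC$. On objects the transported bifunctor is then forced to be $a \tens^\cD b = a+b$. On self-maps, given $u,v \in N$, the transported tensor $u \tens^\cD v \in \Aut(X_{a+b})$ is by definition $\sigma_{a,b} \circ (F(u) \tens F(v)) \circ \sigma_{a,b}^{-1}$. A direct computation using Lemma~\ref{le:ctens} and Remark~\ref{re:move-around} shows $F(u) \tens F(v) = (u \ctens \id_{X_a}) \tens (v \ctens \id_{X_b})$ is conjugate to $uv \ctens \id_{X_{a+b}}$, so under the identification $\Aut(X_{a+b}) \cong N$ the transported tensor on morphisms is exactly the addition of $N$. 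Thus $\cD$ inherits the bifunctor $\tens^\cD$ of an $(A,N)$-type category.

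Finally, the transported associators and symmetry isomorphisms become self-maps $\alpha_{a,b,c} \in \Aut(X_{a+b+c})$ and $\beta_{a,b} \in \Aut(X_{a+b})$, each identified with an element of $N$ via Proposition~\ref{pr:inv-end}. The pentagon and hexagon axioms holding in $\cC$ transport literally into the cocycle conditions that cut out the set of extended symmetric monoidal structures inside $\Hom(\cE,N)^3$, and the unital normalization of the $\sigma_{a,0}$, $\sigma_{0,a}$ forces $(\alpha,\beta)$ to vanish on the degenerate subcomplex. Hence $\cD$, equipped with the transported data, coincides on the nose with $\cC[A,N]_{(\alpha,\beta)}$, and $F$ is a symmetric monoidal equivalence.

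The main obstacle is the morphism-level verification in the second paragraph, i.e.\ checking that transporting $\tens$ through the choice of representatives and the $\sigma_{a,b}$'s really produces the additive tensor on $N$ rather than some twisted version. Once the algebra of $u \ctens \id$ versus $u \tens v$ established in Section~\ref{se:inv} is applied carefully, together with the fact that $\End(S)$ is abelian (Lemma~\ref{le:aut-ab}), the remainder is formal categorical transport: pentagon and hexagon in $\cC$ pull back to their $\Hom(\cE,N)$-cocycle counterparts by construction.
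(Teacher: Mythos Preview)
Your proof is correct and follows essentially the same skeletonization-and-transport approach as the paper: choose representatives $X_a$, use Proposition~\ref{pr:inv-end} to identify all automorphism groups with $\Aut(S)$, and transport the symmetric monoidal structure across the resulting equivalence. The only notable difference is that the paper first replaces $\cC$ by a strictly associative and unital symmetric monoidal category (via MacLane's strictification theorem) and then transports using chosen isomorphisms $i_Y\colon Y\to X_{[Y]}$, whereas you work with $\cC$ directly and instead normalize the transport data by taking $\sigma_{a,0}$ and $\sigma_{0,a}$ to be the unitors; both devices serve only to ensure that the transported unit in $\cD$ is strict, and your route is arguably the more direct of the two.
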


\begin{proof}
First recall that $\cC$ is equivalent to a symmetric monoidal category
where the associativity and unital conditions are strict \cite[Theorem
XI.3.1]{M}.  So we can
just assume that $\cC$ itself has these properties.

Let $N=\Aut(S)$.  
Let $\cD$ be the category whose objects are the elements of
$\Pic(\cC)$, where there are no maps between distinct objects, and
where every endomorphism of self-maps is equal to $N$.  

For each element $a\in \Pic(\cC)$ choose a fixed object $X_a$ in $\cC$
that belongs to this isomorphism class; when $a=[S]$ choose $X_a=S$.  
Moreover, for each $Y$ in
$\cC$ choose a fixed isomorphism $i_Y\colon Y\ra X_{[Y]}$.  
Define a functor $F\colon \cC\ra \cD$ by sending each object $Y$ to
its isomorphism class in $\Pic(\cC)$; if $g\colon Y_1\ra Y_2$ is a
map, then let $F(g)=D( i_{Y_2}\circ g\circ i_{Y_1}^{-1})$.  One
readily checks that this is indeed a functor, and that each self-map
$f\colon Y\ra Y$ is sent to its $D$-invariant $D(f)\in N$.

Likewise, define a functor $G\colon \cD\ra \cC$ by sending an object
$a\in \Pic(\cC)$ to $X_a$, and sending a self-map of $a$
corresponding to $n\in N$ to the unique self-map $X_a\ra X_a$ that has
$D$-invariant equal to $n$.  It is easy to check that $F$ and $G$ give
an equivalence of categories.  Note that $FG=\id_\cD$ and that
$G([S])=S$.  

Use the equivalence $(F,G)$ to transplant the symmetric monoidal
structure from $\cC$ onto $\cD$.  For example, define the monoidal
product on $\cD$ by
\[ d_1\tens d_2 = F(Gd_1 \tens Gd_2),
\]
and likewise for the associativity, unital, and commutativity
isomorphisms.  It is routine to check that the unit in $\cD$ is
strict, because this was assumed to be the case for $\cC$ and $G([S])=S$; in
contrast, the associativity isomorphisms need not be strict.  But one
readily verifies that this gives an extended symmetric monoidal
structure on $\cD$,  which is equivalent to $(\cC,\tens,S)$ by
construction.
\end{proof}

\begin{proof}[Proof of Proposition~\ref{pr:A}]
We begin by replacing $\cC$ by the subcategory $\cC^{inv}$ of invertible objects
and isomorphisms: the question of whether or not there exists an
$A$-trivialization of $X$ is the same for $\cC$ and $\cC^{inv}$.    
Next use Lemma~\ref{le:symm-extended} to replace $\cC^{inv}$ by an
extended symmetric monoidal category of type $(A,N)$, where
$A=\Pic(\cC)$ and $N=\Aut(S)$.  Finally, use
Proposition~\ref{pr:AN1}.
This shows the existence of an $A$-trivialization $\sigma$ of $\cC$ with
respect to $X$.

Suppose now that $\sigma'$ is another $A$-trivialization
of $\cC$ with respect to $X$.  Define 
\[ \theta_{a,b}=D\Bigl ( (\sigma_{a,b}')^{-1}\circ \sigma_{a,b} \Bigr )\in
\Aut(S)
\]
for each $a,b\in A$.  Note the resulting formula
$\sigma'_{a,b}=\sigma_{a,b}\ctens \theta_{a,b}$.  
Condition (1) in the definition of
$A$-trivialization shows that $\theta_{a,b}=\id_S$ if either $a$ or $b$
is zero.  Take the pentagonal diagram in condition (2) for $\sigma$
and let $C$ denote a composition going around the pentagon; let $C'$
denote the corresponding composition for $\sigma'$.  Note that
$C=C'=\id$ by commutativity of these diagrams.  But
if we replace each $\sigma'_{a,b}$ appearing in $C'$ 
with $\sigma_{a,b}\ctens \theta_{a,b}$,
then all of the $\theta$'s can be moved outside the composition by
Remark~\ref{re:move-around}.  This shows that $C'=C\ctens (\delta\theta)(a,b,c)$.  Since
$C=C'=\id$ we get that $\delta
\theta(a,b,c)=\id_S$ for every $a,b,c\in A$.  So $\theta\in
Z^2(A;\Aut(S))_{norm}$, and moreover it is easy to see that this gives a bijection
between $A$-trivializations and elements of $Z^2(A;\Aut(S))_{norm}$.  

Finally, we have seen how the trivializations $\sigma$ and $\sigma'$
each give rise to a ring structure on $\pi_*^A(S)$; write these as
$\pi_*^A(S)_\sigma$ and $\pi_*^A(S)_{\sigma'}$.  
Recall from the beginning of this
section that a standard isomorphism between these rings
depends on a fixed
map of sets $u\colon A\ra \Aut(S)$.  Let $F_u\colon \pi_*^A(S)\ra
\pi_*^A(S)$ be the map of $A$-graded abelian groups which sends
$f\colon X_a\ra S$ to $f\cdot u(a)$.  It is routine to check that
$F_u$ gives a ring isomorphism $\pi_*^A(S)_\sigma \ra
\pi_*^A(S)_{\sigma'}$ if and only if $\delta u=\theta$.  This
completes the proof.
\end{proof}


\appendix
\section{A short motivic application}

Here we give the proof of Proposition~\ref{pr:motivic-compare}.
We concentrate on the basic idea, ignoring technical details about the
foundations.

\medskip
Let $\Ho(\Spe)$ and $\Ho(\MotSp)$ denote the stable homotopy category
and the motivic stable homotopy category over $\C$, respectively.
These both have symmetric monoidal structures, with the units written
$S^0$ and $S^{0,0}$, respectively.  There is a
realization functor $\psi\colon \Ho(\MotSp)\ra \Ho(\Spe)$ that is
strong monoidal.  Let $X_1=S^{1,0}$ and $X_2=S^{1,1}$ be the standard
motivic spheres, and write $S^1$ for the classical suspension spectrum
of the circle.  Choose a model for $S^{-1}$ in $\Ho(\Spe)$ and an
isomorphism $\alpha\colon S^0 \ra S^{-1}\Smash S^1$.  Choose inverses
$X_1^*$ and $X_2^*$, and let us assume for simplicity that
$\psi(X_1^*)=S^{-1}$ and $\psi(X_2^*)=S^{-1}$ (equalities instead of
merely isomorphisms).  A little thought shows that one can choose isomorphisms
$\alpha_1\colon S^{0,0}\ra X_1^*\Smash X_1$ and 
$\alpha_2\colon S^{0,0}\ra X_2^*\Smash X_2$ that map
to $\alpha$ under $\psi$.  

Below we will write $Z=S^1$ to avoid having to write double exponents
like $(S^1)^a$.

\begin{proof}[Proof of Proposition~\ref{pr:motivic-compare}]
Let $f\colon X_1^a\Smash X_2^b\ra S$ and $g\colon X_1^r\Smash X_2^s\ra
S$.  Then $f\cdot g$ is the composite
\[ X_1^{a+r}\Smash X_2^{b+s}\llra{\phi} X_1^a\Smash X_2^b \Smash X_1^r
\Smash X_2^s \llra{f\Smash g} S\Smash S=S.
\]
The canonical isomorphism $\phi$ commutes the $X_2^b$ past the $X_1^r$
and then simplifies the resulting monomial by using associativity and
the $\alpha$ and $\hat{\alpha}$ maps (but without any more
commutations).
If we apply $\psi$ to this composite then we get the analagous
composite
\[ \xymatrixcolsep{2.9pc}\xymatrix{
Z^{a+r+b+s} \ar[r] & Z^a\Smash Z^b \Smash Z^r\Smash Z^s
\ar[r]^-{\psi(f)\Smash \psi(g)} & S\Smash S \ar@{=}[r] & S.
}
\]
Note that the first map in the composite is not a canonical map
anymore, and so we have dropped the label $\phi$.  Rather, this map
commutes the $Z^r$ past the $Z^b$.  If we were to compute
$\psi(f)\cdot \psi(g)$ in $\pi_*(S)$, however, we would get the
composite
\[ \xymatrixcolsep{2.9pc}\xymatrix{
Z^{a+r+b+s} \ar[r]^-\phi & Z^a\Smash Z^b \Smash Z^r\Smash Z^s
\ar[r]^-{\psi(f)\Smash \psi(g)} & S\Smash S \ar@{=}[r] & S.
}
\]
So we obtain a commutative diagram
\[ \xymatrixcolsep{2.9pc}\xymatrix{
Z^{a+r+b+s} \ar[r]^-{\psi(f\Smash g)}\ar[d]_{T} & S \\
Z^{a+r+b+s}\ar[ur]_{\psi(f)\Smash \psi(g)}
}
\]
where $T$ is the composite
\[ \xymatrixcolsep{2.4pc}\xymatrix{
Z^{a+r+b+s}\ar[r]^-\phi & Z^a\Smash Z^r\Smash Z^b\Smash Z^s
\ar[r]^{1\Smash t_{r,b}\Smash 1}  & Z^a\Smash
Z^b\Smash Z^r\Smash Z^s\ar[r]^-\phi & Z^{a+r+b+s}.
}
\]
Using Lemma~\ref{le:dumb} the triangle gives
$\psi(f\Smash g)=\psi(f)\Smash \psi(g) \Smash D(T)$.  We next compute
that
\[ D(T)=[T]_r=[\phi]_r\circ [1\Smash t_{r,b}\Smash 1]_r\circ
[\phi]_r=\id\circ [t_{r,b}]_r\circ
\id=\tau_{r,b}=\tau_1^{rb}=(-1)^{rb}
\]
(the first, second, and third equalities are by
Proposition~\ref{pr:self-map}, and the fifth is 
Proposition~\ref{pr:tau-ab}).
This yields the
desired result; one only needs to remember that the motivic bigrading
is set up so that $f\in \pi_{a+b,b}(S)$ and $g\in \pi_{r+s,s}(S)$, and
then one recovers the formula from the statement of the proposition.
\end{proof}

\begin{remark}
Note that in the setup of motivic homotopy groups we chose $X_1=S^{1,0}$ and
$X_2=S^{1,1}$.  The sign in Proposition~\ref{pr:motivic-compare}
actually depends on this choice.   We leave it as an exercise to check
that if we had chosen $X_1=S^{1,1}$ and
$X_2=S^{1,0}$ then the sign rule would be $\psi(fg)=(-1)^{(a-b)s}
\psi(f)\psi(g)$ for $f\in \pi_{a,b}(S)$ and $g\in\pi_{r,s}(S)$.  
This shows how sensitive sign formulas are to the choices in the 
bookkeeping.
\end{remark}


\bibliographystyle{amsalpha}

\end{document}